\newtheorem{mth}{Theorem}
\newtheorem{thm}{Theorem}[section]
\newtheorem{cor}[thm]{Corollary}
\newtheorem{lem}[thm]{Lemma}
\newtheorem*{propA1}{Proposition A.1}
\newtheorem{prop}[thm]{Proposition}
\theoremstyle{definition}
\theoremstyle{remark}
\newtheorem{rem}[thm]{Remark}
\newtheorem*{ex}{Example}
\numberwithin{equation}{section}
\newcommand{\1}{{\mathbf{1}}}
\newcommand{\calA}{{\mathcal A}}
\newcommand{\bo}{{\mathcal L}}
\newcommand{\calH}{{\mathcal H}}
\newcommand{\calI}{{\mathcal I}}
\newcommand{\calJ}{{\mathcal J}}
\newcommand{\calK}{{\mathcal K}}
\newcommand{\calP}{{\mathcal P}}
\newcommand{\calM}{{\mathcal M}}
\newcommand{\calV}{{\mathcal V}}
\newcommand{\calF}{{\mathcal F}}
\newcommand{\calS}{{\mathcal S}}
\newcommand{\calW}{{\mathcal W}}
\newcommand{\typeH}{\mathbf{H}}
\newcommand{\Kpr}{$K$-property\,}
\newcommand{\R}{{\mathbf R}}
\newcommand{\C}{{\mathbf C}}
\newcommand{\im}{\operatorname{Im}}
\newcommand{\vro}{{\varrho}}
\newcommand{\fii}{{\varphi}}
\newcommand{\kfun}{{k}}
\newcommand{\esssup}{\operatorname{\text{\rm ess\,sup}\,}}
\newcommand{\essinf}{\operatorname{\text{\rm ess\,inf}\,}}
\newcommand{\diag}{\operatorname{diag}}
\newcommand{\re}{\operatorname{Re}}
\newcommand{\spa}{\operatorname{span}}
\newcommand{\eps}{{\varepsilon}}
\newcommand{\QI}{\operatorname{QI}}
\newcommand{\QM}{\operatorname{QM}}
\begin{document}

\title{Interpolation between Hilbert spaces}

\author{Yacin Ameur}

\address{Department of Mathematics\\Faculty of Science\\Lund University\\P.O. Box 118\\221 00 Lund\\Sweden}
\email{Yacin.Ameur@math.lu.se}

\begin{abstract}
This note comprises a synthesis of certain results in the theory of exact interpolation
between Hilbert spaces. In particular, we examine various characterizations of
interpolation spaces and their relations to a number of results in operator-theory and in function-theory.

%[2] and of all quadratic interpolation spaces [13], and we give
%connections to other results in the area.

\end{abstract}

%\begin{abstract}
%The theory of quadratic interpolation of Hilbert spaces was initiated
%in the 1960's by
%Foia\c{s} and Lions \cite{FL} and further
%developed by Donoghue in \cite{D1}. The arguments of those authors depend,
%to different extents,
%on L\"owner's earlier work on monotone matrix functions \cite{L}.

%In
%\cite{A2} it was shown that Hilbert couples are
%"exact Calderón couples'', and we obtained Donoghue's theorem as a consequence of this.
%The proof does not use L\"owner's theory; instead, it leads to a parallel
%theory for "interpolation functions'', which appear naturally as
%solutions to certain interpolation problems for positive Pick functions.
%\end{abstract}

%\subjclass[2010]{Primary: 46B70,  47A57. Secondary: 30E05, 46B04, 46C05, 46C15, 47A30, 47A56, 47A63, 47S30.}

%\keywords{Interpolation, Hilbert and Banach space, $K$-functional,
%Calderón couple, geometric interpolation, Pick function, matrix monotone function, L\"owner's
%and Donoghue's theorems.}

\maketitle

%We discuss various characterizations of interpolation spaces with respect to a pair of Hilbert spaces, and we relate this topic to some other branches of operator theory. This note is intended to display, in a relatively concise and accessible way, some of the main results, constructions, and open problems in the theory, hoping to facilitate for future usage.
%We hope that our note will contribute to the accessibility of the subject matters.
% contribute to the accessibility of this theory.

\section{Interpolation theoretic notions}

\subsection{Interpolation norms} \label{internorm} When $X$, $Y$ are normed spaces, we use the symbol $\bo(X;Y)$ to denote
the totality of bounded linear maps $T:X\to Y$ with the operator norm
$$\left\|\,T\,\right\|_{\,\bo(X;Y)}=\sup\left\{\left\|\,Tx\,\right\|_Y;\, \left\|\,x\,\right\|_X\le 1\right\}.$$
When $X=Y$ we simply write $\bo(X)$.

Consider a pair of Hilbert spaces $\overline{\calH}=\left(\calH_0,\calH_1\right)$ which is \textit{regular} in
the sense that $\calH_0\cap \calH_1$ is dense in $\calH_0$ as well as in
$\calH_1$. We assume that the pair is \textit{compatible}, i.e., both $\calH_i$ are embedded in some common Hausdorff topological vector space
$\calM$.

We define the \textit{$K$-functional} (\footnote{More precisely, this is the \textit{quadratic version} of the classical Peetre $K$-functional.}) for the couple $\overline{\calH}$ by
$$K\left(t,x\right)=K\left(t,x\,;\,\overline{\calH}\,\right)=\inf_{x=x_0+x_1}
\{\,\left\|\,x_0\,\right\|_0^{\,2}+t\left\|\,x_1\,
\right\|_1^{\,2}\,\},\qquad t>0,\, x\in\calM.$$
The \textit{sum} of the spaces $\calH_0$ and $\calH_1$ is defined to be the space  consisting of all $x\in\calM$ such that the quantity
$\left\|\,x\,\right\|_\Sigma^{\,2}:=K\left(1,x\right)$ is finite; we denote this space by the symbols
$$\Sigma=\Sigma(\,\overline{\calH}\,)=\calH_0+\calH_1.$$
We shall soon see that $\Sigma$ is a Hilbert
space (see Lemma \ref{kcalc}).
The \textit{intersection} $$\Delta=\Delta(\,\overline{\calH}\,)=\calH_0\cap\calH_1$$
is a Hilbert space under the norm $\left\|\,x\,\right\|_\Delta^{\,2}:=\left\|\,x\,\right\|_0^{\,2}+
\left\|\,x\,\right\|_1^{\, 2}$.

A map $T:\Sigma(\,\overline{\calH}\,)\to\Sigma(\,\overline{\calK}\,)$ is called a \textit{couple map} from
$\overline{\calH}$ to $\overline{\calK}$ if the
restriction of $T$ to $\calH_i$ maps $\calH_i$ boundedly into $\calK_i$ for $i=0,1$.
We use the notations $T\in \bo(\,\overline{\calH}\,;\,\overline{\calK}\,)$ or
$T:\overline{\calH}\to\overline{\calK}$ to denote that $T$ is a couple map. It is easy to check
 that $\bo(\,\overline{\calH}\,;\,\overline{\calK}\,)$
is a Banach space,  when equipped with the norm
\begin{equation}\label{equi}\left\|\, T\,\right\|_{\,\bo(\,\overline{\calH}\,;\,\overline{\calK}\,)}:=
\max_{j=0,1}\{\,\left\|\, T\,\right\|_{\,\bo(\,\calH_j\,;\calK_j\,)}\,\}.
\end{equation}
If $\left\|\,T\,\right\|_{\,\bo(\,\overline{\calH}\,;\,\overline{\calK}\,)}\le 1$ we speak
of a \textit{contraction} from $\overline{\calH}$ to $\overline{\calK}$.

A Banach space $X$ such that $\Delta\subset X\subset \Sigma$ (continuous inclusions) is called
\textit{intermediate} with respect to the pair $\overline{\calH}$.

Let $X$, $Y$ be intermediate spaces with respect to couples $\overline{\calH}$, $\overline{\calK}$, respectively. We say that
$X$, $Y$ are (relative) \textit{interpolation spaces} if there
is a constant $C$ such that $T:\overline{\calH}\to\overline{\calK}$ implies that $T:X\to Y$ and
\begin{equation}\label{C-int}\left\|\,T\,\right\|_{\,\bo(X;Y)}\le C\left\|\,T\,\right\|_{\,\bo(\overline{\calH};\overline{\calK})}.\end{equation}
In the case when $C=1$ we speak about \textit{exact interpolation}.
When $\overline{\calH}=\overline{\calK}$ and $X=Y$ we simply say that $X$ is an (exact) interpolation
space with respect to $\overline{\calH}$.

\smallskip

Let
$H$ be a suitable function of two positive variables and $X$, $Y$ spaces intermediate to the couples $\overline{\calH}$, $\overline{\calK}$, respectively.
We say that the spaces $X$, $Y$ are of \textit{type $H$} (relative to
$\overline{\calH}$, $\overline{\calK}$)
if for any
positive numbers $M_0$, $M_1$ we have
\begin{equation}\label{Htype}\left\|\, T\,\right\|_{\,\bo\left(\calH_i;\calK_i\right)}\le M_i,\quad i=0,1
\qquad \text{implies}\qquad \left\|\,T\,\right\|_{\,\bo(X;Y)}\le H(M_0,M_1).
\end{equation}
The case $H(x,y)=\max\{x,y\}$ corresponds to exact interpolation, while
$H(x,y)=x^{\,1-\theta}y^{\,\theta}$ corresponds to the convexity estimate
\begin{equation}\label{expo}\left\|\,T\,\right\|_{\,\bo(X;Y)}\le
\left\|\,T\,\right\|_{\,\bo(\calH_0;\calK_0)}^{\,\,1-\theta}\,\left\|\,T\,\right\|_{\,\bo(\calH_1;\calK_1)}^{\,\,\theta}.
\end{equation}
In the situation of \eqref{expo}, one says that the interpolation spaces $X$, $Y$ are of \textit{exponent $\theta$} with respect to the
pairs $\overline{\calH}$, $\overline{\calK}$.

\subsection{$K$-spaces} \label{mex} Given a regular Hilbert couple $\overline{\calH}$ and a positive
Radon measure $\vro$ on the compactified half-line $[0,\infty]$ we define an intermediate
quadratic norm by
\begin{equation}\label{*def}\left\|\,x\,\right\|_*^{\, 2}=\left\|\,x\,\right\|_\vro^{\, 2}=\int_{[0,\infty]}\left(1+t^{-1}\right)
K\left(t,x;\,\overline{\calH}\,\right)\, d\vro(t).\end{equation}
Here the integrand $k(t)=\left(1+t^{-1}\right)K(t,x)$ is defined at the points $0$ and $\infty$ by
$k(0)=\left\|\,x\,\right\|_1^{\,2}$ and $k(\infty)=\left\|\,x\,\right\|_0^{\,2}$; we shall write $\calH_*$ or $\calH_\vro$ for the Hilbert space defined by the norm \eqref{*def}.

Let $T\in \bo\left(\,\overline{\calH};\overline{\calK}\,\right)$ and suppose that
$\left\|\,T\,\right\|_{\,\bo\left(\calH_i;\calK_i\right)}\le M_i$; then
\begin{equation}\label{contr}K\left( t,Tx;\,\overline{\calK}\,\right)\le M_0^{\,2}\,
K\left( M_1^{\,2}t/M_0^{\,2},\, x;\,\overline{\calH}\,\right),\quad x\in\Sigma.\end{equation}
In particular, $M_i\le 1$ for $i=0,1$ implies
$\left\|\, Tx\,\right\|_{\,\calK_\vro}\le \left\|\,x\,\right\|_{\,\calH_\vro}$ for all $x\in\calH_\vro$.
It follows that the spaces $\calH_\vro$, $\calK_\vro$ are exact interpolation spaces
with respect to $\overline{\calH}$, $\overline{\calK}$.

\subsubsection*{Geometric interpolation}
When the measure $\vro$ is given by
$$d\vro(t)=c_\theta \frac {t^{-\theta}}{1+t}\, dt,\qquad c_\theta=\frac \pi {\sin\theta\pi},\quad
0<\theta<1,$$
we denote the norm \eqref{*def} by
\begin{equation}\label{mc}\left\|\,x\,\right\|_\theta^{\,2}:=c_\theta
\int_0^\infty t^{-\theta}K\left( t,\, x\right)\, \frac {dt} t.\end{equation}
The corresponding space $\calH_\theta$ is
easily seen to be of exponent $\theta$ with respect to $\overline{\calH}$.
In \textsection \ref{quadro}, we will recognize $\calH_\theta$ as the geometric interpolation space
 which has been studied independently by several authors, see \cite{Mc,U,LM}.

\subsection{Pick functions} \label{opint} Let $\overline{\calH}$ be a regular Hilbert couple.
The squared norm $\left\|\,x\,\right\|_1^{\,2}$ is a densely defined quadratic form
in $\calH_0$, which we represent as
$$\left\|\,x\,\right\|_1^{\,2}=\left\langle\,Ax\,,\,x\,\right\rangle_0=\|\, A^{\,1/2}x\,\|_0^{\, 2}$$
where $A$ is a densely defined, positive, injective (perhaps unbounded) operator in $\calH_0$. The domain
of the positive square-root $A^{1/2}$ is $\Delta$.

\begin{lem} \label{kcalc} We have
in terms of the functional calculus in
$\calH_0$
\begin{equation}\label{int}K\left(t,x\,\right)=\left\langle\,\frac {tA} {1+tA}\, x\, ,\, x\,\right\rangle_0,\quad t>0.\end{equation}
\end{lem}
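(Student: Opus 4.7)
The plan is to minimize the quantity in the definition of $K(t,x)$ explicitly using the functional calculus for $A$. Parametrizing admissible decompositions $x=x_0+x_1$ by the single variable $x_0$ (so that $x_1=x-x_0$), the functional to minimize is
\[ F(x_0):=\|x_0\|_0^{\,2}+t\bigl\langle A(x-x_0),\,x-x_0\bigr\rangle_0, \]
a strictly convex quadratic form in $x_0$ with Hessian $2(I+tA)$. Its Euler--Lagrange equation $(1+tA)x_0=tAx$ suggests the candidates
\[ \hat{x}_0:=\frac{tA}{1+tA}\,x,\qquad \hat{x}_1:=\frac{1}{1+tA}\,x, \]
both well-defined via the Borel functional calculus, since the scalar functions $\lambda\mapsto t\lambda/(1+t\lambda)$ and $\lambda\mapsto 1/(1+t\lambda)$ are bounded on $[0,\infty)$.

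A direct computation relying on the identity
\[ \left(\frac{t\lambda}{1+t\lambda}\right)^{\!2}+t\lambda\left(\frac{1}{1+t\lambda}\right)^{\!2}=\frac{t\lambda(1+t\lambda)}{(1+t\lambda)^{2}}=\frac{t\lambda}{1+t\lambda} \]
then yields $\|\hat{x}_0\|_0^{\,2}+t\|\hat{x}_1\|_1^{\,2}=\bigl\langle\tfrac{tA}{1+tA}x,x\bigr\rangle_0$. The matching lower bound comes from completing the square: for any admissible $x_0$,
\[ F(x_0)-F(\hat{x}_0)=\bigl\langle(1+tA)(x_0-\hat{x}_0),\,x_0-\hat{x}_0\bigr\rangle_0\ge 0. \]

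The main obstacle is that $A$ is unbounded and $x\in\Sigma$ need not lie in $\operatorname{dom}(A)$, so ``$tAx$'' in the Euler--Lagrange equation is only formal. The cleanest remedy, which I would adopt, is to pass to a spectral model: by the spectral theorem, realize $\calH_0$ as $L^2(X,d\mu)$ with $A$ acting as multiplication by a non-negative measurable function $\lambda(\cdot)$. Then $\|f\|_1^{\,2}=\int\lambda|f|^{2}\,d\mu$, and the $K$-functional decouples into a pointwise minimization,
\[ K(t,f)=\int_X\inf_{f(x)=a+b}\left\{|a|^{2}+t\lambda(x)|b|^{2}\right\}d\mu(x)=\int_X\frac{t\lambda(x)}{1+t\lambda(x)}|f(x)|^{2}\,d\mu(x), \]
which is exactly $\bigl\langle\tfrac{tA}{1+tA}f,\,f\bigr\rangle_0$. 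This model makes both steps rigorous for every $x\in\Sigma$, and incidentally identifies $\Sigma$ as the Hilbert space with norm $\|x\|_\Sigma^{\,2}=\bigl\langle\tfrac{A}{1+A}x,x\bigr\rangle_0$, confirming the remark after the $K$-functional's definition.
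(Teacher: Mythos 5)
Your proof is correct. The paper's approach is variational: fix $x\in\Delta$, note the unique optimal decomposition $x=x_{0,t}+x_{1,t}$ exists by convexity, differentiate $\epsilon\mapsto\|x_{0,t}+\epsilon y\|_0^2+t\|x_{1,t}-\epsilon y\|_1^2$ at $\epsilon=0$ for $y\in\Delta$ to get $A^{-1/2}x_{0,t}=tA^{1/2}x_{1,t}$, invoke regularity to identify $x_{0,t}=\frac{tA}{1+tA}x$ and $x_{1,t}=\frac{1}{1+tA}x$, and substitute; the identity is then extended off $\Delta$ by a remark about bounded extensions. Your route finds the same minimizer but handles the unboundedness differently: rather than restricting to $x\in\Delta$ and then extending, you pass to the multiplication-operator model $\calH_0\cong L^2(X,d\mu)$ of the spectral theorem, where the $K$-functional decouples into a pointwise scalar minimization that is rigorous for every $x\in\Sigma$ outright. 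This is genuinely cleaner on the domain issue and also gives, as you note, the identification of $\Sigma$ as a Hilbert space for free. Two small things are left implicit in your version and are worth a sentence each in a careful write-up: (i) the exchange of $\inf$ and $\int$ is justified because the pointwise minimizer $\hat a(x)=\frac{t\lambda(x)}{1+t\lambda(x)}f(x)$ is explicit and measurable, so the pointwise infimum and the infimum over decompositions within $\calH_0\times\calH_1$ coincide; and (ii) one should check that $\hat x_0\in\calH_0$ and $\hat x_1\in\calH_1$ for arbitrary $f\in\Sigma$, which follows from the bounds $\frac{(t\lambda)^2}{(1+t\lambda)^2}\le\min\{1,t\lambda\}$ and $\frac{\lambda}{(1+t\lambda)^2}\le\min\{\lambda,\,t^{-1}\}$. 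Your completing-the-square lower bound and the algebraic identity verifying the value at the minimizer are both correct.
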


In the formula \eqref{int}, we have identified the bounded operator $\frac {tA}{1+tA}$ with its extension to $\calH_0$.
%Strictly speaking, it holds only for $x$ in the dense domain of $A$.

\begin{proof} Fix $x\in \Delta$. By a straightforward convexity argument, there is a unique
decomposition $x=x_{0,t}+x_{1,t}$ which is \textit{optimal} in the sense that
\begin{equation}\label{opt}K(t,x)=\left\|\,x_{0,t}\,\right\|_0^{\,2}+t\left\|\,x_{1,t}\,\right\|_1^{\,2}.
\end{equation}
It follows that
$x_{i,t}\in\Delta$ for $i=0,1$. Moreover, for all $y\in\Delta$ we have
$$\frac d {d\epsilon}\{\,\left\|\,x_{0,t}+\epsilon y\,\right\|_0^{\,2}+t\left\|\, x_{1,t}-\epsilon y\,\right\|_1^{\,2}\,\}|_{\epsilon=0}=0,$$
i.e.,
$$\langle\,A^{-1/2}x_{0,t}-tA^{1/2}x_{1,t}\, ,\, A^{1/2}y\,\rangle_0=0,\qquad y\in\Delta.$$
By regularity, we conclude that $A^{-1/2}x_{0,t}=tA^{1/2}x_{1,t}$, whence
\begin{equation}\label{theserel}x_{0,t}=\frac {tA}{1+tA}\, x\qquad \text{and}\qquad x_{1,t}=\frac 1 {1+tA}\, x.
\end{equation}
(Note that the operators in \eqref{theserel} extend to bounded operators on $\calH_0$.)
Inserting the relations \eqref{theserel} into \eqref{opt}, one finishes the proof of the lemma.
\end{proof}

Now fix a positive Radon measure $\vro$ on $[0,\infty]$. The norm in
the space $\calH_\vro$ (see \eqref{*def}) can be written
\begin{equation}\label{fvro} \left\|\,x\,\right\|_\vro^{\,2}=\left\langle\, h(A)x\, ,\, x\,\right\rangle_0,\end{equation}
where
\begin{equation}\label{Pick}h(\lambda)=\int_{[0,\infty]}\frac {(1+t)\lambda} {1+t\lambda}\, d\vro(t).\end{equation}
The class of functions representable in this form for some positive Radon measure $\vro$ is the
class $P'$ of \textit{Pick functions, positive and regular on $\R_+$}.

Notice that
for the definition \eqref{fvro} to make sense, we just need $h$ to be defined on
$\sigma(A)\setminus\{0\}$, where $\sigma(A)$ is the spectrum of $A$.
(The value $h(0)$ is irrelevant since
$A$ is injective).

%It is elementary to check that for the space
A calculus exercise shows that for the space $\calH_\theta$ (see \eqref{mc}) we have
\begin{equation}\left\|\, x\,\right\|_\theta^{\, 2}=\langle\, A^\theta x\, ,\, x\,\rangle_0.\end{equation}

\subsection{Quadratic interpolation norms} \label{donolemm} Let $\calH_*$ be any \textit{quadratic} intermediate space relative to
$\overline{\calH}$. We write
$$\left\|\,x\,\right\|_*^{\,2}=\left\langle\, Bx\, ,\, x\,\right\rangle_0$$ where
$B$ is a positive injective operator in $\calH_0$ (the domain of $B^{1/2}$ is $\Delta$).

For a map $T\in \bo(\overline{\calH})$ we shall often use the simplified notations
$$\|\,T\,\|=\|\,T\,\|_{\,\bo\left(\calH_0\right)}\quad ,\quad
\|\,T\,\|_A=\|\, T\,\|_{\,\bo\left(\calH_1\right)}\quad ,\quad \|\,T\,\|_B=\|\,T\,\|_{\,\bo\left(\calH_*\right)}.$$
The reader can check the identities
$$\left\|\,T\,\right\|_A=\|\,A^{1/2}TA^{-1/2}\,\|\quad \text{and}\quad
\left\|\,T\,\right\|_B=\|\,B^{1/2}TB^{-1/2}\,\|.$$
We shall refer to the following lemma as \textit{Donoghue's lemma}, cf. \cite[Lemma 1]{D2}.

\begin{lem} \label{donogh} If $\calH_*$ is exact interpolation with respect to $\overline{\calH}$, then
$B$ commutes with every projection which commutes with $A$ and
$B=h(A)$ where $h$ is some positive Borel function on $\sigma(A)$.
\end{lem}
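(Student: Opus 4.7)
My plan is to prove the lemma in three stages. First I would show that every orthogonal projection $P$ on $\calH_0$ which commutes with $A$ is in fact a contraction on the couple $\overline{\calH}$, and hence on $\calH_*$ by the exact interpolation hypothesis. Second I would invoke the Hilbert-space principle that a contractive idempotent is automatically an orthogonal projection, which translates contractivity on $\calH_*$ into the commutation $BP=PB$. Third, I would use this commutation together with the spectral theorem for $A$ to realise $B$ as a Borel function of $A$.

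For the first stage, $\|P\|\le 1$ is immediate, and the identity $\|P\|_A=\|A^{1/2}PA^{-1/2}\|$ recorded just before the lemma, combined with the functional-calculus fact that $P$ commutes with $A^{\pm 1/2}$ whenever it commutes with $A$, gives $\|P\|_A=\|P\|\le 1$. Thus $P\in\bo\left(\overline{\calH}\right)$ with norm at most one, and exact interpolation yields $\|P\|_B\le 1$. Applying the same to $I-P$ gives $\|I-P\|_B\le 1$ as well. Next, I would appeal to the standard fact that any bounded idempotent $P$ on a Hilbert space with $\|P\|\le 1$ must be orthogonal: if $y\in\im P$ and $x\in\ker P$, then $\|y\|_*=\|P(y+tx)\|_*\le\|y+tx\|_*$ for every scalar $t$, forcing $\langle y,x\rangle_*=0$. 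Thus $P$ is self-adjoint with respect to $\langle\cdot,\cdot\rangle_*$, which is exactly $BP=PB$ on $\Delta$. This establishes the first half of the lemma.

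For the representation $B=h(A)$, I would apply what was just established to every spectral projection $E(S)=\chi_S(A)$ of $A$: these are orthogonal projections commuting with $A$, so $B$ commutes with each $E(S)$. Norm-approximating any self-adjoint element of the commutant $\{A\}'$ by linear combinations of its own spectral projections (which again lie in $\{A\}'$) shows that $B$ in fact commutes with all of $\{A\}'$, so $B$ is affiliated with the bicommutant $\{A\}''$, which by the spectral theorem is the abelian von Neumann algebra of bounded Borel functions of $A$. Passing to a spectral-multiplicity representation $\calH_0\cong L^2(\sigma(A),m;\calK)$ in which $A$ is multiplication by $\lambda$ and $\{A\}''$ consists of multiplications by scalar Borel functions, $B$ must itself be multiplication by a nonnegative Borel function $h(\lambda)$, i.e.\ $B=h(A)$.

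The main obstacle is the bookkeeping around the possibly unbounded operators $A$ and $B$: the relation ``$P$ commutes with $A$'' must be read via the resolvent or the spectral projections, the identity $\|P\|_A=\|A^{1/2}PA^{-1/2}\|=\|P\|$ requires care on the dense domain $\Delta$, and the final passage from affiliation with the abelian algebra $\{A\}''$ to the concrete form $h(A)$ rests on the multiplication-operator form of the spectral theorem applied to a positive self-adjoint operator with possibly infinite multiplicity.
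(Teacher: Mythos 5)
Your proposal is correct and follows essentially the same route as the paper's proof: showing that projections commuting with $A$ are couple contractions, transferring contractivity to $\calH_*$ by exactness, converting the contractive idempotent into an orthogonal projection that commutes with $B$, and then invoking the bicommutant theorem. The paper compresses the first step into the equivalence $\left\|\,E\,\right\|_A\le 1 \Leftrightarrow EAE\le A \Leftrightarrow EA=AE$ and leaves the norm-approximation step implicit, but the underlying argument is the same.
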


\begin{proof} For an orthogonal projection $E$ on $\calH_0$, the condition
$\left\|\,E\,\right\|_A\le 1$ is equivalent to that $EAE\le A$, i.e., that
$E$ commutes with $A$. The hypothesis that $\calH_*$ be exact interpolation
thus implies that every spectral projection of $A$ commutes with $B$.
It now follows from von Neumann's bicommutator theorem that $B=h(A)$ for some
positive Borel function $h$ on $\sigma(A)$.
\end{proof}

In view of the lemma, the characterization of the exact quadratic interpolation norms
of a given type $H$ reduces to the characterization of functions
$h:\sigma(A)\to\mathbf{R}_+$ such that for all $T\in \bo\left(\,\overline{\calH}\,\right)$
\begin{equation}\left\|\,T\,\right\|\le M_0\quad \text{and}\quad \left\|\,T\,\right\|_A\le M_1
\quad \Rightarrow\quad \left\|\, T\,\right\|_{h(A)}\le H(M_0,M_1),\end{equation}
or alternatively,
\begin{equation}{T}^*T\le M_0^{\,2}\quad \text{and}\quad {T}^*AT\le M_1^{\,2}\,A\quad \Rightarrow\quad
{T}^*h(A)T\le H(M_0,M_1)^{\,2}\, h(A).\end{equation}
The set of functions $h:\sigma(A)\to\mathbf{R}_+$ satisfying these equivalent conditions forms a convex cone $C_{H,A}$; its elements are called \textit{interpolation functions of type $H$ relative to $A$}.
In the case when $H(x,y)=\max\{x,y\}$ we simply write $C_A$ for $C_{H,A}$ and speak of \textit{exact interpolation
functions relative to $A$}.

\subsection{Exact Calderón pairs and the $K$-property} \label{capa} Given two intermediate normed spaces $Y$, $X$ relative to
$\overline{\calH}$, $\overline{\calK}$, we say that they are (relatively) \textit{exact
$K$-monotonic} if the conditions
$$x^0\in X\quad \text{and}\quad K\left( t,y^0;\,\overline{\calH}\,\right)\le K\left(t,x^0;\,\overline{\calK}\,\right),\qquad t>0$$
imply that
$$y^0\in Y\quad\text{and}\quad \|\,y^0\,\|_Y\le\|\,x^0\,\|_X.$$

It is easy to see that \textit{exact $K$-monotonicity implies exact interpolation}.

\begin{proof}[Proof of this.] If $\left\|\,T\,\right\|_{\,\bo(\overline{\calK};\overline{\calH})}\le 1$
then $\forall x,t$: $K\left(t,Tx;\,\overline{\calH}\,\right)\le K\left(t,x;\,\overline{\calK}\,\right)$ whence
$\left\|\,Tx\,\right\|_Y\le \left\|\,x\,\right\|_X$, by exact $K$-monotonicity. Hence  $\left\|\,T\,\right\|_{\,\bo(X;Y)}\le 1$.
\end{proof}

Two pairs $\overline{\calH}$, $\overline{\calK}$ are called
\textit{exact relative Calderón pairs}
if any two exact interpolation (Banach-) spaces $Y$, $X$ are exact $K$-monotonic.
Thus, with respect to to exact Calderón pairs, exact interpolation is equivalent to exact $K$-monotonicity.
The term "Calderón pair" was coined after thorough investigation of A. P. Calderón's study of the pair $\left(L_1,L_\infty\right)$, see \cite{C1} and \cite{Cw}.

In our present discussion, it is not convenient to work directly with the definition of exact Calderón pairs. Instead, we
shall use the following, closely related notion.

We say that a pair of couples $\overline{\calH}$, $\overline{\calK}$ has the \textit{relative (exact) \Kpr}
if for all $x^0\in\Sigma(\,\overline{\calK}\,)$ and $y^0\in\Sigma(\,\overline{\calH}\,)$ such that
\begin{equation}\label{below}K\left(t,y^0;\,\overline{\calH}\,\right)\le K\left(t,x^0;\,\overline{\calK}\,\right),\quad t>0,\end{equation} there exists a map $T\in \bo(\,\overline{\calK};\overline{\calH}\,)$
such that $Tx^0=y^0$ and $\left\|\,T\,\right\|_{\,\bo(\overline{\calK};\overline{\calH})}\le 1$.

\begin{lem} \label{caldp} If $\overline{\calH}$, $\overline{\calK}$ have the relative \Kpr, then they are
exact relative Calderón pairs.
\end{lem}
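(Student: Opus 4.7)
The statement to be proven should follow almost immediately from chaining the $K$-property together with the definition of exact interpolation spaces, so there is essentially no real obstacle to overcome; the content is bookkeeping of definitions. Let me outline how I would organize this.

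First, I would fix two exact interpolation spaces $Y$ (intermediate to $\overline{\calH}$) and $X$ (intermediate to $\overline{\calK}$), and take arbitrary $x^0 \in X$ and $y^0 \in \Sigma(\overline{\calH})$ satisfying the $K$-domination hypothesis
\[
K\bigl(t, y^0;\, \overline{\calH}\bigr) \le K\bigl(t, x^0;\, \overline{\calK}\bigr), \qquad t > 0.
\]
The goal is to show $y^0 \in Y$ with $\|y^0\|_Y \le \|x^0\|_X$.

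Next, I would directly invoke the relative $K$-property: it furnishes an operator $T \in \bo(\overline{\calK}; \overline{\calH})$ with $\|T\|_{\bo(\overline{\calK}; \overline{\calH})} \le 1$ and $T x^0 = y^0$. At this point the $K$-functionals drop out of the picture entirely and the argument reduces to a property of $T$.

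Finally, since $Y$ and $X$ are by hypothesis exact (relative) interpolation spaces for the pair of couples $\overline{\calK}, \overline{\calH}$, applying the defining inequality \eqref{C-int} with $C = 1$ to the contraction $T$ yields
\[
\|T\|_{\bo(X;Y)} \le \|T\|_{\bo(\overline{\calK}; \overline{\calH})} \le 1.
\]
In particular $T$ maps $X$ into $Y$, so $y^0 = T x^0 \in Y$, and
\[
\|y^0\|_Y = \|T x^0\|_Y \le \|x^0\|_X,
\]
which is exactly exact $K$-monotonicity of the pair $(Y, X)$. Since $Y$ and $X$ were arbitrary exact interpolation spaces, this says that $\overline{\calH}, \overline{\calK}$ form an exact relative Calderón pair, completing the proof.
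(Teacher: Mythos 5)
Your proof is correct and matches the paper's argument essentially line by line: take arbitrary exact interpolation spaces $Y$, $X$, invoke the relative $K$-property to produce a contraction $T$ with $Tx^0 = y^0$, and then use exactness of the interpolation pair to get $\|T\|_{\bo(X;Y)}\le 1$, which gives $\|y^0\|_Y \le \|x^0\|_X$. Nothing to add.
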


\begin{proof} Let $Y$, $X$ be exact interpolation spaces relative to $\overline{\calH}$, $\overline{\calK}$
and take $x^0\in X$ and $y^0\in\Sigma(\,\overline{\calH}\,)$ such that \eqref{below} holds. By the \Kpr there is $T:\overline{\calK}\to\overline{\calH}$ such that
$Tx^0=y^0$ and $\left\|\,T\,\right\|\le 1$. Then $\left\|\,T\,\right\|_{\,\bo(X;Y)}\le 1$, and so
$\|\,y^0\,\|_Y =\|\,Tx^0\,\|_Y\le \|\,x^0\,\|_X$. We have shown that $Y$, $X$ are exact $K$-monotonic.
\end{proof}

In the diagonal case $\overline{\calH}=\overline{\calK}$, we simply say that $\overline{\calH}$ is
an \textit{exact Calderón couple} if for intermediate spaces $Y,X$, the
property of being exact interpolation is equivalent to being exact $K$-monotonic. Likewise, we say that $\overline{\calH}$
has the \textit{\Kpr} if the pair of couples $\overline{\calH}$, $\overline{\calH}$ has that property.

\begin{rem} \label{simrem}
For an operator $T:\overline{\calK}\to\overline{\calH}$ to be a contraction, it is necessary and sufficient that
\begin{equation}\label{indee}K\left(t,Tx;\,\overline{\calH}\,\right)\le K\left(t,x;\,\overline{\calK}\,\right),\qquad
x\in \Sigma(\,\overline{\calK}\,),\, t>0.\end{equation}
Indeed, the necessity is immediate. To prove the sufficiency it suffices to observe that letting $t\to \infty$ in \eqref{indee} gives
$\left\|\, Tx\,\right\|_0\le\left\|\,x\,\right\|_0$, and dividing \eqref{indee} by $t$, and then letting $t\to 0$, gives
that $\left\|\, Tx\,\right\|_1\le\left\|\,x\,\right\|_1$.
\end{rem}

\section{Mapping properties of Hilbert couples} \label{chap2}

\subsection{Main results}
We shall elaborate on the following main result from \cite{A2}.

\begin{mth} \label{mthm} Any pair of regular Hilbert couples $\overline{\calH}$, $\overline{\calK}$ has the
relative \Kpr.
\end{mth}

Before we come to the proof of Theorem \ref{mthm},
we note some consequences of it. We first have the following corollary, which shows that a strong
form of the $K$-property is true.

\begin{cor} \label{mcor} Let $\overline{\calH}$ be a regular Hilbert couple and $x^0,y^0\in\Sigma$ elements
such that
\begin{equation}\label{snorm}K\left(t,y^{0}\right)\le M_0^{\,2}\, K\left(M_1^{\,2}t/M_0^{\,2}\, ,\,x^{0}\right),\qquad t>0.\end{equation}
Then
\begin{enumerate}
\item[(i)] There exists a map $T\in \bo\left(\,\overline{\calH}\,\right)$ such that
$Tx^{0}=y^{0}$ and $\left\|\,T\,\right\|_{\,\bo\left(\calH_i\right)}\le M_i$, $i=0,1$.
\item[(ii)] If $x^{0}\in X$ where $X$ is an interpolation space of type $H$, then
$$\|\,y^{0}\,\|_X\le H\left(M_0,M_1\,\right)\,\|\,x^{0}\,\|_X.$$
\end{enumerate}
\end{cor}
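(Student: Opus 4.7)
The plan is to reduce the corollary to Theorem \ref{mthm} by a rescaling trick. The inequality \eqref{snorm} is not quite in the form required by the $K$-property, but it becomes so if we rescale the norms on a second copy of $\overline{\calH}$. Concretely, I would introduce the couple $\overline{\calK}=(\calK_0,\calK_1)$ where $\calK_i$ is $\calH_i$ as a vector space but equipped with the norm $\|x\|_{\calK_i}=M_i\|x\|_{\calH_i}$. A direct computation from the definition of the $K$-functional gives
$$K\left(t,x;\,\overline{\calK}\,\right)=\inf_{x=x_0+x_1}\bigl\{M_0^{\,2}\|x_0\|_0^{\,2}+tM_1^{\,2}\|x_1\|_1^{\,2}\bigr\}=M_0^{\,2}\,K\!\left(M_1^{\,2}t/M_0^{\,2},\,x;\,\overline{\calH}\,\right).$$
Thus the hypothesis \eqref{snorm} reads exactly $K(t,y^0;\overline{\calH})\le K(t,x^0;\overline{\calK})$ for all $t>0$, which is the condition \eqref{below} for the couple pair $(\overline{\calK},\overline{\calH})$.

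Next I would invoke Theorem \ref{mthm} applied to $\overline{\calK}$ and $\overline{\calH}$ (both regular, since $\overline{\calK}$ is just a renorming of $\overline{\calH}$) to obtain a map $T\in\bo(\overline{\calK};\overline{\calH})$ satisfying $Tx^0=y^0$ and $\|T\|_{\bo(\overline{\calK};\overline{\calH})}\le 1$. Unpacking the last bound, $\|Tx\|_{\calH_i}\le\|x\|_{\calK_i}=M_i\|x\|_{\calH_i}$ for $i=0,1$, i.e.\ $T\in\bo(\overline{\calH})$ with $\|T\|_{\bo(\calH_i)}\le M_i$. This proves (i).

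For (ii), I would simply apply the defining property of a type-$H$ interpolation space, namely \eqref{Htype}, to the operator $T$ furnished by (i): the bounds $\|T\|_{\bo(\calH_i)}\le M_i$ give $\|T\|_{\bo(X)}\le H(M_0,M_1)$, hence $\|y^0\|_X=\|Tx^0\|_X\le H(M_0,M_1)\|x^0\|_X$.

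There is no real obstacle here; the content is Theorem \ref{mthm}, and the corollary is just a homogeneity/rescaling packaging of it. The only thing to get right is the direction of the scaling so that the factors $M_0^{\,2}$ and $M_1^{\,2}/M_0^{\,2}$ in \eqref{snorm} match the $K$-functional of $\overline{\calK}$, which is what motivates the choice $\|x\|_{\calK_i}=M_i\|x\|_{\calH_i}$ above. (One should also note, implicitly, that the case $M_0=0$ or $M_1=0$ is trivial or handled by continuity, so we may assume both are positive when forming $\overline{\calK}$.)
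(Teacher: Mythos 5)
Your proof is correct and coincides with the paper's own argument: both introduce the rescaled couple $\overline{\calK}$ with $\|x\|_{\calK_i}=M_i\|x\|_{\calH_i}$, identify \eqref{snorm} with the $K$-comparison \eqref{below} for the pair $(\overline{\calK},\overline{\calH})$, and invoke Theorem \ref{mthm} to produce the contraction. For part (ii) you apply the type-$H$ definition directly to the operator from (i), which is exactly the content of the paper's terse reference to Lemma \ref{caldp} unpacked for general $H$.
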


\begin{proof} (i) Introduce a new couple $\overline{\calK}$ by letting $\left\|\,x\,\right\|_{\calK_i}=M_i\|\,x\,\|_{\calH_i}$.
The relation \eqref{snorm} then says that
$$K\left(t,y^{0};\overline{\calH}\right)\le K\left(t,x^{0};\overline{\calK}\right),\qquad t>0.$$
By Theorem \ref{mthm} there is a contraction $T:\overline{\calK}\to\overline{\calH}$ such that
$Tx^{0}=y^{0}$.
It now suffices to note that
$\left\|\,T\,\right\|_{\,\bo\left(\calH_i\right)}=M_i\left\|\,T\,\right\|_{\,\bo\left(\calK_i;\calH_i\right)}$; (ii) then follows from Lemma \ref{caldp}.
\end{proof}

We next mention some equivalent versions of Theorem \ref{mthm}, which uses the families of functionals
$K_p$ and $E_p$ defined (for $p\ge 1$ and $t,s>0$) via
\begin{equation}\label{rem?}\begin{split}
K_p(t)&=K_p(t,x)=K_p\left(t,x;\overline{\calH}\right)=\inf_{x=x_0+x_1}\left\{\,\left\|\,x_0\,\right\|_0^{\,p}+t\left\|\,x_1\,\right\|_1^{\,p}\,\right\}\\
E_p(s)&=E_p(s,x)=E_p\left(s,x;\overline{\calH}\right)=\inf_{\left\|\,x_0\,\right\|_0^{\, p}\le s}\left\{\,\left\|\,x-x_0\,\right\|_1^{\,p}\,\right\}.\\
\end{split}\end{equation}
Note that $K=K_2$ and that $E_p(s)=E_1\left(s^{1/p}\right)^{\,p}$; the $E$-functionals are used in
approximation theory. One has that $E_p$ is decreasing and convex on $\R_+$ and that
$$K_p(t)=\inf_{s>0}\left\{\, s+tE_p(s)\,\right\},$$
which means that $K_p$ is a kind of \textit{Legendre transform} of $E_p$.
The inverse
Legendre transformation
takes the form
$$E_p(s)=\sup_{t>0}\left\{\,\frac {K_p(t)} t-\frac s t\,\right\}.$$
It is now immediate that, for all $x\in\Sigma\left(\,\overline{\calK}\,\right)$ and
$y\in\Sigma\left(\,\overline{\calH}\,\right)$, we have
\begin{equation}\label{stf}K_p(t,y)\le K_p(t,x),\quad t>0\qquad \Leftrightarrow\qquad E_p(s,y)\le E_p(s,x),\quad s>0.\end{equation}
Since moreover $E_p(s)=E_2\left(s^{2/p}\right)^{\,p/2}$, the conditions in \eqref{stf} are equivalent to that
$K(t,y)\le K(t,x)$ for all $t>0$.
We have shown the following result.

\begin{cor} In Theorem \ref{mthm}, one can substitute the $K$-functional for any of the functionals
$K_p$ or $E_p$.
\end{cor}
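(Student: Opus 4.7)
The plan is to reduce the generalized hypotheses to the one already handled by Theorem \ref{mthm}. Suppose we are given $x^0\in\Sigma(\overline{\calK})$ and $y^0\in\Sigma(\overline{\calH})$ satisfying either $K_p(t,y^0;\overline{\calH})\le K_p(t,x^0;\overline{\calK})$ for all $t>0$, or $E_p(s,y^0;\overline{\calH})\le E_p(s,x^0;\overline{\calK})$ for all $s>0$. My goal is to deduce the quadratic inequality $K(t,y^0;\overline{\calH})\le K(t,x^0;\overline{\calK})$ for all $t>0$, whereupon Theorem \ref{mthm} immediately supplies the desired contraction $T\in\bo(\overline{\calK};\overline{\calH})$ with $Tx^0=y^0$.

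First I would invoke \eqref{stf}, which states that $K_p$-dominance and $E_p$-dominance are equivalent. This equivalence is a direct consequence of the Legendre-type identities $K_p(t)=\inf_{s>0}\{s+tE_p(s)\}$ and $E_p(s)=\sup_{t>0}\{K_p(t)/t-s/t\}$ recorded just above the corollary, and relies on the convexity and monotonicity of $E_p$ noted there. So without loss of generality I may assume $E_p$-dominance.

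Next, I would use the scaling identity $E_p(s)=E_2(s^{2/p})^{p/2}$. Since $r\mapsto r^{p/2}$ is strictly increasing on $\R_+$, the inequality $E_p(s,y^0)\le E_p(s,x^0)$ is equivalent to $E_2(s^{2/p},y^0)\le E_2(s^{2/p},x^0)$; and because $s\mapsto s^{2/p}$ is a bijection of $\R_+$ onto itself, the latter holds for every $s>0$ iff $E_2(u,y^0)\le E_2(u,x^0)$ for every $u>0$. A second application of \eqref{stf}, now with $p=2$, converts this $E_2$-dominance into $K_2$-dominance, which is precisely the $K$-dominance hypothesis of Theorem \ref{mthm}.

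There is essentially no hard step: the preparatory remarks in the preamble already contain the entire argument. The only point worth double-checking is that \eqref{stf} is a genuine two-sided equivalence, which amounts to saying that the Legendre transform is involutive on the relevant class of functions; this is ensured by the convexity and monotonicity of $E_p$ and the infimum representation of $K_p$ recalled above.
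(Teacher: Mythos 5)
Your proof is correct and follows exactly the same chain as the paper: apply \eqref{stf} to pass from $K_p$-dominance to $E_p$-dominance, use the scaling identity $E_p(s)=E_2(s^{2/p})^{p/2}$ to reduce to $E_2$-dominance, apply \eqref{stf} once more with $p=2$ to recover $K$-dominance, and then invoke Theorem \ref{mthm}. No meaningful differences.
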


Define an exact interpolation norm $\|\cdot\|_{\vro,p}$ relative to
$\overline{\calH}$ by
$$\left\|\,x\,\right\|_{\vro,p}^{\,p}=\int_{[0,\infty]}\left(1+t^{-1}\right)K_p(t,x)\, d\vro(t)$$
where $\vro$ is a positive Radon measure on $[0,\infty]$. This norm is non-quadratic when $p\ne 2$, but is of course equivalent to the quadratic norm corresponding to $p=2$.

\subsection{Reduction to the diagonal case} It is not hard to reduce
the discussion of Theorem \ref{mthm} to a diagonal situation.

\begin{lem} \label{diagred} If the \Kpr holds for regular Hilbert couples in the diagonal case $\overline{\calH}=\overline{\calK}$, then it
holds in general.
\end{lem}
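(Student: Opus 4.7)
The natural plan is the standard direct-sum reduction. Form the Hilbertian direct sum couple $\overline{\calL} = (\calL_0,\calL_1)$ with
\[
\calL_i = \calH_i \oplus \calK_i, \qquad \left\|\,(y,x)\,\right\|_{\calL_i}^{\,2} = \left\|\,y\,\right\|_{\calH_i}^{\,2} + \left\|\,x\,\right\|_{\calK_i}^{\,2}.
\]
Since $\calH_0\cap\calH_1$ is dense in each $\calH_i$ and similarly for $\calK$, regularity of $\overline{\calL}$ follows immediately. I would then view $\overline{\calH}$ and $\overline{\calK}$ as summands of $\overline{\calL}$ via the canonical contractive injections $J_\calH(y)=(y,0)$, $J_\calK(x)=(0,x)$ and contractive coordinate projections $P_\calH(y,x)=y$, $P_\calK(y,x)=x$, each of which has norm $\le 1$ on both endpoint spaces.

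The key computation is that the $K$-functional respects the direct sum on these one-sided elements:
\[
K\left(t,(y,0);\,\overline{\calL}\,\right) = K\left(t,y;\,\overline{\calH}\,\right), \qquad K\left(t,(0,x);\,\overline{\calL}\,\right) = K\left(t,x;\,\overline{\calK}\,\right).
\]
The inequality $\le$ is obvious (use a decomposition of $y$ paired with zeros). For $\ge$, given any decomposition $(y,0)=(a_0,b_0)+(a_1,b_1)$ in $\calL_0+\calL_1$, the constraint $b_0+b_1=0$ forces $b_0,b_1\in\calK_0\cap\calK_1$ and the sum $\|b_0\|_{\calK_0}^2+t\|b_1\|_{\calK_1}^2$ is nonnegative, so dropping the $b$-terms only decreases the total; this gives a valid decomposition $y=a_0+a_1$ in $\calH_0+\calH_1$ with smaller or equal cost. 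The second identity is symmetric.

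Combining these two identities with the hypothesis $K(t,y^0;\overline{\calH})\le K(t,x^0;\overline{\calK})$ yields
\[
K\left(t,(y^0,0);\,\overline{\calL}\,\right) \le K\left(t,(0,x^0);\,\overline{\calL}\,\right), \qquad t>0.
\]
Applying the assumed diagonal \Kpr\ to $\overline{\calL}$, with source $(0,x^0)$ and target $(y^0,0)$, produces a contraction $S\in\bo(\overline{\calL})$ with $S(0,x^0)=(y^0,0)$. Setting $T := P_\calH\, S\, J_\calK:\overline{\calK}\to\overline{\calH}$ gives a composition of contractions, hence a contraction with
\[
Tx^0 = P_\calH S(0,x^0) = P_\calH(y^0,0) = y^0,
\]
which is exactly the required conclusion.

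The argument is essentially formal; no step presents a genuine obstacle. The only point that deserves care is the $K$-functional identity for $(y,0)$ and $(0,x)$: one must verify that the optimal decomposition in $\overline{\calL}$ really can be taken with zero in the off-diagonal slot, which is the short convexity observation sketched above. Everything else is bookkeeping with contractive injections and projections on Hilbert direct sums.
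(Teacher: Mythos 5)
Your proof is correct and follows essentially the same route as the paper: form the direct sum couple, observe that the $K$-functional of a one-sided element in the sum coincides with the $K$-functional in the corresponding factor, apply the diagonal $K$-property to the sum couple, and compress via the canonical injection and projection. The paper states the slightly more general additivity $K(t,x\oplus y;\overline{\calS})=K(t,x;\overline{\calH})+K(t,y;\overline{\calK})$ rather than the one-sided special case you verify, but this is an inessential difference.
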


\begin{proof} Fix elements $y^{0}\in\Sigma(\,\overline{\calH}\,)$ and
$x^{0}\in\Sigma(\,\overline{\calK}\,)$ such that the inequality \eqref{below} holds.
We must
construct a map $T:\overline{\calK}\to\overline{\calH}$
such that $Tx^{0}=y^{0}$ and $\left\|\,T\,\right\|\le 1$.

To do this, we form the direct sum $\overline{\calS}=\left(\calH_0\oplus\calK_0,\calH_1\oplus\calK_1\right)$.
It is clear that $\calS_0+\calS_1=\left(\calH_0+\calH_1\right)\oplus\left(\calK_0+\calK_1\right)$,
and that
$$K\left(t,x\oplus y;\overline{\calS}\right)=K\left(t,x;\overline{\calH}\right)+K\left(t,y;\overline{\calK}
\right).$$
Then
$$K\left(t,0\oplus y^{0};\overline{\calS}\right)\le K\left(t,x^{0}\oplus 0;\overline{\calS}\right).$$
Hence assuming that the couple $\overline{\calS}$ has the \Kpr, we
can assert the existence of a map $S\in \bo(\,\overline{\calS}\,)$ such that
$S(x^{0}\oplus 0)=0\oplus y^{0}$ and $\left\|\,S\,\right\|\le 1$. Letting
$P:\calS_0+\calS_1\to \calK_0+\calK_1$ be the orthogonal projection, the assignment
$Tx=PS(x\oplus 0)$ now defines a map such that $Tx^{0}=y^{0}$ and $\left\|\,T\,\right\|_{\,\bo(\overline{\calH};\overline{\calK})}\le 1$.
\end{proof}

\subsection{The principal case} \label{tml} The core content of Theorem \ref{mthm} is contained in the following statement.

\begin{thm} \label{mlem} Suppose that a regular Hilbert couple $\overline{\calH}$ is finite dimensional and that all eigenvalues of the corresponding operator $A$ are of unit multiplicity.
Then $\overline{\calH}$ has the \Kpr.
\end{thm}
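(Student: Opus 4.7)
The plan is to reduce everything to an explicit finite-dimensional convex feasibility problem in the eigenbasis of $A$, and then to solve it by Hahn--Banach duality. By Lemma \ref{diagred} I may assume $\overline{\calH} = \overline{\calK}$. Diagonalize $A$ in an orthonormal basis $\{e_i\}_{i=1}^n$ of $\calH_0$, with $Ae_i = \lambda_i e_i$ and the $\lambda_i$ distinct and positive. Lemma \ref{kcalc} then gives
$$K(t,x;\overline{\calH}) = \sum_{i=1}^n \frac{t\lambda_i}{1+t\lambda_i}\,|x_i|^{\,2}$$
in these coordinates. Writing $x^0 = \sum \xi_i e_i$ and $y^0 = \sum \eta_i e_i$ and adjusting unit phases of the $e_i$ so that $\xi_i,\eta_i \ge 0$, the hypothesis \eqref{below} becomes the non-negativity on $(0,\infty)$ of the rational function
$$F(t) \;:=\; \sum_{i=1}^n \frac{t\lambda_i}{1+t\lambda_i}\bigl(\xi_i^{\,2} - \eta_i^{\,2}\bigr).$$

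The object to construct is a matrix $T$ (in this basis) with $T\xi = \eta$ and $\|T\|_{\bo(\calH_j)} \le 1$ for $j = 0,1$; the second condition reads $T^*AT \le A$ by the identity from \textsection\ref{donolemm}. The admissible set
$$\mathcal{T} \;:=\; \bigl\{\,T \in M_n(\C) \;:\; T^*T \le I,\ T^*AT \le A\,\bigr\}$$
is convex and compact, and so is its image $\mathcal{T}\xi \subset \C^n$ under evaluation at $\xi$. By Hahn--Banach separation, $\eta \in \mathcal{T}\xi$ is equivalent to
$$|\langle \eta, z\rangle| \;\le\; \sup_{T \in \mathcal{T}} |\langle T\xi, z\rangle| \qquad \text{for every } z \in \C^n.$$
Thus everything comes down to a sharp evaluation of this dual supremum and its comparison, via the hypothesis $F \ge 0$, with $|\langle \eta, z\rangle|$.

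To evaluate the supremum I would apply Lagrangian duality to the two matrix constraints $T^*T \le I$ and $T^*AT \le A$; optimizing out the two PSD multipliers, I expect the dual to collapse to an expression of the form $\inf_{t>0}\bigl(\|z_0\|_0^{\,2} + t^{-1}\|z_1\|_1^{\,2}\bigr)$ over decompositions $z = z_0 + z_1$, weighted by the corresponding $K$-functional of $\xi$, i.e., essentially a product of $K$-functionals of $\xi$ and $z$. A Cauchy--Schwarz-type estimate applied to the symmetric expression for $|\langle \eta, z\rangle|^{\,2}$ then finishes the argument once the hypothesis $F \ge 0$ is rewritten as pointwise domination of $K$-functionals.

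The principal obstacle will be making this dual computation sharp. The simple-spectrum hypothesis is crucial here: in the eigenbasis of $A$ the distinctness of the $\lambda_i$ ensures that the partial-fraction decomposition $F(t) = c_\infty + \sum_i c_i/(1+t\lambda_i)$ has uniquely determined coefficients $c_i$, which in turn should permit an explicit reading off of the matrix entries of an extremal $T \in \mathcal{T}$ realizing $T\xi = \eta$. Repeated eigenvalues would collapse the pole structure of $F$ and obstruct this unambiguous recovery, which is presumably why the degenerate case requires the separate limiting argument deferred to Section \ref{appa}.
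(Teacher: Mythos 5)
Your setup is sound: reducing to the diagonal case via Lemma \ref{diagred}, diagonalizing $A$, normalizing to nonnegative coordinates, and observing that the admissible set $\mathcal{T}=\{T : T^*T\le I,\ T^*AT\le A\}$ is convex and weak-operator compact so that $\mathcal{T}\xi$ is a compact convex subset of $\C^n$. Hahn--Banach separation then correctly reduces everything to comparing $|\langle\eta,z\rangle|$ with the support function $\sup_{T\in\mathcal{T}}|\langle T\xi,z\rangle|$.

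The gap is that you never actually compute this support function, and I do not believe the Lagrangian dual ``collapses'' the way you hope. The optimization $\sup_T \re\langle T\xi,z\rangle$ subject to two matrix inequalities is a genuine SDP: eliminating $T$ in the Lagrangian leaves you minimizing a nonlinear functional over two coupled PSD multipliers $\Lambda_0,\Lambda_1$, and there is no reason this reduces to a single one-parameter family governed by the $K$-functional. Even a quick sanity check suggests trouble: in the degenerate case $A=I$, one has $\sup_{\|T\|\le1}|\langle T\xi,z\rangle|=\|\xi\|_0\|z\|_0$, whereas any na\"{\i}ve ``product of $K$-functionals'' formula picks up extraneous factors of the kind $t/(1+t)^2$. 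The final Cauchy--Schwarz step is similarly asserted rather than performed. As written, the proposal identifies where the hard core of the theorem sits but does not supply it --- the entire difficulty is precisely to show that the two convex bodies $\mathcal{T}\xi$ and $\{y : K(t,y)\le K(t,\xi)\ \forall t\}$ coincide, and their support functions are equally inaccessible in closed form.

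The paper's actual proof sidesteps duality entirely and is constructive. After a $\rho$-perturbation ($\beta_i=\lambda_i$, $\alpha_i=\rho\lambda_i$) the deficiency $\kfun_\beta(t,x^0)-\kfun_\alpha(t,y^0)$ is written as $P(t)/L(t)$ for a real polynomial $P$ of degree $2n-1$; the degree, sign distribution, and simple-zero structure of $P$ are analyzed, its real zeros are split into two families $\{\delta_j\}$, $\{\gamma_k\}$ according to the sign of $LP'$, and a linear map $F$ on an enlarged couple is built from residue formulas of the kind \eqref{zwolf}--\eqref{dreiz}. The required contraction $T$ is then the compression of $F$, and the needed norm inequality becomes a pointwise identity of the form $\kfun_\beta(t,x)-\kfun_\alpha(t,Tx)=|Q(t)|^2/(L(t)P(t))\ge 0$. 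This is a residue/partial-fraction argument, not a convex-duality argument. Your closing remark about the pole structure of $F(t)$ is exactly the right intuition --- but the paper uses it to actually write down $T$, whereas your plan leaves the decisive computation as an expectation. If you want to salvage the duality route you would have to exhibit a closed form for the support function of $\mathcal{T}\xi$, which is at least as hard as the construction the paper carries out.
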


We shall settle for proving Lemma \ref{mlem} in this section, postponing to Section \ref{appa} the general case of Theorem \ref{mthm}.

To prepare for the proof, we write the eigenvalues $\lambda_i$ of $A$ in increasing order,
$$\sigma(A)=\{\lambda_i\}_1^n\quad\text{where}\quad 0<\lambda_1<\cdots<\lambda_n.$$
Let $e_i$ be corresponding eigenvectors of unit length for the
norm of $\calH_0$. Then for a vector $x=\sum x_ie_i$ we have
$$\|\,x\,\|_0^{\,2}=\sum_1^n|x_i|^{\,2}\qquad ,\qquad \|\,x\,\|_1^{\,2}=\sum_1^n\lambda_i|x_i|^{\,2}.$$
Working in the coordinate system $(e_i)$,
the couple $\overline{\calH}$ becomes identified with the
$n$-dimensional weighted $\ell_2$ couple
$$\overline{\ell_2^n}(\lambda):=\left(\ell_2^n,\ell_2^n(\lambda)\right),$$
where we write $\lambda$ for the sequence $(\lambda_i)_1^n$.

We will henceforth identify a vector
$x=\sum x_ie_i$ with the point $x=(x_i)_1^n$ in $\C^{\,n}$; accordingly, the space
$\bo\left(\ell_2^n\right)$ is identified with the $C^*$-algebra $M_n(\C)$ of complex $n\times n$ matrices.

It will be convenient to reparametrize the $K$-functional for the couple $\overline{\ell_2^n}(\lambda)$ and write
\begin{equation}\label{clam}\kfun_\lambda(t,x):=K\left(1/t,x;\,\overline{\ell_2^n}(\lambda)\,\right).\end{equation}
By Lemma \ref{kcalc} we have
\begin{equation}\label{clam2}\kfun_\lambda(t,x)=\sum_{i=1}^n\frac {\lambda_i}{t+\lambda_i}|x_i|^{\,2},\qquad x\in\C^{\,n}.\end{equation}

\subsection{Basic reductions} To prove that the couple $\overline{\ell_2^n}(\lambda)$ has the
\Kpr,
we introduce an auxiliary parameter $\rho>1$. The exact value of $\rho$ will change meaning during
the course of the argument, the main point being that it can be chosen arbitrarily close to $1$.

Initially, we pick any $\rho>1$ such that $\rho\lambda_i<\lambda_{i+1}$ for all $i$;
we assume also that we are given two elements $x^{0},y^{0}\in\C^{\,n}$ such that
\begin{equation}\label{assume}\kfun_\lambda\left(t,y^{0}\right)<\frac 1 \rho \,\kfun_\lambda\left(t,x^{0}\right),\qquad t\ge 0.\end{equation}
We must
construct a matrix $T\in M_n(\C)$ such that
\begin{equation}\label{conclude}Tx^{0}=y^{0}\quad \text{and}\quad \kfun_\lambda\left(t,Tx\right)\le \kfun_\lambda\left(t,x\right), \quad x\in \C^{\,n},\,t>0.\end{equation}

Define $\tilde{x}^{0}=(|x^{0}_i|)_1^n$ and
$\tilde{y}^{0}=(|y^{0}_i|)_1^n$ and suppose that
$$\kfun_\lambda(t,\tilde{y}^{0})<\frac 1 \rho\, \kfun_\lambda(t,\tilde{x}^{0}),\qquad t\ge 0.$$
Suppose that we can find an operator $T_0\in M_n(\C)$ such that
$T_0\tilde{x}^{0}=\tilde{y}^{0}$ and $\kfun_\lambda\left(t,T_0x\right)<\kfun_\lambda(t,x)$ for all $x\in\C^{\,n}$ and $t>0$. Writing
$x^{0}_k=e^{i\theta_k}\tilde{x}^{0}_k$ and $y^{0}_k=e^{i\fii_k}\tilde{y}^{0}_k$
where $\theta_k,\fii_k\in\mathbf{R}$, we then have $Tx^{0}=y^{0}$ and
$\kfun_\lambda\left(t,Tx\right)< \kfun_\lambda(t,x)$  where
$$T=\diag(e^{i\fii_k})T_0\diag(e^{-i\theta_k}).$$
Replacing $x^0,y^0$ by $\tilde{x}^0$, $\tilde{y}^0$ we
can thus assume that the coordinates $x^{0}_i$ and $y^{0}_i$ are non-negative; replacing them by small
perturbations if necessary, we can assume that they are strictly positive, at the expense of slightly
diminishing the number $\rho$.

\smallskip

Now put $\beta_i=\lambda_i$ and $\alpha_i=\rho\lambda_i$. Our assumption on $\rho$ means that
$$0<\beta_1<\alpha_1<\cdots<\beta_n<\alpha_n.$$
Using the explicit expression for the $K$-functional, it is plain to check that
$$\kfun_\beta(t,x)\le \kfun_\alpha(t,x)\le\rho \kfun_\beta(t,x),\qquad x\in\C^n,\, t\ge 0.$$
Our assumption \eqref{assume} therefore implies that
\begin{equation}\label{assume2}\kfun_\alpha(t,y^{0})<\kfun_\beta(t,x^{0}),\quad t\ge 0.\end{equation}
We shall verify the existence of a matrix $T=T_\rho=T_{\rho,x^{0},y^{0}}$ such that
\begin{equation}\label{conclude2}Tx^{0}=y^{0}\quad\text{and}\quad \kfun_\alpha\left(t,Tx\right)\le \kfun_\beta\left(t,x\right),\qquad x\in\C^n,\,t>0.\end{equation}
It is clear by compactness that, as $\rho\downarrow 1$, the corresponding matrices $T_\rho$ will cluster at
some point $T$ satisfying $Tx^{0}=y^{0}$ and $\left\|\,T\,\right\|_{\,\bo(\,\overline{\calH}\,)}\le 1$.
(See Remark \ref{simrem}.)

\smallskip

In conclusion, the proof of Theorem \ref{mlem} will be complete when we can construct
a matrix $T$ satisfying \eqref{conclude2} with $\rho$ arbitrarily close to $1$.

\subsection{Construction of $T$} Let $\calP_k$ denote the linear space of complex polynomials of degree at most $k$.
We shall use the polynomials
$$L_\alpha(t)=\prod_1^n \left(t+\alpha_i\right)\quad ,\quad L_\beta(t)=\prod_1^n\left(t+\beta_i\right),$$
and the product $L=L_\alpha L_\beta$. Notice that
\begin{equation}\label{line}L'(-\alpha_i)<0\quad,\quad L'(-\beta_i)>0.\end{equation}
Recalling the formula \eqref{clam2}, it is clear that we can
define a real polynomial $P\in\calP_{2n-1}$ by
\begin{equation}\label{Pdef}\frac {P(t)}{L(t)}=\kfun_\beta\left(t,x^{0}\right)-\kfun_\alpha\left(t,y^{0}\right).\end{equation}
Clearly $P(t)> 0$ when $t\ge 0$. Moreover, a consideration of the residues at the poles of the right-hand member shows that $P$ is uniquely defined by the values
\begin{equation}\label{tuck}P\left(-\beta_i\right)=(x_i^{0})^{\,2}\beta_iL'\left(-\beta_i\right)\quad,\quad P\left(-\alpha_i\right)=-(y_i^{0})^{\,2}\alpha_iL'\left(-\alpha_i\right).\end{equation}
Combining with \eqref{line}, we conclude that
\begin{equation}\label{pine}P\left(-\alpha_i\right)>0\quad \text{and}\quad P\left(-\beta_i\right)>0.\end{equation}
Perturbing the problem slightly, it is clear that we can assume that $P$ has exact degree $2n-1$,
and that all zeros of $P$ have multiplicity $1$. (We here diminish the value of $\rho>1$ somewhat, if necessary.)

Now, $P$ has $2n-1$ simple zeros, which we split according to
$$P^{-1}\left(\left\{0\right\}\right)=\left\{-r_i\right\}_{i=1}^{2m-1}\cup\left\{-c_i,-\bar{c}_i\right\}_{i=1}^{n-m},$$
where the $r_i$ are positive and the $c_i$ are non-real, and chosen to have
positive imaginary parts.
The following is the key observation.

\begin{lem} We have that
\begin{equation}\label{one}L'\left(-\beta_i\right)P\left(-\beta_i\right)>0\quad,\quad L'\left(-\alpha_i\right)P\left(-\alpha_i\right)<0\end{equation}
and there is a splitting $\left\{r_i\right\}_{i=1}^{2m-1}=\left\{\delta_i\right\}_{i=1}^m\cup
\left\{\gamma_i\right\}_{i=1}^{m-1}$ such that
\begin{equation}\label{two}L\left(-\delta_j\right)P'\left(-\delta_j\right)>0\quad ,\quad L\left(-\gamma_k\right)P'\left(-\gamma_k\right)<0.\end{equation}
\end{lem}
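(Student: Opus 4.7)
The plan is to handle part~(1) by direct substitution in \eqref{tuck}, and then to prove part~(2) by a sign-counting argument that tracks the real zeros of $P$ relative to the intervals on which $L$ has constant sign. For (1), multiplying each identity of \eqref{tuck} by the corresponding value of $L'$ and using $L'(-\beta_i)>0>L'(-\alpha_i)$ from \eqref{line} makes the two inequalities in \eqref{one} fall out immediately: the right-hand sides become $(x_i^{0})^2\beta_i[L'(-\beta_i)]^2$ and $-(y_i^{0})^2\alpha_i[L'(-\alpha_i)]^2$, evidently positive and negative respectively.

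For (2), I would first pin down the sign pattern of $L$. Since the $2n$ roots interlace as $-\alpha_n<-\beta_n<\cdots<-\alpha_1<-\beta_1<0$ and the leading term of $L$ is $t^{2n}$, the sign of $L$ alternates as $t$ crosses each root: $L>0$ on $(-\infty,-\alpha_n)$, on each $(-\beta_{i+1},-\alpha_i)$ for $i=1,\dots,n-1$, and on $(-\beta_1,\infty)$, while $L<0$ on each $(-\alpha_i,-\beta_i)$. Combining part~(1) with \eqref{line} gives $P(-\alpha_i),\,P(-\beta_i)>0$ for every $i$, and $P>0$ on $[0,\infty)$ by \eqref{assume2}. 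Moreover, the leading coefficient of $P$ equals $\sum\beta_i(x_i^{0})^2-\sum\alpha_i(y_i^{0})^2$, which is strictly positive (multiply \eqref{assume2} by $t$, let $t\to\infty$, and invoke the earlier perturbation to exact degree $2n-1$); since the degree is odd, $P(t)\to-\infty$ as $t\to-\infty$.

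The last step is a parity count. By the intermediate value theorem together with simplicity of the zeros, the interval $(-\infty,-\alpha_n)$ contains an odd number $2j+1$ of real zeros of $P$, whereas each of the intervals $(-\alpha_i,-\beta_i)$, $(-\beta_{i+1},-\alpha_i)$, and $(-\beta_1,0)$ contains an even number (since $P>0$ at both endpoints). Simplicity also forces the sign of $P'$ to alternate across consecutive zeros within each interval: in any interval with $P>0$ at both endpoints the pattern of $P'$-signs reads $-,+,-,+,\dots,-,+$, giving equal numbers of zeros with $P'<0$ and $P'>0$; in $(-\infty,-\alpha_n)$ the alternation begins and ends with $P'>0$, giving $j+1$ zeros with $P'>0$ and $j$ with $P'<0$. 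Declaring a zero $-r$ to be of $\delta$-type when $L(-r)P'(-r)>0$ and of $\gamma$-type when $L(-r)P'(-r)<0$, and using that $L$ has constant sign on each interval, every even-multiplicity interval contributes equally to both classes, while the leftmost interval contributes $j+1$ of $\delta$-type (since $L>0$ there) and $j$ of $\gamma$-type. Summing and using $2m-1=(2j+1)+2(\text{rest})$ yields exactly $m$ zeros of $\delta$-type and $m-1$ of $\gamma$-type, as required. I expect the only real obstacle to be keeping this sign bookkeeping straight, which I would handle by an explicit interval-by-interval tabulation rather than any closed-form calculation.
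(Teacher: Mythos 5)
Your proof is correct, but it proceeds by a genuinely different route than the paper's. For part~(2), the paper first locates one $\delta$ directly: it shows the leftmost real zero $-h$ of $LP$ must be a zero of $P$ (since $P$ has odd degree with positive leading coefficient, $P(-\alpha_n)>0$ forces $h>\alpha_n$) and that $L(-h)P'(-h)=(LP)'(-h)>0$; it then divides out $(t+h)$ to form $P_*$, observes that $LP_*$ has \emph{even} degree with all of its $2n+2m-2$ real zeros simple, and reads off the required split of the remaining zeros from a single global alternation of $(LP_*)'$ over those zeros, using the known signs $L'(-\alpha_i)P_*(-\alpha_i)<0<L'(-\beta_i)P_*(-\beta_i)$ to pin down the bookkeeping. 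Your argument skips the factorization altogether and instead works interval-by-interval between consecutive roots of $L$, counting zeros of $P$ by parity and tracking the alternation of $\operatorname{sgn}P'$ locally within each interval where $\operatorname{sgn}L$ is constant; the odd contribution from $(-\infty,-\alpha_n)$ produces the off-by-one asymmetry $m$ vs.\ $m-1$. Both are parity arguments at heart, but yours is more hands-on and avoids introducing the auxiliary polynomial $P_*$, at the cost of slightly longer bookkeeping; the paper's is more compact once the trick of peeling off $-h$ to get the even-degree product $LP_*$ is spotted. One small stylistic remark: to get $P(t)\to-\infty$ as $t\to-\infty$ you do not need to extract the leading coefficient via the $t\to\infty$ limit of $t(\kfun_\beta-\kfun_\alpha)$ — it is enough to note that $P>0$ near $+\infty$ and $\deg P=2n-1$ is odd, which is exactly how the paper argues.
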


\begin{proof} The inequalities \eqref{one} follow immediately from
\eqref{pine} and \eqref{line}. It remains to prove \eqref{two}.

Let $-h$ denote the leftmost real zero of the polynomial $LP$ (of degree $4n-1$).
We claim
that $P(-h)=0$. If this were not the case, we would have $h=\alpha_n$. Since the degree of $P$
is odd, $P(-t)$ is negative for large values of $t$, and so $P(-\alpha_n)<0$ contradicting \eqref{pine}.
We have shown that $P(-h)=0$. Since all zeros of $LP$ have multiplicity $1$, we have
$(LP)'(-h)\ne 0$, whence
$$L(-h)P'(-h)=(LP)'(-h)>0.$$
We write $\delta_m=h$ and put $P_*(t)=P(t)/(t+\delta_m)$. Since $t+\delta_m>0$
for
$t\in\{-\alpha_i,-\beta_i\}_1^n$, we have by \eqref{pine} that for all $i$
$$P_*(-\alpha_i)>0\quad \text{and}\quad P_*(-\beta_i)>0.$$
Denote by
$\{-{r_j}^*\}_{j=1}^{2m-2}$ the real zeros of $P_*$. Since the degree of $LP_*$ is even and the polynomial
$(LP_*)'$ has alternating signs in the set
$\{-\alpha_i,-\beta_i\}_{i=1}^n\cup\{-{r_i}^*\}_{i=1}^{2m-2}$, we can split the zeros of $P_*$ as
$\{-\delta_i,-\gamma_i\}_{i=1}^{m-1}$, where
\begin{equation}\label{pschine}L(-\delta_i)P_*'(-\delta_i)>0\quad ,\quad L(-\gamma_i)P_*'(-\gamma_i)<0.\end{equation}
Since $P'(-{r_j}^*)=(\delta_m-{r_j}^*)P_*'(-{r_j}^*)$ and $\delta_m>{r_j}^*$, the signs of $P'(-{r_j}^*)$ and
$P_*'(-{r_j}^*)$ are equal, proving \eqref{two}.
\end{proof}

Recall that $\{-c_i\}_1^{n-m}$ denote the zeros of $P$ such that $\im c_i>0$. We put
(with the convention that an empty product equals $1$)
$$L_\delta(t)=\prod_{i=1}^m(t+\delta_i)\quad ,\quad L_\gamma(t)=\prod_{i=1}^{m-1}(t+\gamma_i)
\quad ,\quad L_c(t)=\prod_{i=1}^{n-m}(t+c_i).$$

We define a linear map $F:\C^{n+m}\to\C^{n+m-1}$ in the following way.
First define a subspace $U\subset \calP_{2n-1}$ by
$$U=\left\{\,L_cq\,;\, q\in \calP_{n+m-1}\,\right\}.$$

Notice that $U$ has dimension $n+m-1$ and that
$P\in U$; in fact $P=aL_cL_c^{\,*}L_\delta L_\gamma$ where $a$ is the leading coefficient
and the $*$-operation is defined by $L^{\,*}(z)=\overline{L(\bar{z})}$.

For a polynomial $Q\in U$ we have
\begin{equation}\label{sasplit}\begin{split}
\frac {\left|\,Q(t)\,\right|^{\,2}}{L(t)P(t)}&=
\sum_{i=1}^n|x_i|^{\,2}\frac {\beta_i} {t+\beta_i}+\sum_{i=1}^n|x_i'|^{\,2}\frac {\delta_i}{t+\delta_i}\\
&-\sum_{i=1}^n|y_i|^{\,2}\frac {\alpha_i} {t+\alpha_i}-\sum_{i=1}^{m-1}|y_i'|^{\,2}\frac {\gamma_i}{t+\gamma_i},\\
\end{split}
\end{equation}
where, for definiteness,
\begin{eqnarray}
\label{zwolf}x_i&=\dfrac {Q(-\beta_i)} {\sqrt{\,\beta_iL'(-\beta_i)P(-\beta_i)}}\quad\,\, ; \quad\,\,
x_j'&=\frac {Q(-\delta_j)} {\sqrt{\,\delta_jL'(-\delta_j)P(-\delta_j)}}\\
\label{dreiz}y_i&=\dfrac {Q(-\alpha_i)} {\sqrt{\,-\alpha_iL'(-\alpha_i)P(-\alpha_i)}}\quad ; \quad
y_j'&=\frac {Q(-\gamma_j)} {\sqrt{\,-\gamma_jL'(-\gamma_j)P(-\gamma_j)}}.
\end{eqnarray}

The identities in \eqref{zwolf} give rise to a linear map
\begin{equation}\label{L1}M:\C^{\,n}\oplus\C^{\,m}\to U\quad ;\quad \left[x;x'\right]\mapsto Q.\end{equation}
We can similarly regard \eqref{dreiz} as a linear map
\begin{equation}\label{L2}N:U\to\C^{\,n}\oplus\C^{\,m-1}\quad ;\quad Q\mapsto \left[y;y'\right].\end{equation}
Our desired map $F$ is defined as the composite
$$F=N M:\C^{\,n}\oplus\C^{\,m}\to\C^{\,n}\oplus\C^{\,m-1}\quad ;\quad [x;x']\mapsto [y;y'].$$ Notice that if
$Q=M\left[x;x'\right]$ and $\left[y;y'\right]=F\left[x;x'\right]$ then \eqref{sasplit} means that
$$\kfun_{\beta\oplus\delta}\left(t,\left[x; x'\right]\right)-\kfun_{\alpha\oplus\gamma}\left(t,F\left[x; x'\right]\right)=\frac
{\left|\,Q(t)\,\right|^{\,2}}{L(t)P(t)}\ge 0,\qquad t\ge 0.$$
This implies that $F$ is a contraction from $\overline{\ell_2^{n+m}}(\beta\oplus\delta)$ to
$\overline{\ell_2^{n+m-1}}(\alpha\oplus\gamma)$.

We now define $T$ as a "compression" of $F$. Namely, let
$E:\C^{\,n}\oplus\C^{\,m-1}\to\C^{\,n}$ be the projection onto the first $n$ coordinates, and define
an operator $T$ on $\C^n$ by
$$Tx=EF\left[x;0\right],\qquad x\in\C^{\,n}.$$
Taking $Q=P$ in \eqref{sasplit} we see that $Tx^{0}=y^{0}$. Moreover,
\begin{align*}\kfun_\beta\left(t,x\right)-\kfun_\alpha\left(t,Tx\right)&=\sum_{i=1}^n|x_i|^{\,2}\frac {\beta_i}{t+\beta_i}-
\sum_{i=1}^n|y_i|^{\,2}\frac {\alpha_i}{t+\alpha_i}\\
&\ge \sum_{i=1}^n|x_i|^{\,2}\frac {\beta_i}{t+\beta_i}-\sum_{i=1}^n|y_i|^{\,2}\frac {\alpha_i}{t+\alpha_i}
-\sum_{j=1}^{m-1}|y_i'|^{\,2}\frac {\gamma_i}{t+\gamma_i}\\
&=\kfun_{\beta\oplus\delta}\left(t,\left[x; 0\right]\right)-\kfun_{\alpha\oplus\gamma}\left(t,F\left[x; 0\right]\right)=\frac{|\,Q(t)\,|^{\,2}}
{L(t)P(t)}.\end{align*}
Since the right-hand side is non-negative, we have shown that
$$\kfun_\alpha\left(t,Tx\right)\le \kfun_\beta(t,x),\quad t>0,\, x\in\C^n,$$
as desired.
The proof of Theorem \ref{mlem} is finished. q.e.d.

\subsection{Real scalars} \label{rescrem} Theorem \ref{mlem} holds also in the case of Euclidean
spaces over the real scalar field.
To see this, assume without loss of generality that the vectors
$x^{0},y^{0}\in\C^n$
have \textit{real entries} (still satisfying $\kfun_\lambda\left(t,y^{0}\right)\le \kfun_\lambda\left(t,x^{0}\right)$ for all $t>0$).

By Theorem \ref{mlem} we can find a (complex) contraction $T$ of $\overline{\ell_2^n}(\lambda)$
such that $Tx^{0}=y^{0}$. It is clear that the operator $T^{\,*}$ defined by
$T^{\,*}x=\overline{T\left(\bar{x}\right)}$ satisfies those same conditions. Replacing
$T$ by $\frac 1 2\left(T+T^{\,*}\right)$ we obtain a real matrix $T\in M_n(\R)$, which is a contraction
of $\overline{\ell_2^n}(\lambda)$
and maps $x^{0}$ to $y^{0}$. $\qed$

\subsection{Explicit representations} We here
deduce an explicit representation for the operator $T$ constructed above.

Let $x^{0}$ and $y^{0}$ be two non-negative vectors such that
$$\kfun_\lambda\left(t,y^{0}\right)\le \kfun_\lambda\left(t,x^{0}\right),\qquad t>0.$$
For small $\rho>0$ we perturb $x^{0}$, $y^{0}$ slightly to vectors $\tilde{x}^{\, 0}$, $\tilde{y}^{0}$ which satisfy
the conditions imposed the previous subsections. We can then construct a matrix $T=T_\rho$
such that
\begin{equation}\label{app}T \tilde{x}^{\, 0}=\tilde{y}^{\, 0}\quad \text{and}\quad \kfun_\alpha\left(t,Tx\right)\le \kfun_\beta\left(t,x\right),\qquad t>0,\, x\in\C^{\,n},\end{equation}
where $\beta=\lambda$ and $\alpha=\rho\lambda$. As $\rho$, $\tilde{x}^{0}$, $\tilde{y}^{\, 0}$ approaches
$1$, $x^{0}$, resp. $y^{0}$, it is clear that any cluster point $T$ of the set of contractions $T_\rho$ will satisfy
$$Tx^{0}=y^{0}\quad \text{and}\quad \kfun_\lambda\left(t,Tx\right)\le \kfun_\lambda\left(t,x\right),\quad t>0,\, x\in\C^{\,n}.$$

\begin{thm} \label{comp} The matrix $T=T_\vro=\left(\tau_{ik}\right)_{i,k=1}^{n}$ where
\begin{equation}\label{calc}
\tau_{ik}=\re\left[\frac 1 {\alpha_i-\beta_k}\frac {\tilde{x}_k^{0}}{\tilde{y}_i^{0}}
\frac {\beta_k L_\delta(-\alpha_i)L_c(-\alpha_i)L_\alpha(-\beta_k)}
{\alpha_iL_\delta(-\beta_k)L_c(-\beta_k)L_\alpha'(-\alpha_i)}\right]
\end{equation}
satisfies \eqref{app}.
\end{thm}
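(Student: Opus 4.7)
\medskip

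The plan is to unwind the construction $Tx=EF[x;0]=EN M[x;0]$ from the previous subsection, making explicit each of the two maps $M$ and $N$ by Lagrange interpolation, and then simplifying with the factorization $L=L_\alpha L_\beta$.

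\textbf{Step 1: the image $Q=M[x;0]$ lies in $L_\delta L_c\calP_{n-1}$.} Setting $x'=0$ in \eqref{zwolf} forces $Q(-\delta_j)=0$ for $j=1,\dots,m$, so $L_\delta$ divides $Q$. Combined with $Q\in U=L_c\calP_{n+m-1}$ and a dimension count (the degree is at most $2n-1$), one obtains $Q=L_\delta L_c r$ for a unique $r\in\calP_{n-1}$. Solving \eqref{zwolf} for $Q(-\beta_k)$ and using the normalisation \eqref{tuck}, namely $P(-\beta_k)=(\tilde x^0_k)^2\beta_k L'(-\beta_k)$, together with $L'(-\beta_k)>0$, one obtains
\[
 r(-\beta_k)=\frac{Q(-\beta_k)}{L_\delta(-\beta_k)L_c(-\beta_k)}=\frac{x_k\,\beta_k L'(-\beta_k)\,\tilde x_k^{\,0}}{L_\delta(-\beta_k)L_c(-\beta_k)}.
\]

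\textbf{Step 2: reconstruct $r$ by Lagrange interpolation.} Since $r\in\calP_{n-1}$ is determined by its values at $\{-\beta_k\}_1^n$,
\[
 r(t)=\sum_{k=1}^{n}r(-\beta_k)\,\frac{L_\beta(t)}{(t+\beta_k)L_\beta'(-\beta_k)},
\]
and therefore $Q(t)=L_\delta(t)L_c(t)r(t)$ can be written explicitly. Evaluating at $t=-\alpha_i$ gives
\[
 Q(-\alpha_i)=L_\delta(-\alpha_i)L_c(-\alpha_i)L_\beta(-\alpha_i)\sum_{k=1}^n\frac{x_k\,\beta_k L'(-\beta_k)\tilde x_k^{\,0}}{L_\delta(-\beta_k)L_c(-\beta_k)(-\alpha_i+\beta_k)L_\beta'(-\beta_k)}.
\]

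\textbf{Step 3: apply $N$ and simplify.} From \eqref{dreiz} and \eqref{tuck}, noting $P(-\alpha_i)=-(\tilde y_i^{\,0})^2\alpha_i L'(-\alpha_i)$ with $L'(-\alpha_i)<0$, the positive square root yields $\sqrt{-\alpha_iL'(-\alpha_i)P(-\alpha_i)}=-\alpha_iL'(-\alpha_i)\tilde y_i^{\,0}$, hence $y_i=Q(-\alpha_i)/(-\alpha_iL'(-\alpha_i)\tilde y_i^{\,0})$. Differentiating the factorisation $L=L_\alpha L_\beta$ at the points $-\alpha_i$ and $-\beta_k$ gives the crucial cancellations
\[
 L'(-\alpha_i)=L_\alpha'(-\alpha_i)L_\beta(-\alpha_i),\qquad L'(-\beta_k)=L_\alpha(-\beta_k)L_\beta'(-\beta_k).
\]
Substituting these into the expression for $y_i$, the factors $L_\beta(-\alpha_i)$ and $L_\beta'(-\beta_k)$ cancel and one is left with
\[
 y_i=\sum_{k=1}^n\tau_{ik}^{\,\C}x_k,\qquad \tau_{ik}^{\,\C}=\frac{1}{\alpha_i-\beta_k}\cdot\frac{\tilde x_k^{\,0}}{\tilde y_i^{\,0}}\cdot\frac{\beta_k\,L_\delta(-\alpha_i)L_c(-\alpha_i)L_\alpha(-\beta_k)}{\alpha_i\,L_\delta(-\beta_k)L_c(-\beta_k)L_\alpha'(-\alpha_i)},
\]
where the sign arising from $1/(-\alpha_i)(-\alpha_i+\beta_k)=1/\alpha_i(\alpha_i-\beta_k)$ is tracked carefully.

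\textbf{Step 4: passage to real scalars.} The matrix $\tau_{ik}^{\,\C}$ is in general complex because $L_c(-\alpha_i)$ and $L_c(-\beta_k)$ are. Since $\tilde x^{\,0},\tilde y^{\,0}$ are real (by the reduction in Section~\ref{tml}) and $\kfun_\lambda$ is invariant under entrywise complex conjugation, the operator $T^*$ of Section~\ref{rescrem} has matrix $(\overline{\tau_{ik}^{\,\C}})$, and the real contraction $\tfrac12(T+T^*)$ has matrix $\re\,\tau_{ik}^{\,\C}$, which is precisely \eqref{calc}. This matrix satisfies $T\tilde x^{\,0}=\tilde y^{\,0}$ and \eqref{app} by the argument of the previous subsection applied to $F=NM$.

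The main obstacle is purely bookkeeping: keeping track of the positive square roots, the signs of $L'(-\alpha_i)$ and $L'(-\beta_k)$ supplied by \eqref{line}, and correctly factoring $L'$ at the two kinds of nodes so that the $L_\beta$-contributions cancel cleanly and only the $L_\alpha$-factors remain in the final formula.
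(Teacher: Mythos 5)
Your proposal is correct and follows essentially the same route as the paper's proof. The paper introduces a Lagrange basis $(Q_k)_{k=1}^n$ for the subspace $V=L_\delta L_c\cdot\calP_{n-1}$, with each $Q_k$ normalized so that its $x$-coordinates from \eqref{zwolf} form a Kronecker delta, and then reads off $\tau_{ik}=(Te_k)_i$ from \eqref{dreiz}; your Step 1--2 carries out the exact same Lagrange interpolation of a general $r\in\calP_{n-1}$ at the nodes $\{-\beta_k\}$, and your Steps 3--4 perform the same factorization $L=L_\alpha L_\beta$, the same sign-tracking from \eqref{line} and \eqref{tuck}, and the same symmetrization $\frac12(T+T^*)$ from \textsection\,3.7 to land on the real part.
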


\begin{proof} The range of the map $\C^{\,n}\to U$, $x\mapsto M\left[x;0\right]$ (see \ref{L1}) is precisely the $n$-dimensional subspace
\begin{equation}\label{tsb}V:=L_\delta L_c\cdot\calP_{n-1}=\{L_\delta L_cR;\, R\in \calP_{n-1}\}\subset U.\end{equation}
We introduce a basis $\left(Q_k\right)_{k=1}^n$ for $V$ by
$$Q_k(t)=\frac {L_\delta(t)L_c(t)L_\beta(t)}{t+\beta_k}
\frac {\sqrt{\beta_kL'(-\beta_k)P(-\beta_k)}}{L_\delta(-\beta_k)L_c(-\beta_k)L_\beta'(-\beta_k)}.$$
Then
$$\frac {Q_k(-\beta_i)} {\sqrt{\beta_iL'(-\beta_i)P(-\beta_i)}}=\begin{cases} 1 & i=k,\cr
0 & i\ne k.\cr
\end{cases}$$
Denoting by $(e_i)$ the canonical basis in $\C^{\,n}$ and using \eqref{zwolf}, \eqref{dreiz} we get
\begin{align*}
\tau_{ik}&=(Te_k)_i=\frac {Q_k(-\alpha_i)} {\sqrt{\alpha_iL'(-\alpha_i)P(-\alpha_i)}}\\
&=\frac 1 {\beta_k-\alpha_i}\frac  {L_\delta(-\alpha_i)L_c(-\alpha_i)L_\beta(-\alpha_i)}
 {L_\delta(-\beta_k)L_c(-\beta_k)L_\beta'(-\beta_k)}\left(
 \frac {\beta_kL'(-\beta_k)P(-\beta_k)} {-\alpha_iL'(-\alpha_i)P(-\alpha_i)}\right)^{1/2}.
 \end{align*}
 Inserting the expressions \eqref{tuck} for $P(-\alpha_i)$ and $P(-\beta_k)$ and taking real
 parts (see the remarks in \textsection \ref{rescrem}), we obtain the formula \eqref{calc}.
 \end{proof}

\begin{rem} It is easy to see that, if we pick all matrix-elements real, some elements $\tau_{ik}$ of the matrix $T$ in \eqref{calc}
will be negative, even while the numbers $x^0_i$ and $y^0_k$ are positive. It was proved in \cite{A2},
Theorem 2.3, that this is necessarily so. Indeed, one there constructs an example of a five-dimensional
couple $\overline{\ell_2^{\,5}}(\lambda)$ and two vectors $x^0,y^0\in\R^5$ having non-negative entries
such that \textit{no} contraction $T=\left(\tau_{ik}\right)_{i,k=1}^5$ on $\overline{\ell_2^{\,5}}(\lambda)$
having all matrix entries $\tau_{ik}\ge 0$ can satisfy $Tx^0=y^0$. On the other hand, if one settles for
using a matrix with $\left\|\, T\,\right\|\le\sqrt{2}$, then it is possible to find one with
only non-negative matrix entries. Indeed, such a matrix was used by Sedaev \cite{Sed}, see also \cite{Sp2}.
\end{rem}

%\begin{rem} The formula \eqref{calc} for the matrix $T$ shows that a version of Theorem \ref{mlem} holds also over the rational
%scalar field $\Q$.
%\end{rem}

\subsection{On sharpness of the norm-bounds}We shall show that if $m<n$ (i.e. if the polynomial $P$ has at least one non-real zero),
then the norm $\left\|\,T\,\right\|_{\,\bo\left(\calH_i\right)}$ of the contraction $T$ constructed above is very close to $1$ for $i=0,1$.
%The norm bounds given by Theorem \ref{mlem} are in this sense "sharp".

 We first claim that $\left\|\,T\,\right\|_{\,\bo\left(\calH_0\right)}= 1$. To see this, we notice that if $m<n$, then there is a non-trivial polynomial $Q^{(1)}$ in the space $V$ (see \eqref{tsb}) which vanishes
at the points $0,\gamma_1,\ldots,\gamma_{m-1}$. If $x_i^{(1)}$ and $y_i^{(1)}$ are defined by the formulas \eqref{zwolf} and \eqref{dreiz}
(while $(x_j^{(1)})'=(y_k^{(1)})'=0$), we then have $Tx^{(1)}=y^{(1)}$ and
$$\kfun_\beta(t,x^{(1)})-\kfun_\alpha(t,y^{(1)})=\frac{|\,Q^{(1)}(t)\,|^{\,2}}{L(t)P(t)},\qquad t>0.$$
Choosing $t=0$ we conclude that $\|\,x^{(1)}\,\|_{\ell_2^n}^{\,2}-\|\,Tx^{(1)}\,\|_{\ell_2^n}^{\,2}= 0$, whence $\left\|\,T\,\right\|_{\,\bo\left(\calH_0\right)}\ge 1$, proving our claim.

Similarly, the condition $m<n$ implies the existence of a polynomial $Q^{(2)}\in V$ of degree at most $n+m-2$ vanishing at the points $\gamma_1,\ldots,\gamma_{m-1}$.
Constructing vectors $x^{(2)}$, $y^{(2)}$ via \eqref{zwolf} and \eqref{dreiz} we will have $Tx^{(2)}=y^{(2)}$ and
$$\kfun_\beta(t,x^{(2)})-\kfun_\alpha(t,y^{(2)})=\frac{|\,Q^{(2)}(t)\,|^{\,2}}{L(t)P(t)},\qquad t>0.$$
Multiplying this relation by $t$ and then sending $t\to\infty$, we find that $\|\,x^{(2)}\,\|_{\ell_2^n(\beta)}^{\,2}-\|\,Tx^{(2)}\,\|_{\ell_2^n(\alpha)}^{\,2}=0$,
which implies $\left\|\,T\,\right\|_{\,\bo(\calH_1)}\ge \rho^{-1/2}$.

\subsection{A remark on weighted $\ell_p$-couples} \label{remu} As far as we are aware, if $1<p<\infty$ and $p\ne 2$, it is still an open question whether the couple $\overline{\ell_p^n}(\lambda)
 =\left(\ell_p^n,\ell_p^n(\lambda)\right)$
 is an exact Calderón couple or not. (When $p=1$ or $p=\infty$ it is
exact Calderón; see \cite{SS} for the case $p=1$; the case $p=\infty$ is essentially just the
Hahn-Banach theorem.)

It is well known, and easy to prove, that the $K_p$-functional (see \eqref{rem?})
corresponding to the couple $\overline{\ell_p^n}(\lambda)$ is given by the explicit formula
$$K_p\left(t,x;\overline{\ell_p^n}(\lambda)\right)
=\sum_{i=1}^n|x_i|^{\,p}\frac {t\lambda_i}{(1+(t\lambda_i)^{\frac 1 {p-1}})^{p-1}}.$$
It was proved by Sedaev \cite{Sed} (cf. \cite{Sp2}) that if
$K_p\left(t,y^0;\, \overline{\ell_p^n}(\lambda)\right)\le K_p\left(t,x^0;\overline{\ell_p^n}(\lambda)\right)$ for all $t>0$
then there is $T:\overline{\ell_p^n}(\lambda)\to \overline{\ell_p^n}(\lambda)$ of norm
at most $2^{1/p'}$ such that $Tx^0=y^0$. (Here $p'$ is the exponent conjugate to $p$.)

Although our present estimates are particular for the case $p=2$,
our construction still shows that,
if we re-define $P(t)$ to be the polynomial
 \begin{equation}\label{pai}\frac {P(t)}{L(t)}=\sum_1^n(\tilde{x}_i^{0})^{\,p}\frac {\beta_i}{t+\beta_i}-\sum_1^n(\tilde{y}_i^{0})^{\,p}
 \frac {\alpha_i}{t+\alpha_i},\end{equation}
 then the matrix $T$ defined by
\begin{equation}\label{lp}\tau_{ik}=\re\left[\frac 1 {\alpha_i-\beta_k}\frac {(\tilde{x}_k^{0})^{p-1}}{(\tilde{y}_i^{0})^{p-1}}
\frac {\beta_k L_\delta(-\alpha_i)L_c(-\alpha_i)L_\alpha(-\beta_k)}
{\alpha_iL_\delta(-\beta_k)L_c(-\beta_k)L_\alpha'(-\alpha_i)}\right]\end{equation}
will satisfy $T\tilde{x}^{0}=\tilde{y}^{0}$, at least, provided that $P(t)>0$ when $t\ge 0$.
(Here
$L_\delta$ and $L_c$ are constructed from the zeros of $P$ as in the case $p=2$.)

The matrix \eqref{lp} differs from those used by Sedaev \cite{Sed} and
Sparr \cite{Sp2}.
Indeed the matrices from \cite{Sed,Sp2} have \textit{non-negative entries}, while this is not so for the matrices \eqref{lp}.
It seems to be an interesting problem to estimate the norm $\left\|\,T\,\right\|_{\,\bo(\overline{\ell_p^n}(\lambda))}$
for the matrix \eqref{lp}, when $p\ne 2$. The motivation for this type of question is somewhat elaborated in
\textsection\ref{ointp}, but we shall not discuss it further here.

\subsection{A comparison with L\"owner's matrix} \label{compa} In this subsection, we briefly explain
 how our matrix $T$
is related to the matrix used by L\"owner \cite{L} in his original work on monotone matrix functions.
(\footnote{By "L\"owner's matrix", we mean the unitary matrix denoted "$V$" in Donoghue's
book \cite{D0}, on p. 71. A more explicit construction of
this matrix is found in \cite{L}, where it is called "$T$".})

We shall presently display four kinds of partial isometries; L\"owner's matrix will be recognized as one of them. In all cases, operators with the required
properties can alternatively be found using the more general construction in Theorem \ref{mlem}.

The following discussion was inspired by the earlier work of Sparr \cite{Sp},
who seems to have been the first to note that L\"owner's matrix could be constructed in a similar way.

In this subsection, scalars are assumed to be real. In particular, when we write "$\ell_2^n$" we mean
the (real) Euclidean $n$-dimensional space.

Suppose that two vectors $x^{0}, y^{0}\in\R^n$ satisfy
$$\kfun_\lambda\left(t,y^{0}\right)\le \kfun_\lambda\left(t,x^{0}\right),\qquad t>0.$$
Let
$$L_\lambda(t)=\prod_1^n\left(t+\lambda_i\right),$$
and let $P\in\calP_{n-1}$ be the polynomial fulfilling
$$\frac {P(t)} {L_\lambda(t)}=\kfun_\lambda\left(t,x^{0}\right)-\kfun_\lambda\left(t,y^{0}\right)=
\sum_{i=1}^n \frac {\lambda_i}{t+\lambda_i}\left[(x_i^0)^{\,2}-(y_i^0)^{\,2}\right].$$
By assumption, $P(t)\ge 0$ for $t\ge 0$.
Moreover, $P$ is uniquely determined by the $n$ conditions
$$P(-\lambda_i)=\frac {(x_i^0)^{\,2}-(y_i^0)^{\,2}}{\lambda_iL_\lambda'(-\lambda_i)}.$$
Let $u_1,v_1,u_2,v_2,\ldots$ denote the canonical basis of $\ell_2^n$ and let
$$\ell_2^n=O\oplus E$$
be the corresponding splitting, i.e.,
$$O=\spa\,\{u_i\}\quad ,\quad E=\spa\,\{v_i\}.$$
Notice that
$$\dim O=\lfloor(n-1)/2\rfloor+1\quad,\quad \dim E=\lfloor(n-2)/2\rfloor+1,$$
where $\lfloor x\rfloor$ is the integer part of a real number $x$.

We shall construct matrices $T\in M_n(\R)$ such that
\begin{equation}\label{A1}Tx^{0}=y^{0}\quad\text{and}\quad \kfun_\lambda\left(t,Tx\right)\le
\kfun_\lambda(t,x),\quad t>0,\,x\in\mathbf{R}^n,\end{equation}
in the following special cases:
\begin{enumerate}
\item \label{case1} $P(t)=q(t)^2$ where $q\in\calP_{(n-1)/2}(\R)$, $x^{0}\in O$, and $y^{0}\in E$,
\item \label{case2} $P(t)=tq(t)^2$ where $q\in\calP_{(n-2)/2}(\R)$, $x^{0}\in E$, and $y^{0}\in O$.
\end{enumerate}
Here $\calP_x$ should be interpreted as $\calP_{\lfloor x\rfloor}$.

\begin{rem} In this connection, it is interesting to recall the well-known fact that any polynomial $P$
which is non-negative on $\mathbf{R}_+$ can be written $P(t)=q_0(t)^2+tq_1(t)^2$
for some real polynomials $q_0$ and $q_1$.
\end{rem}

To proceed with the solution, we rename the $\lambda_i$ as $\lambda_i=\xi_i$ when $i$ is odd
and $\lambda_i=\eta_i$ when $i$ is even. We also write
$$L_\xi(t)=\prod_{i\,\text{odd}}(t+\xi_i)\quad ,\quad L_\eta(t)=\prod_{i\,\text{even}}(t+\eta_i),$$
and write $L=L_\xi L_\eta.$ Notice that $L_\lambda'(-\xi_i)>0$ and $L_\lambda'(-\eta_i)<0$.

\subsubsection*{Case 1} Suppose that $P(t)=q(t)^2$, $q\in \calP_{(n-1)/2}(\R)$, $x^{0}\in O$, and $y^{0}\in E$.
Then
$$\frac {q(t)^2}{L_\lambda(t)}=\sum_{k\,\text{odd}}\frac {\xi_k} {t+\xi_k}
(x_k^0)^{\,2}-\sum_{i\, \text{even}}\frac {\eta_i} {t+\eta_i}(y_i^0)^{\,2},$$
where
\begin{equation}\label{1m}x_k^0=\frac {\eps_kq(-\xi_k)}{\sqrt{\xi_k L_\lambda'(-\xi_k)}}
\quad ,\quad y_i^0=\frac {\zeta_iq(-\eta_i)}{\sqrt{-\eta_i L_\lambda'(-\eta_i)}}
\end{equation}
for some choice of signs $\eps_k,\zeta_i\in\{\pm1\}$.

By \eqref{1m} are defined linear maps
$$O\to \calP_{(n-1)/2}(\R)\quad :\quad x\mapsto Q\quad ;\quad \calP_{(n-1)/2}(\R)\to E\quad
:\quad Q\mapsto y.$$
The composition is a linear map
$$T_0:O\to E\quad :\quad x\mapsto y.$$
We now define $T\in M_n(\R)$ by
$$T:O\oplus E\to O\oplus E\quad :\quad [x; v]\mapsto [0; T_0 x].$$
Then clearly $Tx^0=y^0$ and
\begin{equation}\label{A3}\begin{split}
\kfun_\lambda\left(t,[x; v]\right)&-\kfun_\lambda\left(t,T[x; v]\right)\\
&\ge \kfun_\xi(t,x)-\kfun_\eta\left(t,T_0x\right)\\
&=\frac {Q(t)^2} {L_\lambda(t)}\ge 0,\quad t>0,\, x\in O,\,v\in E.\\
\end{split}
\end{equation}
We have verified \eqref{A1} in case 1. A computation similar to the one in the proof
of Theorem \ref{comp} shows that, with respect to the bases $u_k$ and $v_i$,
$$(T_0)_{ik}=\frac {\eps_k\zeta_i}{\xi_k-\eta_i}\frac {L_\xi(-\eta_i)}{L_\xi'(-\xi_k)}
\left(\frac {\xi_k L_\xi'(-\xi_k)L_\eta(-\xi_k)}
{-\eta_iL_\xi(-\eta_i)L_\eta'(-\eta_i)}\right)^{1/2}.$$

Notice that, multiplying \eqref{A3} by $t$, then letting $t\to\infty$ implies that
$$\sum_{k\, \text{odd}}x_k^2\xi_k-\sum_{i\,\text{even}}(T_0x)_i^2\eta_i=0.$$
This means that $T$ \textit{is a partial isometry from $O$ to $E$ with respect to the norm
of $\ell_2^n(\lambda)$}.

\subsubsection*{Case 2.} Now assume that $P(t)=tq(t)^2$, $q\in\calP_{(n-2)/2}(\R)$, $x^0\in E$, and
$y^0\in O$. Then
$$\frac {tq(t)^2}{L_\lambda(t)}=-\sum_{i\,\text{odd}}(y_i^0)^2\frac {\xi_i}{t+\xi_i}
+\sum_{k\,\text{even}}\frac {\eta_k}{t+\eta_k}(x_k^0)^2,$$
where
\begin{equation}\label{A4}y_i^0=\frac {\eps_i'q(-\xi_i)}{\sqrt{L_\lambda'(-\xi_i)}}
\quad ,\quad x_k^0=\frac {-\zeta_k'q(-\eta_k)}{\sqrt{-L_\lambda'(-\eta_k)}}\end{equation}
for some $\eps_i',\zeta_k'\in\{\pm 1\}$.

By \eqref{A4} are defined linear maps
$$E\to\calP_{(n-2)/2}(\R)\quad :\quad x\mapsto Q\quad ;\quad \calP_{(n-2)/2}(\R)\to O\quad
:\quad Q\mapsto y.$$
We denote their composite by
$$T_1:E\to O\quad :\quad x\mapsto y.$$
Define $T\in M_n(\R)$ by
$$T:O\oplus E\to O\oplus E\quad :\quad [u; x]\mapsto \left[T_1x; 0\right].$$
We then have
\begin{equation}\label{A6}\begin{split}
-\kfun_\lambda\left(t,T[u; x]\right)&+\kfun_\lambda\left(t,[u; x]\right)\\
&\ge -\kfun_\xi\left(t,T_1x\right)+\kfun_\eta(t,x)\\
&=\frac {tQ(t)^2}{L_\lambda(t)}\ge 0,\quad t>0,\, u\in O,\, x\in E,\\
\end{split}
\end{equation}
and \eqref{A1} is verified also in case \ref{case2}.

A computation shows that, with respect to the bases $v_k$ and $u_i$,
$$(T_1)_{ik}=\frac {\eps_i'\zeta_k'}{\eta_k-\xi_i}\frac {L_\eta(-\xi_i)}
{L_\eta'(-\eta_k)}\left(
\frac {-L_\xi(-\eta_k)L'_\eta(-\eta_k)} {L_\xi'(-\xi_i)L_\eta(-\xi_i)}
\right)^{1/2}.$$
Inserting $t=0$ in \eqref{A6} we find that
$$-\sum_{i\, \text{odd}}(T_1x)_i^2+\sum_{k\,\text{even}}(x_k)^2=0,$$
i.e., $T$ \textit{is a partial isometry form $E$ to $O$ with respect to the norm
of $\ell_2^n$}.

\medskip

In the case of even $n$, the matrix $T_1$ coincides with L\"owner's matrix.

\section{Quadratic interpolation spaces}

\subsection{A classification of quadratic interpolation spaces} \label{quadro}
Recall that an intermediate space $X$ with respect to $\overline{\calH}$ is said to be
of \textit{type} $H$ if $\left\|\,T\,\right\|_{\,\bo\left(\calH_i\right)}\le M_i$ for $i=0,1$ implies that
$\left\|\,T\,\right\|_{\,\bo(X)}\le H\left(M_0,M_1\right)$. We shall henceforth make a mild restriction,
and assume that $H$ be
homogeneous of degree one. This means that we can write
\begin{equation}\label{hom1}H(s,t)^{\,2}=s^{\,2}\,\typeH(t^{\,2}/s^{\,2})\end{equation}
for some function $\typeH$ of one positive variable. In this situation, we will say that $X$ is of type $\typeH$.
The definition is chosen so that the estimates $\left\|\,T\,\right\|_{\,\bo\left(\calH_i\right)}^{\,2}\le M_i$ for $i=0,1$
imply $\left\|\,T\,\right\|_{\,\bo(X)}^{\,2}\le M_0\,\typeH\left(M_1/M_0\right)$.

In the following we will make the \textit{standing assumptions}: $\typeH$ is an increasing, continuous, and positive function on $\mathbf{R}_+$ with
$\typeH(1)=1$ and $\typeH(t)\le\max\{1,t\}$.

Notice that our assumptions imply that all spaces of type $\typeH$ are exact interpolation.
Note also that $\typeH(t)=t^{\,\theta}$ corresponds to geometric interpolation of exponent $\theta$.

Suppose now that $\overline{\calH}$ is a regular Hilbert couple and that
$\calH_*$ is an exact interpolation space
with corresponding operator $B$. By Donoghue's lemma, we have that $B=h(A)$ for some positive Borel function
$h$ on $\sigma(A)$.

The statement that $\calH_*$ is intermediate relative to
$\overline{\calH}$
is equivalent to that
\begin{equation}\label{intermed}c_1\frac A {1+A}\le B\le c_2(1+A)\end{equation}
for some positive numbers $c_1$ and $c_2$.

Let us momentarily assume that $\calH_0$ be \textit{separable}.
(This restriction is removed in Remark \ref{nonsep}.)
We can then define the \textit{scalar-valued spectral measure} of $A$,
$$\nu_A(\omega)=\sum 2^{-k}\left\langle E(\omega)e_k,e_k\right\rangle_0$$
where $E$ is the spectral measure of $A$, $\{e_k;\, k=1,2,\ldots\}$ is an orthonormal basis for $\calH_0$, and $\omega$ is a Borel set.
Then, for Borel functions $h_0,h_1$ on $\sigma(A)$, one has that
$h_1=h_2$ almost everywhere with respect to $\nu_A$ if and only if $h_1(A)=h_2(A)$.

Note that the regularity of
$\overline{\calH}$ means that $\nu_A(\{0\})=0$.

\begin{mth} \label{pcomb} If $\calH_*$ is of type $\typeH$ with respect to $\overline{\calH}$, then
$B=h(A)$ where
the function $h$
can be modified on a null-set with respect to $\nu_A$
so that
\begin{equation}\label{H-conc}h(\lambda)/h(\mu)\le \typeH\left(\lambda/\mu\right),\quad\lambda,\mu\in
\sigma(A)\setminus\{0\}.\end{equation}
\end{mth}

\begin{proof} Fix a (large) compact subset $K\subset \sigma(A)\cap\mathbf{R}_+$
and put $\calH_0'=\calH_1'=E_K(\calH_0)$ where $E$ is the spectral measure of $A$, and the norms
are defined by restriction,
$$\left\|\,x\,\right\|_{\calH_i'}=\left\|\,x\,\right\|_{\calH_i}\quad , \quad \left\|\,x\,\right\|_{\calH_*'}=\left\|\,x\,\right\|_{\calH_*}\quad ,\quad
x\in E_K\left(\calH_0\right).$$
It is clear that the operator $A'$ corresponding to $\overline{\calH'}$ is the compression
of $A$ to $\calH_0'$ and likewise the operator $B'$ corresponding to $\calH_*'$
is the compression of $B$ to $\calH_0'$. Moreover, $\calH_*'$ is of interpolation type $\typeH$ with
respect to $\overline{\calH'}$ and the operator $B'=\left(h|_K\right)(A')$.
For this reason, and since the compact set $K$
is arbitrary, it clearly suffices to prove the statement with $\overline{\calH}$ replaced by $\overline{\calH'}$.
Then $A$ is bounded above and below. Moreover, by \eqref{intermed}, also $B$ is bounded above and below.

Let $c<1$ be a positive number such that $\sigma(A)\subset\left(c,c^{-1}\right)$. For a fixed $\eps>0$
with $\eps<c/2$ we set
$$E_\lambda= \sigma(A)\cap(\lambda-\eps,\lambda+\eps)$$
and consider the functions
\begin{align*}m_\eps(\lambda)=\essinf_{E_\lambda} h,\qquad
M_\eps(\lambda)=\esssup_{E_\lambda} h,
\end{align*}
the essential inf and sup being taken with respect to $\nu_A$.

Now fix a small positive number $\eps'$
and two unit vectors
$e_\lambda, e_\mu$ supported by $E_\lambda,E_\mu$ respectively, such that
$$\left\|\, e_\lambda\,\right\|_*^{\,2}\ge M_\eps(\lambda)-\eps',\qquad \left\|\,e_\mu\,\right\|_*^{\,2}
\le m_\eps(\mu)+\eps'.$$

Now fix $\lambda,\mu\in\sigma(A)$ and let $Tx=\left\langle x,e_\mu\right\rangle_0e_\lambda$. Then
\begin{align*}\left\|\,Tx\,\right\|_1^{\,2}&=\left|\left\langle x,e_\mu\right\rangle_0\right|^{\,2}\left\|\,e_\lambda\,\right\|_1^{\,2}\le \frac 1 {(\mu-\eps)^{\,2}}\left|\left\langle x,e_\mu\right\rangle_1\right|^{\,2}(\lambda+\eps)\\
&\le
\frac {(\mu+\eps)(\lambda+\eps)} {(\mu-\eps)^2}\left\|\,x\,\right\|_1^{\,2}.
\end{align*}
Likewise,
$$\left\|\,Tx\,\right\|_0^{\,2}\le \left|\left\langle x,e_\mu\right\rangle_0\right|^{\,2}\le \left\|\,x\,\right\|_0^{\,2},$$
so $\left\|\,T\,\right\|\le 1$ and $\left\|\,T\,\right\|_A^{\,2}\le \alpha_{\mu,\lambda,\eps}$ where
$\alpha_{\mu,\lambda,\eps}=\frac {(\mu+\eps)(\lambda+\eps)}{(\mu-\eps)^2}$.
% (cf. \textsection \ref{donolemm}
%for the notation $\left\|\,T\,\right\|_A$).

Since $\calH_*$ is of type $\typeH$,
we conclude that
$$\left\|\,T\,\right\|_B^{\,2}\le \typeH\left(\alpha_{\mu,\lambda,\eps}\right),$$ whence
\begin{equation}\label{meps}
\begin{split}
M_\eps(\lambda)-\eps'&\le \left\|\,e_\lambda\,\right\|_*^{\,2}=\left\|\,Te_\mu\,\right\|_*^{\,2}\le \typeH\left(\alpha_{\mu,\lambda,\eps}\right)\left\|\,e_\mu\,\right\|_*^{\,2}\\
&\le \typeH\left(\alpha_{\mu,\lambda,\eps}\right)\left(m_\eps(\mu)+\eps'\right).\\
\end{split}
\end{equation}
In particular, since $\eps'$ was arbitrary, and $m_\eps(\lambda)\le\left\|\,e_\lambda\,\right\|_*^{\,2}\le \left\|\,B\,\right\|$, we find that
$$M_\eps(\lambda)-m_\eps(\lambda)\le \left[\typeH\left(\alpha_{\mu,\lambda,\eps}\right)-1\right]
\left\|\,B\,\right\|.$$
 By assumption, $\typeH$ is continuous and $\typeH(1)=1$. Hence, as $\eps\downarrow 0$, the functions $M_\eps(\lambda)$ diminish
monotonically, converging uniformly to a function $h_*(\lambda)$ which is also the uniform limit of the
family $m_\eps(\lambda)$. It is clear that $h_*$ is continuous, and since $m_\eps\le h_*\le M_\eps$,
we have $h_*=h$ almost everywhere with respect to $\nu_A$. The relation \eqref{H-conc} now follows
for $h=h_*$ by letting $\eps$ and $\eps'$ tend to zero in \eqref{meps}.
\end{proof}

A partial converse to Theorem \ref{pcomb} is found below, see Theorem \ref{fth1}.

\begin{rem} \label{nonsep} (The non-separable case.) Now consider the case when $\calH_0$ is non-separable.
(By regularity this means that also $\calH_1$ and $\calH_*$ are non-separable.)

First assume that the operator $A$ is bounded.
Let $\calH_0'$ be a separable reducing subspace for $A$ such that the restriction $A'$ of $A$ to $\calH_0'$
has the same spectrum as $A$. The space $\calH_0'$ reduces $B$ by Donoghue's lemma; by Theorem \ref{pcomb}
the restriction $B'$ of $B$ to $\calH_0'$ satisfies $B'=h'(A')$ for some continuous function $h'$
satisfying \eqref{H-conc} on
$\sigma(A)$.
Let $\calH_0''$ be any other separable reducing subspace, where (as before) $B''=h''(A'')$.
Then $\calH_0'\oplus\calH_0''$ is a separable reducing subspace on which $B=h(A)$ for some
third continuous function $h$ on $\sigma(A)$. Then $h(A')\oplus h(A'')=h'(A')\oplus h''(A'')$
and by continuity we must have $h=h'=h''$ on $\sigma(A)$. The
function $h$ thus satisfies $B=h(A)$ as well as the estimate \eqref{H-conc}.

If $A$ is unbounded, we replace $A$ by its compression to $P_n\calH_0$ where $P_n$ is the
spectral projection of $A$ corresponding to the spectral set $[0,n]\cap\sigma(A)$, $n=1,2,\ldots$. The same reasoning as above
shows that $B$ appears as a continuous function of $A$ on $\sigma(A)\cap[0,n]$. Since $n$ is arbitrary,
we find that $B=h(A)$ for a function $h$ satisfying \eqref{H-conc}.
\end{rem}

\subsection{Geometric interpolation}
Now consider the particular case when $\calH_*$ is of exponent $\theta$, viz. of type $\typeH(t)=t^{\,\theta}$
with respect to $\overline{\calH}$. We write $B=h(A)$ where $h$ is the continuous function
provided by Theorem \ref{pcomb} (and Remark \ref{nonsep} in the non-separable case).

Fix a point $\lambda_0\in\sigma(A)$ and let $C=h(\lambda_0)\lambda_0^{\,-\theta}$.
The estimate \eqref{H-conc} then implies that $h(\lambda)\le C\lambda^\theta$
and $h(\mu)\ge C\mu^\theta$ for all $\lambda,\mu\in\sigma(A)$. We have proved
the following theorem.
%which is found in McCarthy's paper \cite{Mc}, and
%in a more or less equivalent form, also in the paper \cite{U} of Uhlmann; see Remark \ref{urem}.

\begin{thm} \label{pthm} (\cite{Mc,U})
If $\calH_*$ is an exact interpolation Hilbert space
of exponent $\theta$ relative to $\overline{\calH}$, then $B=h(A)$ where
$h(\lambda)=C\lambda^{\,\theta}$ for some
positive constant $C$.
\end{thm}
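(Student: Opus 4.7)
The plan is to reduce this immediately to Theorem \ref{pcomb}. The assumption that $\calH_*$ is of exponent $\theta$ is exactly the condition that $\calH_*$ is of type $\typeH$ with $\typeH(t)=t^{\theta}$, and this $\typeH$ is increasing, continuous, positive, normalized by $\typeH(1)=1$, and satisfies $\typeH(t)\le\max\{1,t\}$ because $0<\theta<1$. So all the standing hypotheses for Theorem \ref{pcomb} are in force.

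Next, invoke Theorem \ref{pcomb} (together with Remark \ref{nonsep} if $\calH_0$ is non-separable) to write $B=h(A)$ where $h$ is (after modification on a $\nu_A$-null set) a continuous positive function on $\sigma(A)\setminus\{0\}$ satisfying the multiplicative estimate
\begin{equation*}
\frac{h(\lambda)}{h(\mu)}\le \left(\frac{\lambda}{\mu}\right)^{\theta},\qquad \lambda,\mu\in\sigma(A)\setminus\{0\}.
\end{equation*}

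The remaining step is the short algebraic observation that this inequality forces $h$ to be an exact power. Fix any $\lambda_0\in\sigma(A)\setminus\{0\}$ and define $C:=h(\lambda_0)\lambda_0^{-\theta}>0$. Applying the inequality with $\mu=\lambda_0$ gives $h(\lambda)\le C\lambda^{\theta}$ for every $\lambda\in\sigma(A)\setminus\{0\}$, while applying it with $\lambda=\lambda_0$ gives $h(\mu)\ge C\mu^{\theta}$ for every $\mu\in\sigma(A)\setminus\{0\}$. Combining these two inequalities yields $h(\lambda)=C\lambda^{\theta}$ on $\sigma(A)\setminus\{0\}$, which is what we wanted.

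There is essentially no hard step here: all the real work has already been done inside Theorem \ref{pcomb}, where the interpolation hypothesis was converted into the multiplicative estimate via the rank-one operators $Tx=\langle x,e_\mu\rangle_0 e_\lambda$. The only thing to watch is that $h$ is only determined $\nu_A$-a.e.\ a priori, so one must use the continuous representative supplied by Theorem \ref{pcomb} in order to evaluate $h$ at individual spectral points; by regularity, $0\notin\sigma(A)\setminus\{0\}$ matters is vacuous since $A$ is injective and $\nu_A(\{0\})=0$, so redefining $h$ at $0$ (if $0\in\sigma(A)$) by $h(0)=0$ causes no harm as $A$ is injective and $B^{1/2}$ is defined on $\Delta$ in the same way.
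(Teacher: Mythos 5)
Your proof is correct and follows essentially the same route as the paper: it applies Theorem \ref{pcomb} (and Remark \ref{nonsep}) with $\typeH(t)=t^{\theta}$, then fixes $\lambda_0\in\sigma(A)$, sets $C=h(\lambda_0)\lambda_0^{-\theta}$, and reads off the two one-sided inequalities from \eqref{H-conc} to pin down $h(\lambda)=C\lambda^{\theta}$. The only cosmetic difference is that you spell out more carefully the verification of the standing hypotheses on $\typeH$ and the use of the continuous representative; the mathematical content is the same.
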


Theorem \ref{pthm} says that $\calH_*=\calH_\theta$ up to a constant multiple of the norm, where
$\calH_\theta$ is the space defined in
\eqref{mc}.
In the guise of operator inequalities:
for any fixed positive operators $A$ and $B$, the condition
$${T}^*T\le M_0\quad ,\quad {T}^*AT\le M_1A\quad \Rightarrow\quad {T}^*BT\le M_0^{\,\,1-\theta}M_1^{\,\,\theta} B$$
is equivalent to that $B=A^{\,\theta}$.

It was observed in \cite{Mc} that $\calH_\theta$ also equals to the complex interpolation space
$C_\theta(\,\overline{\calH}\,)$. For the sake of completeness, we supply a short proof of this
fact in the appendix.
%At the same time we will there take the opportunity to briefly discuss a
%generalization of the geometric method, "complex interpolation with derivatives", which was considered by Fan in \cite{F}.
%Another generalization of Theorem \ref{pthm} is considered below, in Theorem \ref{macgen}.

\begin{rem} \label{urem} An exact quadratic interpolation method, the \textit{geometric mean} was introduced earlier by Pusz and Woronowicz \cite{PW} (it corresponds to the $C_{1/2}$-method).
In \cite{U}, Uhlmann generalized that method to a method (the \textit{quadratic mean}) denoted $\QI_t$ where $0<t<1$; this method is
quadratic and of exponent $t$.

In view of Theorem \ref{pthm} and the preceding remarks we can conclude that $\QI_\theta(\,\overline{\calH}\,)=C_\theta(\,\overline{\calH}\,)=
\calH_\theta$ for any regular Hilbert couple $\overline{\calH}$.
We refer to \cite{U} for several physically relevant applications of this type of interpolation.

Finally, we want to mention that in \cite{P4} Peetre introduces
the "Riesz method of interpolation"; in Section 5
he also defines a related method "$\QM$" which comes close to the
complex $C_{1/2}$-method.
\end{rem}

\subsection{Donoghue's theorem} The exact quadratic
interpolation spaces relative to a Hilbert couple were characterized by Donoghue in the paper \cite{D1}.
We shall here prove the following equivalent version of Donoghue's result (see \cite{A2,A3}).

\begin{mth} \label{dthm} An intermediate Hilbert space $\calH_*$ relative to $\overline{\calH}$ is an exact interpolation space
if and only if there is a positive radon measure $\vro$ on $[0,\infty]$ such that
$$\|\,x\,\|_*^{\,2}=\int_{[0,\infty]}\left(1+t^{-1}\right)K\left(t,x;\,\overline{\calH}\,\right)\, d\vro(t).$$
Equivalently, $\calH_*$ is exact interpolation relative to $\overline{\calH}$ if and only
if the corresponding operator $B$ can be represented as $B=h(A)$ for some function
$h\in P'$.
\end{mth}

The statements that all norms of the given form are exact quadratic interpolation norms have already been
shown (see \textsection\ref{mex}). There remains to prove that there are no others.

Donoghue's original
 formulation of the result, as well as other equivalent forms of the theorem, is found in Section \ref{repif}
below. Our present approach follows \cite{A2} and is based on
$K$-monotonicity.

\begin{rem}\label{qcrem}
The condition that $\calH_*$ be exact interpolation with respect to
$\overline{\calH}$ means that $\calH_*$ is of type $\typeH$ where $\typeH(t)=\max\{1,t\}$.
In view of Theorem \ref{pcomb} (and Remark \ref{nonsep}), this means that we can represent $B=h(A)$ where
$h$ is \textit{quasi-concave} on $\sigma(A)\setminus\{0\}$,
\begin{equation}\label{quasic}h(\lambda)\le h(\mu)\max\left\{1,\lambda/\mu\right\},\qquad \lambda,\mu\in\sigma(A)\setminus\{0\}.\end{equation}
In particular, $h$ is locally Lipschitzian on $\sigma(A)\cap \R_+$.
\end{rem}

\begin{rem} A related result concerning \textit{non-exact} quadratic interpolation was proved by
Ovchinnikov \cite{O} using Donoghue's theorem. Cf. also \cite{A5}.
\end{rem}

\subsection{The proof for simple finite-dimensional couples} \label{fslem} Similar to
our approach to Calderón's problem, our strategy is to reduce Theorem \ref{dthm} to a case of "simple couples''.

\begin{thm} \label{dono1} Assume that $\calH_0=\calH_0=\C^{\,n}$ as sets
and that
all eigenvalues $(\lambda_i)_1^n$ of the corresponding operator $A$ are of unit multiplicity.
Consider a third
 Hermitian norm $\|x\|_*^{\,2}=\left\langle Bx,x\right\rangle_0$ on $\C^{\,n}$. Then $\calH_*$ is exact interpolation
 with respect to $\overline{\calH}$ if and only if $B=h(A)$ where
$h\in P'$.
\end{thm}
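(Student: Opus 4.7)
The ``if'' direction has already been established in \textsection\ref{mex}, so I focus on the ``only if'' direction. My plan is to use Donoghue's lemma to reduce the question to a finite-dimensional convex-geometric statement, and then to apply the $K$-property of Theorem \ref{mlem} via a Hahn--Banach separation argument.

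Donoghue's lemma (Lemma \ref{donogh}) will give $B=h(A)$ for some positive function $h:\sigma(A)\to\R_+$, where $\sigma(A)=\{\lambda_1,\ldots,\lambda_n\}$. In view of the integral representation \eqref{Pick}, proving $h\in P'$ is equivalent to exhibiting a positive Radon measure $\vro$ on $[0,\infty]$ such that $h(\lambda_i)=\int_{[0,\infty]} k_t(\lambda_i)\, d\vro(t)$ for every $i$, where $k_t(\lambda):=(1+t)\lambda/(1+t\lambda)$, with the natural conventions $k_0(\lambda)=\lambda$ and $k_\infty(\lambda)=1$. Writing $\gamma(t)=(k_t(\lambda_i))_{i=1}^n\in\R^n$, this amounts to showing that the vector $v:=(h(\lambda_i))_{i=1}^n$ lies in the convex cone $K\subset\R^n$ generated by the compact curve $\gamma([0,\infty])$. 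Since $\gamma([0,\infty])$ is compact and contained in the open positive orthant $\R_{>0}^n$, the cone $K$ is closed, and standard weak-$*$ approximation of positive Radon measures by finite atomic ones identifies $K$ with the set of all integrals $\int\gamma\,d\vro$.

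Suppose, for contradiction, that $v\notin K$. Then by finite-dimensional Hahn--Banach there is $\xi=(\xi_i)\in\R^n$ with
\begin{equation*}
\sum_{i=1}^n \xi_i\, k_t(\lambda_i)\ge 0\quad\text{for every }t\in[0,\infty],\qquad \sum_{i=1}^n \xi_i\, h(\lambda_i)<0.
\end{equation*}
The key observation is that the first condition is secretly a $K$-functional inequality. Setting $x_i:=\sqrt{\xi_i^+}$ and $y_i:=\sqrt{\xi_i^-}$ in $\C^n$, so that $|x_i|^2-|y_i|^2=\xi_i$, the explicit formula \eqref{clam2} (reparametrised back to the usual $K$-functional) yields
\begin{equation*}
(1+s^{-1})\bigl(K(s,x)-K(s,y)\bigr)=\sum_{i=1}^n\xi_i\, k_s(\lambda_i)\ge 0,\qquad s>0,
\end{equation*}
so $K(s,y)\le K(s,x)$ for every $s>0$. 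By Theorem \ref{mlem} there then exists a contraction $T\in\bo(\overline{\calH})$ with $Tx=y$; since $\calH_*$ is exact interpolation, $T$ is also a contraction on $\calH_*$, hence
\begin{equation*}
\sum_{i=1}^n h(\lambda_i)|y_i|^2=\|y\|_*^{\,2}\le \|x\|_*^{\,2}=\sum_{i=1}^n h(\lambda_i)|x_i|^2,
\end{equation*}
i.e.\ $\sum_i \xi_i h(\lambda_i)\ge 0$, contradicting the separation.

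The only real obstacle is spotting the dictionary between the dual-cone condition $\sum \xi_i k_t(\lambda_i)\ge 0$ and the $K$-functional inequality $K(s,y)\le K(s,x)$; this dictionary is immediate once one recognises \eqref{clam2} as the diagonal form of the Pick kernel and makes the natural choice $|x_i|^2-|y_i|^2=\xi_i$ that turns a signed vector $\xi$ into a pair of non-negative vectors. Thereafter the finite-dimensional $K$-property of Theorem \ref{mlem} carries everything through with no additional estimates.
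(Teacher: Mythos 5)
Your proof is correct, and the heart of it coincides with the paper's: you use Donoghue's lemma to reduce to $B=h(A)$, and the central step---translating the dual condition $\sum_i \xi_i k_t(\lambda_i)\ge 0$ into a $K$-functional inequality via $\xi_i=|x_i|^2-|y_i|^2$ and then invoking the finite-dimensional $K$-property of Theorem~\ref{mlem} through the exact-interpolation hypothesis---is exactly the key computation in the paper's argument (it shows the functional $\phi(\sum a_i k_{\lambda_i})=\sum a_i h(\lambda_i)$ is positive). Where you diverge is in how the Radon measure is ultimately produced. The paper arranges (after rescaling so $1\in\sigma(A)$, hence $\1\in V$) that the positive linear functional $\phi$ on $V\subset C[0,\infty]$ attains its norm at $\1$, so that \emph{any} Hahn--Banach extension $\Phi$ to $C[0,\infty]$ is again positive, and then applies the Riesz representation theorem. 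You instead stay in $\R^n$: you argue that the vector $(h(\lambda_i))_i$ lies in the closed conical hull $K$ of the compact curve $\gamma([0,\infty])\subset\R^n_{>0}$ (closed because $0\notin\overline{\mathrm{conv}}\,\gamma([0,\infty])$) by finite-dimensional hyperplane separation, and then identify $K$ with the set of moment vectors $\int\gamma\,d\vro$ via Carath\'eodory and weak-$*$ approximation. Both routes are correct. Yours is more elementary and avoids the $C^*$-algebraic subtlety of verifying that the Hahn--Banach extension is positive; the paper's phrasing has the advantage of producing the measure and its positivity simultaneously and does not require checking closedness of a cone. Either way the argument is carried entirely by the same dictionary between cone-dual vectors and $K$-functional comparisons, and by Theorem~\ref{mlem}.
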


\begin{rem}
The lemma says that the class of functions $h$ on $\sigma(A)$ satisfying
\begin{equation}\label{solve}{T}^*T\le 1\quad,\quad {T}^*AT\le A\quad \Rightarrow\quad {T}^*h(A)T\le h(A), \qquad (T\in M_n(\C))\end{equation}
is precisely the set $P'|\sigma(A)$ of restrictions of $P'$-functions to $\sigma(A)$.
In this way, the condition \eqref{solve} provides an operator-theoretic solution
to the interpolation problem by positive Pick functions on a finite subset of $\mathbf{R}_+$.
\end{rem}

\begin{proof}[Proof of Theorem \ref{dono1}] We already know that the spaces $\calH_*$ of the asserted
form are exact interpolation relative to $\overline{\calH}$ (see subsections \ref{mex} and \ref{opint}).

Now let $\calH_*$ be any exact quadratic interpolation space.
By Donoghue's lemma and the argument
in \textsection \ref{tml}, we can for an appropriate positive sequence $\lambda=(\lambda_i)_1^n$
identify
$\overline{\calH}=\overline{\ell_2^n}(\lambda)$,
$A=\diag(\lambda_i)$, and $B=h(A)$ where $h$ is some positive function defined on $\sigma(A)=\{\lambda_i\}_1^n$.

Our assumption is that $\ell_2^n\left(h(\lambda)\right)$ is exact interpolation relative to
$\overline{\ell_2^n}(\lambda)$. We must prove that $h\in P'|\sigma(A)$.
To this end, write
$$k_{\lambda_i}(t)=\frac {(1+t)\lambda_i} {1+t\lambda_i},$$
and recall that (see Lemma \ref{kcalc})
$$K\left(t,x;\overline{\ell_2^n}(\lambda)\right)=\left(1+t^{-1}\right)^{-1}\sum_1^n|x_i|^{\,2}\,k_{\lambda_i}(t).$$
Let us denote by $C$ the algebra of continuous complex functions on $[0,\infty]$ with the supremum norm
$\left\|\,u\,\right\|_\infty=\sup_{t>0}|\,u(t)\,|$. Let $V\subset C$ be the linear span of the $k_{\lambda_i}$
for $i=1,\ldots,n$. We define a positive functional $\phi$ on $V$ by
$$\phi(\sum_1^n a_ik_{\lambda_i})=\sum_1^n a_i\,h(\lambda_i).$$
We claim that $\phi$ is a \textit{positive functional}, i.e., if $u\in V$ and $u(t)\ge 0$ for all $t>0$,
then $\phi(u)\ge 0$.

To prove this let $u=\sum_1^n a_ik_{\lambda_i}$ be non-negative on $\mathbf{R}_+$ and write
$a_i=|x_i|^{\,2}-|y_i|^{\,2}$ for some $x,y\in\C^n$. The condition that $u\ge 0$ means that
\begin{equation}\label{chur}\begin{split}\left(1+t^{-1}\right)K\left(t,x;\overline{\ell_2^n}(\lambda)\right)&=
\sum_{i=1}^n|x_i|^{\,2}\,k_{\lambda_i}(t)\\
&\ge \sum_{i=1}^n|y_i|^{\,2}\,k_{\lambda_i}(t)\\
&=\left(1+t^{-1}\right)K\left(t,y;\overline{\ell_2^n}(\lambda)\right),\qquad t>0.\\
\end{split}
\end{equation}
Since $\overline{\ell_2^n}(\lambda)$ is an exact Calderón couple (by Theorem \ref{mlem}), the space $\ell_2^n(h(\lambda))$ is exact $K$-monotonic. In other words,
\eqref{chur} implies that
$$\left\|\,x\,\right\|_{\ell_2^n(h(\lambda))}\ge \left\|\,y\,\right\|_{\ell_2^n(h(\lambda))},$$
i.e.,
$$\phi(u)=\sum_1^n\left(|x_i|^{\,2}-|y_i|^{\,2}\right)\,h(\lambda_i)\ge 0.$$
The asserted positivity of $\phi$ is thereby proved.

Replacing $\lambda_i$ by $c\lambda_i$ for a suitable positive constant $c$ we can without losing
generality assume that $1\in\sigma(A)$, i.e., that the unit $\1(x)\equiv 1$ of the $C^*$-algebra $C$ belongs to $V$.
The positivity of $\phi$ then ensures that
$$\left\|\,\phi\,\right\|=\sup_{u\in V;\, \|u\|_\infty\le 1}\left|\phi(u)\right|=\phi(\1).$$
Let $\Phi$ be a Hahn-Banach extension of $\phi$ to $C$ and note that
$$\left\|\,\Phi\,\right\|=\left\|\,\phi\,\right\|=\phi(\1)=\Phi(\1).$$
This means that $\Phi$ is a positive functional on $C$ (cf. \cite{Mu}, \textsection 3.3). By the Riesz representation
theorem there is thus a positive Radon measure $\vro$ on $[0,\infty]$ such that
$$\Phi(u)=\int_{[0,\infty]}u(t)\, d\vro(t),\qquad u\in C.$$
In particular
$$h(\lambda_i)=\phi\left(k_{\lambda_i}\right)=\Phi\left(k_{\lambda_i}\right)=\int_{[0,\infty]}
\frac {(1+t)\lambda_i} {1+t\lambda_i}\, d\vro(t),\quad i=1,\ldots,n.$$
We have shown that $h$ is the restriction to $\sigma(A)$ of a function of class $P'$.
\end{proof}

\subsection{The proof of Donoghue's theorem} We here prove Theorem \ref{dthm} in full generality.

We remind the reader that if $S\subset\R_+$ is a subset, we write $P'|S$ for the convex cone of restrictions of $P'$-functions
to $S$. We first collect some simple facts about this cone.

\begin{lem} \label{flem} (i) The class $P'|S$ is closed under pointwise convergence.

(ii) If $S$ is finite and if $\lambda=(\lambda_i)_{i=1}^n$ is an enumeration of the points
of $S$ then $h$ belongs to $P'|S$ if and only if $\ell_2^n(h(\lambda))$ is exact interpolation
with respect to the pair $\overline{\ell_2^n}(\lambda)$.

(iii) If $S$ is infinite, then a continuous
function $h$ on $S$ belongs to $P'|S$ if and only if $h\in P'|\Lambda$ for every finite subset $\Lambda\subset
S$.
\end{lem}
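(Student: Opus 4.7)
The plan is to handle the three parts in order, with (i) supplying the compactness tool that makes (iii) work, while (ii) is essentially a repackaging of Theorem \ref{dono1}.

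For part (i), I would write every $h_n\in P'|S$ as the restriction of a Pick function $\int_{[0,\infty]} \frac{(1+t)\lambda}{1+t\lambda}\, d\vro_n(t)$ and show that the measures $\vro_n$ have uniformly bounded total mass. Fixing any $\lambda_0\in S$, the kernel $t\mapsto\frac{(1+t)\lambda_0}{1+t\lambda_0}$ is continuous on $[0,\infty]$ and bounded below by $\min\{1,\lambda_0\}>0$, so $\vro_n([0,\infty])\le h_n(\lambda_0)/\min\{1,\lambda_0\}$, which is bounded because $h_n(\lambda_0)\to h(\lambda_0)$. By weak-$*$ compactness I extract a subsequence $\vro_{n_k}\to\vro$. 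Since for each $\lambda\in S$ the kernel is a continuous function on the compact space $[0,\infty]$, passing to the limit yields $h(\lambda)=\int\frac{(1+t)\lambda}{1+t\lambda}\, d\vro(t)$, so $h\in P'|S$.

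For part (ii), the forward implication is already established in \textsection\ref{mex}: any function in $P'|S$ arises from a measure $\vro$ through \eqref{Pick}, and the corresponding space $\calH_\vro$ is exact interpolation. The reverse implication is precisely the content of Theorem \ref{dono1}.

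For part (iii), the forward direction is immediate (restriction of any fixed $P'$-extension of $h$). For the reverse, I would choose a countable dense subset $\{x_j\}_{j\ge 1}\subset S$ and set $\Lambda_n=\{x_1,\dots,x_n\}$. The hypothesis furnishes, for each $n$, a Pick function $h_n\in P'$ whose restriction to $\Lambda_n$ agrees with $h$. Applying exactly the compactness argument from part (i) with $\lambda_0=x_1$, I extract a subsequence whose representing measures converge weakly-$*$ to some $\vro$; the associated function $\tilde h\in P'$ then satisfies $\tilde h(x_j)=h(x_j)$ for every $j$, since eventually $x_j\in\Lambda_{n_k}$. Because $\tilde h$ is continuous on $\mathbf{R}_+$ and $h$ is continuous on $S$, the equality extends by density from $\{x_j\}$ to all of $S$, so $h\in P'|S$.

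The main obstacle is (iii), and more specifically making the compactness-plus-density step rigorous: one has to be sure that the uniform bound on total masses obtained at a single point $x_1$ really transfers across the whole sequence $h_n$ of finite-dimensional extensions, and that the resulting weak-$*$ limit identifies with $h$ on all of $S$ (not just the dense subset). The continuity hypothesis on $h$ is exactly what closes this gap.
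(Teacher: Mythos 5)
Your proposal is correct and follows essentially the same route as the paper: part (i) by uniform bounds on representing masses plus weak-$*$ (Helly) compactness, part (ii) as a restatement of Theorem~\ref{dono1}, and part (iii) by choosing an increasing sequence of finite subsets with dense union, invoking part (i), and then using continuity of $h$ to pass from the dense subset to all of $S$. You have merely spelled out the density-plus-continuity step that the paper leaves implicit.
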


\begin{proof}
(i) Let $h_n$ be a sequence in $P'$
converging pointwise on $S$ and fix $\lambda\in S$. It is clear that the boundedness of the numbers $h_n(\lambda)$
is equivalent to boundedness of the total masses of the corresponding measures $\vro_n$ on the compact set $[0,\infty]$.
It now suffices to apply Helly's selection theorem.

(ii) This is Theorem \ref{dono1}.

(iii) Let $\Lambda_n$ be an increasing sequence of finite subsets of $S$
whose union is dense. Let $h_n=h|\Lambda_n$ where $h$ is continuous on $S$.
If $h_n\in P'|\Lambda_n$ for all $n$ then the sequence $h_n$ converges pointwise on $\cup\Lambda_n$ to
$h$. By part (i) we then have $h\in P'|\sigma(A)$.
\end{proof}

We can now finish the proof of Donoghue's theorem (Theorem \ref{dthm}).

Let $\calH_*$ be exact interpolation with respect to $\overline{\calH}$ and represent the
corresponding operator as $B=h(A)$ where
$h$ satisfies \eqref{H-conc}.
By the remarks after Theorem \ref{dthm}, the function $h$ is locally Lipschitzian.

In view of Lemma \ref{flem} we shall be done when we have proved that
$\ell_2^n(h(\lambda))$ is exact interpolation with respect to $\overline{\ell_2^n}(\lambda)$
for all sequences $\lambda=(\lambda_i)_1^n\subset\sigma(A)$ of distinct points. Let us arrange the sequences in
increasing order: $0<\lambda_1<\cdots<\lambda_n$.

Fix $\eps>0$, $\eps<\min\{c,\lambda_1,1/\lambda_n\}$ and let $E_i=[\lambda_i-\eps,\lambda_i+\eps]\cap\sigma(A)$; we assume that
$\eps$ is sufficiently small that the $E_i$ be disjoint. Let $M=\cup_1^n E_i$. We can assume that $h$ has
Lipschitz constant at most $1$ on $M$.

Let $\calM$ be the reducing subspace of $\calH_0$ corresponding to the spectral set $M$, and let
$\tilde{A}$ be the compression of $A$ to $\calM$. We define a function $g$ on $M$ by
$g(\lambda)=\lambda_i$ on $E_i$. Then $\left|g(\lambda)-\lambda\right|<\eps$ on $\sigma(\tilde{A})$, so
\begin{equation}\|\,\tilde{A}-g(\tilde{A})\,\|\le\eps\quad ,\quad
\|\,h(\tilde{A})-h(g(\tilde{A}))\,\|\le \eps.\end{equation}

\begin{lem} \label{yneq} Suppose that $A',A''\in \bo\left(\calM\right)$ satisfy $A',A''\ge \delta>0$ and $\left\|\,A'-A''\,\right\|\le\eps$.
Then $\left\|\,T\,\right\|_{A''}\le\sqrt{1+2\eps/\delta}\,
\max\{\left\|\,T\,\right\|,
\left\|\,T\,\right\|_{A'}\}$ for all $T\in \bo\left(\calM\right)$.
\end{lem}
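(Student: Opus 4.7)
The plan is to reformulate the conclusion as a comparison of positive operators. Recall that $\|T\|_C^2 \le c$ is equivalent to the operator inequality $T^*CT \le c\,C$. So the target inequality $\|T\|_{A''}^2 \le (1+2\varepsilon/\delta)M^2$, where $M = \max\{\|T\|, \|T\|_{A'}\}$, is equivalent to
\[
T^* A'' T \le \left(1 + \frac{2\varepsilon}{\delta}\right) M^2\, A''.
\]
I would work directly with this form.

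First, I would split $A'' = A' + (A'' - A')$ inside the quadratic form $T^* A'' T$, giving
\[
T^* A'' T = T^* A' T + T^*(A'' - A')\,T.
\]
For the first summand, the hypothesis $\|T\|_{A'} \le M$ yields $T^*A'T \le M^2 A'$. For the second, since $A', A''$ are self-adjoint and $\|A'-A''\| \le \varepsilon$, I have the operator bound $-\varepsilon I \le A'' - A' \le \varepsilon I$, so $T^*(A''-A')T \le \varepsilon\, T^*T \le \varepsilon M^2 I$ using $\|T\| \le M$.

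Next I would convert everything to a statement about $A''$ alone. The inequality $A' \le A'' + \varepsilon I$ (again from $\|A'-A''\|\le\varepsilon$) yields $M^2 A' \le M^2 A'' + \varepsilon M^2 I$, so combining,
\[
T^* A'' T \le M^2 A'' + 2\varepsilon M^2 I.
\]
The last step uses the lower bound hypothesis $A'' \ge \delta > 0$, which gives $I \le \delta^{-1} A''$, and therefore
\[
T^* A'' T \le M^2 A'' + \frac{2\varepsilon M^2}{\delta} A'' = \left(1 + \frac{2\varepsilon}{\delta}\right) M^2 A'',
\]
which is exactly the desired estimate after taking square roots.

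There is no real obstacle here; the calculation is routine once one translates norms into operator inequalities. The only subtle point is remembering to use the lower bound $A''\ge\delta$ at the very end to convert the additive perturbation $\varepsilon I$ into a multiplicative perturbation of $A''$ — this is where the factor $\varepsilon/\delta$ enters and is also the reason the hypothesis $\delta>0$ cannot be dropped.
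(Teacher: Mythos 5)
Your proof is correct and follows essentially the same route as the paper: both split $T^*A''T$ as $T^*A'T + T^*(A''-A')T$, bound the perturbation terms by $\varepsilon\cdot(\text{norm})^2 I$ using $\|A'-A''\|\le\varepsilon$, and then use the lower bound $A''\ge\delta$ to convert the additive $\varepsilon I$ into the multiplicative factor $\varepsilon/\delta$ of $A''$. The only cosmetic difference is that you introduce $M=\max\{\|T\|,\|T\|_{A'}\}$ at the outset, whereas the paper carries $\|T\|^2$ and $\|T\|_{A'}^2$ separately and takes the maximum in the penultimate line.
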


\begin{proof} By definition, $\left\|\,T\,\right\|_{A'}$ is the smallest number $C\ge 0$ such that
${T}^*A'T\le C^{\,2}A'.$ Thus
\begin{align*}
{T}^*A''T&= T^*(A''-A')T+T^*A'T\\
&\le \left\|\,T\,\right\|^{\,2}\eps+\left\|\,T\,\right\|_{A'}^{\,2}\left(A''+\left(A'-A''\right)\right)\\
&\le 2\eps\max\{\left\|\,T\,\right\|^{\,2},\left\|\,T\,\right\|_{A'}^{\,2}\}+
\left\|\,T\,\right\|_{A'}^{\,2}A''\\
&\le\max\{\left\|\,T\,\right\|^{\,2},\left\|\,T\,\right\|_{A'}^{\,2}\}\left(1+2\eps/\delta\right)A''.
\end{align*}
\end{proof}

We can find $\delta>0$ such that the operators $\tilde{A}$, $g(\tilde{A})$, $h(\tilde{A})$, and
$h(g(\tilde{A}))$ are $\ge \delta$. Then by repeated use of Lemma \ref{yneq},
\begin{align*}\left\|\,T\,\right\|_{h\left(g\left(\tilde{A}\right)\right)}&\le \sqrt{1+2\eps/\delta}\,\max\{\left\|\,T\,\right\|,\left\|\,T\,\right\|_{h\left(\tilde{A}\right)}\}\\
&\le \sqrt{1+2\eps/\delta}\,\max\left\{\left\|\,T\,\right\|,\left\|\,T\,\right\|_{\tilde{A}}\right\}\\
&\le \left(1+2\eps/\delta\right)\max\{\left\|\,T\,\right\|,\left\|\,T\,\right\|_{g\left(\tilde{A}\right)}\},\quad T\in \bo(\calM).
\end{align*}
Let $e_i$ be a unit vector supported by the spectral set $E_i$ and define a space $\calV\subset\calM$
to be the $n$-dimensional space spanned by the $e_i$. Let $A_0$ be the compression of
$g(\tilde{A})$ to $\calV$; then
\begin{equation}\label{LAST}\left\|\,T\,\right\|_{h\left(A_0\right)}\le \left(1+2\eps/\delta\right)\max\left\{\left\|\,T\,\right\|,\left\|\,T\,\right\|_{A_0}\right\},\quad T\in \bo\left(\calV\right).\end{equation}
Identifying $\calV$ with $\ell_2^n$ and $A_0$ with the matrix $\diag(\lambda_i)$, we see that \eqref{LAST} is independent of
$\eps$. Letting $\eps$ diminish to $0$ in \eqref{LAST} now gives that $\ell_2^n(h(\lambda))$
is exact interpolation with respect to $\overline{\ell_2^n}(\lambda)$. In view of Lemma \ref{flem},
this finishes the proof of Theorem \ref{dthm}. q.e.d.

\section{Classes of matrix functions}

In this section, we discuss the basic properties of interpolation functions: in particular, the relation to the well known classes of monotone matrix functions.
We refer to the books \cite{D0} and \cite{RR} for further reading on the latter classes.

\subsection{Interpolation and matrix monotone functions} \label{monofun}
Let $A_1$ and $A_2$ be positive
operators in $\ell_2^n$ ($n=\infty$ is admitted). Suppose that $A_1\le A_2$ and
form the following operators on $\ell_2^n\oplus\ell_2^n$,
$$T_0=\begin{pmatrix}0 & 0 \cr 1 & 0\cr\end{pmatrix}\quad ,\quad
A=\begin{pmatrix} A_2 & 0 \cr 0 & A_1\cr \end{pmatrix}.$$
It is then easy to see that ${T_0}^*T_0\le 1$ and that ${T_0}^*AT_0=\begin{pmatrix} A_1 & 0\cr
0 & 0\cr \end{pmatrix} \le A$.

Now assume that
a function $h$ on $\sigma(A)$ belongs to the
class $C_A$ defined in \textsection \ref{donolemm}, i.e., that $h$ satisfies
\begin{equation}\label{giverise}T^*T\le 1\quad ,\quad T^*AT\le A\quad \Rightarrow\quad T^*h(A)T\le h(A),\end{equation}
where $T$ denotes an operator on $\ell_2^{2n}$.

We then have ${T_0}^*h(A)T_0\le h(A)$, or
$$\begin{pmatrix} h(A_1) & 0\cr 0 & 0\cr \end{pmatrix} \le \begin{pmatrix} h(A_2) & 0\cr
0 &  h(A_1)\cr \end{pmatrix}.$$
In particular, we find that
$h(A_1)\le h(A_2)$. We have shown that (under the assumptions
above)
\begin{equation}\label{mono}A_1\le A_2\qquad \Rightarrow\qquad h\left(A_1\right)\le h\left(A_2\right).\end{equation}

We now change our point of view slightly. Given a positive integer $n$, we let
$C_n$ denote the convex of positive functions $h$ on $\R_+$ such that
\eqref{giverise} holds for \textit{all}
positive operators $A$ on $\ell_2^n$ and all $T\in \bo\left(\ell_2^n\right)$.

Similarly, we let
$P_n'$ denote the class of all positive functions $h$ on $\R_+$ such that
$h(A_1)\le h(A_2)$ whenever $A_1,A_2$ are positive operators on $\ell_2^n$ such that $A_1\le A_2$. We refer to $P_n'$ as the cone of
positive functions \textit{monotone of order $n$} on $\R_+$.

We have shown above that $C_{2n}\subset P_{n}'$.

In the other direction, assume that $h\in P_{2n}'$. Let $A, T$ be bounded operators on $\ell_2^n$ with
$A>0$, $T^*T\le 1$ and $T^*AT\le A$. Assume also that $h$ be continuous.
We will use the following lemma due to Hansen \cite{H0}. We recall the proof for completeness.

\begin{lem}\label{hans} (\cite{H0}) $T^*h(A)T\le h(T^*AT)$.
\end{lem}

\begin{proof} Put $S=(1-TT^*)^{1/2}$ and $R=(1-T^*T)^{1/2}$ and consider the $2n\times 2n$ matrix
$$U=\begin{pmatrix}T & S\cr
R & -T^*\cr
\end{pmatrix}\qquad,\qquad X=\begin{pmatrix} A & 0\cr
0 & 0\cr\end{pmatrix}.$$
It is well-known, and easy to check, that $U$ is unitary and that
$$U^*XU=\begin{pmatrix} T^*AT & T^*AS\cr SAT & SAS\cr\end{pmatrix}.$$

Next fix a number $\eps>0$, a constant $\lambda>0$ (to be fixed), and form the matrix
$$Y=\begin{pmatrix}T^*AT+\eps & 0\cr 0 & 2\lambda\cr\end{pmatrix}$$
which, provided that we choose $\lambda\ge \|SAS\|$, satisfies
$$Y-U^*XU=\begin{pmatrix}\eps & -T^*AS\cr -SAT & 2\lambda-SAS\end{pmatrix}\ge \begin{pmatrix}\eps & D\cr
D^* & \lambda\end{pmatrix},$$
where we have written $D=-T^*AS$.

If we now also choose $\lambda$ so that
$\lambda\ge \|D\|^2/\eps$, then
we obtain for all $\xi,\eta\in\C^n$ that
\begin{align*}\left\langle \begin{pmatrix}\eps & D\cr
D^* & \lambda\end{pmatrix}\begin{pmatrix}\xi\cr\eta\cr\end{pmatrix}\,,\,\begin{pmatrix}\xi\cr\eta\cr\end{pmatrix}\right\rangle&=
\eps\|\xi\|^2+\langle D\eta,\xi\rangle+\langle D^*\xi,\eta\rangle+\lambda\|\eta\|^2\\
&\ge \eps\|\xi\|^2-2\|D\|\|\xi\|\|\eta\|+\lambda\|\eta\|^2\ge 0.
\end{align*}
Hence $U^*XU\le Y$ and as a consequence $U^*h(X)U=h(U^*XU)\le h(Y)$, since $h$ is matrix monotone of order $2n$.
The last inequality means that
$$\begin{pmatrix}T^*h(A)T & T^*h(A)S\cr
Sh(A)T & Sh(A)S\cr\end{pmatrix}\le \begin{pmatrix} h(T^*AT+\eps) & 0\cr
0 & h(2\lambda)\cr
\end{pmatrix},$$
so in particular $T^*h(A)T\le h(T^*AT+\eps)$. Since $\eps>0$ was arbitrary, and since $h$ is assumed to be continuous,
we conclude the lemma.
\end{proof}

We now continue our discussion. Assuming that $T^*T\le 1$ and $T^*AT\le A$, and that $h\in P_{2n}'$ is continuous, we have $h(T^*AT)\le h(A)$ [since $h\in P_n'$],
so $T^*h(A)T\le h(A)$ by Lemma \ref{hans}. We conclude that $h\in C_n$.

To prove that $P_{2n}'\subset C_n$, we need to remove the continuity assumption on $h$ made above.
This is completely standard: let $\fii$ be a smooth positive function on $\R_+$ such that
$\int_0^\infty \fii(t)\, dt/t=1$, and define a sequence $h_k$ by $h_k(\lambda)=k^{-1}\int_0^\infty
\fii\left(\lambda^k/t^k\right)h(t)\, dt/t$. The class $P_{2n}'$ is a convex cone, closed under pointwise
convergence \cite{D0}, so the functions $h_1,h_2,\ldots$ are of class $P_{2n}'$. They are furthermore
continuous, so by the argument above, they are of class $C_n$. By Lemma \ref{flem}, the cone
$C_n$ is also closed under pointwise convergence, so $h=\lim h_n\in C_n$.

To summarize, we have the inclusions $C_{2n}\subset P_n'$, $P_{2n}'\subset C_n$, and also $C_{n+1}\subset C_n$, $P_{n+1}'\subset P_n'$.
%shown that $C_{2n}\subset P_n'\subset C_n$ (this
%result is found in \cite{A2}).
In view of Theorem \ref{dthm}, we have the identity $\cap_1^\infty C_n=P'$.
The inclusions above now imply the following result, sometimes known as "Löwner's theorem on matrix monotone functions".

\begin{thm} \label{loewn} We have $\cap_1^\infty P_n'=\cap_1^\infty C_n=P'$.
\end{thm}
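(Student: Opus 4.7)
The plan is to deduce the theorem as a corollary of the two-sided inclusion $C_{2n}\subset P_n'\subset C_n$ just established, together with Theorem \ref{dono1} and the patching lemma Lemma \ref{flem}(iii).

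The first equality $\bigcap_{n=1}^\infty P_n'=\bigcap_{n=1}^\infty C_n$ is formal. One checks first that $C_m\subset C_n$ whenever $n\le m$ by a trivial padding argument: given $A,T\in\bo(\ell_2^n)$ with $T^*T\le 1$ and $T^*AT\le A$, extend to $\tilde A=A\oplus I$ and $\tilde T=T\oplus 0$ on $\ell_2^m$; both inequalities persist under the extension, and the conclusion $\tilde T^*h(\tilde A)\tilde T\le h(\tilde A)$, read off in the first block, delivers $T^*h(A)T\le h(A)$. Consequently $\bigcap_n C_{2n}=\bigcap_n C_n$, and intersecting the chain $C_{2n}\subset P_n'\subset C_n$ over $n$ sandwiches $\bigcap_n P_n'$ between two copies of the same set.

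For the second equality, the inclusion $P'\subset\bigcap_n C_n$ is already contained in \textsection\ref{mex}: each $h\in P'$ yields an exact interpolation norm of the form \eqref{fvro} on every regular Hilbert couple, in particular on every finite-dimensional diagonal couple $\overline{\ell_2^n}(\lambda)$. For the reverse inclusion, let $h\in\bigcap_n C_n$ and fix any distinct $\lambda_1,\dots,\lambda_n\in\R_+$. Applying the $C_n$-property to $A=\diag(\lambda_i)$ on $\ell_2^n$ says precisely that $\ell_2^n(h(\lambda))$ is exact interpolation with respect to $\overline{\ell_2^n}(\lambda)$, so by Theorem \ref{dono1} the restriction $h|\{\lambda_i\}$ belongs to $P'|\{\lambda_i\}$. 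Since the finite set was arbitrary, Lemma \ref{flem}(iii) will deliver $h\in P'$ once the continuity of $h$ on $\R_+$ is in hand.

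The continuity of $h$ is the only point needing separate comment, and will be the main (if modest) obstacle. I would derive it already from $h\in C_2$: taking $A=\diag(\lambda,\mu)$ on $\ell_2^2$, Theorem \ref{pcomb} (equivalently Remark \ref{qcrem}) produces the quasi-concavity estimate \eqref{quasic} on the pair $\{\lambda,\mu\}$, and letting $\lambda,\mu$ range over $\R_+$ makes $h$ quasi-concave, hence locally Lipschitz, throughout $\R_+$. With continuity secured, Lemma \ref{flem}(iii) closes the argument; beyond the matrix-padding step and this quasi-concavity bookkeeping, no new analytic work is needed, since all the substance has been carried by Theorem \ref{dono1}.
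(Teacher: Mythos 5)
Your proposal is correct and follows the paper's route: you use the just-established sandwich $C_{2n}\subset P_n'\subset C_n$, observe (via a padding argument) that the $C_n$ are nested so that intersecting over $n$ collapses the sandwich, and then identify $\cap_n C_n$ with $P'$. The paper dispatches the final identification by simply citing Theorem \ref{dthm}; your more explicit unpacking through Theorem \ref{dono1}, the continuity of $h$, and Lemma \ref{flem}(iii) is exactly what that citation hides, so it is a welcome clarification rather than a new route. (A minor simplification: you do not need Theorem \ref{pcomb} for continuity — once Theorem \ref{dono1} gives $h|\{\lambda,\mu\}\in P'|\{\lambda,\mu\}$ for every two-point set, the quasi-concavity estimate \eqref{quasic}, hence local Lipschitz continuity, follows immediately because every $P'$-function is quasi-concave and \eqref{quasic} is a pointwise two-point condition.)
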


The identity $\cap_1^\infty P_n'=P'$  says that a positive function $h$ is monotone of all finite orders if
and only it is of class $P'$. The somewhat less precise fact that
$P_\infty'=P'$ is interpreted as that the class of \textit{operator monotone functions} coincides with $P'$.
%These are versions of L\"owner's theorem, see Remark \ref{lowr}

The identity $C_\infty=P'$ is, except for notation, contained in the work of
Foia\c{s} and Lions, from \cite{FL}. See \textsection \ref{jfl}.

Note that the inclusion $P_{2n}'\subset C_n$ shows that a matrix monotone functions of order $2n$ can be interpolated by a positive Pick function at $n$ points.
Results of a similar nature, where it is shown, in addition, that an interpolating Pick function can be taken rational of a certain degree, are discussed, for example, in Donoghue's book \cite[Chapter XIII]{D1} or (more relevant in the present connection) in the paper \cite{D2}.

%\begin{rem} \label{lowr}
It seems somewhat inaccurate to refer to the identity $\cap_1^\infty P_n'=P'$ as "L\"owner's theorem", since
L\"owner discusses more subtle results concerning matrix monotone functions \textit{of a given finite order} $n$.
In spite of this, it is common nowadays to let "L\"owner's theorem" refer to this identity.
%\end{rem}

\subsection{More on the cone $C_A$} We can now give an short proof of the following result due to Donoghue \cite{D2}.
%Donoghue's precise description
%of the cone $C_A$ used the following properties of that
%cone.

\begin{thm} \label{adprop} For a positive function $h$ on $\sigma(A)$ we define
two positive functions $\tilde{h}$ and $h^*$ on
$\sigma\left(A^{-1}\right)$ by $\tilde{h}(\lambda)=\lambda
h\left(1/\lambda\right)$ and $h^*(\lambda)=1/h\left(1/\lambda\right)$.
Then the following conditions are equivalent,
\begin{enumerate}
\item[(i)] $h\in C_A$,
\item[(ii)] $\tilde{h}\in C_{A^{-1}}$,
\item[(iii)] $h^*\in C_{A^{-1}}$.
\end{enumerate}
\end{thm}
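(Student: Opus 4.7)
The plan is to exploit two symmetries of the setup: Hilbert-space adjoint (which will give (i) $\Leftrightarrow$ (iii)) and reversal of the order of the couple (which will give (i) $\Leftrightarrow$ (ii)). The whole argument is formal, built on the elementary identity
$$T^*PT\le P\quad\Longleftrightarrow\quad TP^{-1}T^*\le P^{-1}\qquad (P>0,\ T\in\bo(\calH_0)),$$
which holds because both inequalities say $\|P^{1/2}TP^{-1/2}\|_{\bo(\calH_0)}\le 1$. Applied with $P=I$ and $P=A$, this shows that the map $T\longmapsto S:=T^*$ is a bijection between the admissible operators for $C_A$ (those with $T^*T\le 1$ and $T^*AT\le A$) and those for $C_{A^{-1}}$ (namely $S^*S\le 1$ and $S^*A^{-1}S\le A^{-1}$).

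For (i) $\Leftrightarrow$ (iii), I would take $P=h(A)$ in the identity above. A spectral calculus check gives $h(A)^{-1}=h^*(A^{-1})$, since $h^*(\mu)=1/h(1/\mu)$ and the spectrum of $A^{-1}$ is obtained from that of $A$ by $\lambda\mapsto 1/\lambda$. Hence the condition $T^*h(A)T\le h(A)$ becomes $S^*h^*(A^{-1})S\le h^*(A^{-1})$, and combining this with the bijection $T\leftrightarrow S=T^*$ shows that $h\in C_A$ iff $h^*\in C_{A^{-1}}$.

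For (i) $\Leftrightarrow$ (ii), consider the reversed couple $\overline{\calH}^{\mathrm{rev}}=(\calH_1,\calH_0)$. From the definition of $K$ one has $K(t,x;\overline{\calH}^{\mathrm{rev}})=tK(1/t,x;\overline{\calH})$, so by Remark \ref{simrem} the contractions of $\overline{\calH}$ and of $\overline{\calH}^{\mathrm{rev}}$ coincide, and consequently a Hilbert space is exact interpolation for one iff it is for the other. The operator associated with $\overline{\calH}^{\mathrm{rev}}$ is $A^{-1}$, now acting on $\calH_1$, because $\|x\|_0^{\,2}=\langle A^{-1}x,x\rangle_1$ on $\Delta$. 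Rewriting
$$\|x\|_{h(A)}^{\,2}=\langle h(A)x,x\rangle_0=\langle A^{-1}h(A)x,x\rangle_1,$$
and observing by spectral calculus that $A^{-1}h(A)=\tilde h(A^{-1})$ (at $\mu=1/\lambda$ the eigenvalue is $h(\lambda)/\lambda=\mu h(1/\mu)=\tilde h(\mu)$), one sees that the \emph{same} Hilbert space $\calH_{h(A)}$ is represented by the function $\tilde h$ for the reversed couple. Hence $h\in C_A$ iff $\tilde h\in C_{A^{-1}}$. The only real care required is the functional-calculus bookkeeping $h(A)^{-1}=h^*(A^{-1})$ and $A^{-1}h(A)=\tilde h(A^{-1})$, and keeping straight which Hilbert space each operator acts on; there is no deeper obstacle.
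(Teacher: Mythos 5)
Your proof is correct, and both halves run along essentially the same two symmetries the paper exploits. For (i) $\Leftrightarrow$ (ii) you use the reversed couple $(\calH_1,\calH_0)$, exactly as the paper does; your bookkeeping $\|x\|_*^2=\langle h(A)x,x\rangle_0=\langle \tilde h(A^{-1})x,x\rangle_1$ is the right identity (note the paper's displayed formula has a spurious extra factor $A^{-1}$, which your computation correctly omits). For (i) $\Leftrightarrow$ (iii) the paper phrases the argument in terms of the dual couple $\overline{\calH^*}$ and the fact that exact interpolation is preserved under duality, whereas you unwind that duality into the elementary operator identity $T^*PT\le P\Leftrightarrow TP^{-1}T^*\le P^{-1}$ and the bijection $T\leftrightarrow T^*$; this is the same mechanism made explicit, and arguably more self-contained since it avoids invoking the dual-couple duality as a black box. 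No gap.
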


\begin{proof} Let $\calH_*$ be a quadratic intermediate
space relative to a regular Hilbert couple $\overline{\calH}$;
let $B=h(A)$ be the corresponding operator. It is clear that $\calH_*$
is exact interpolation relative to $\overline{\calH}$ if and only
if $\calH_*$ is exact interpolation relative to the \textit{reverse couple}
$\overline{\calH^{(r)}}=\left(\calH_1,\calH_0\right)$.
The latter couple has corresponding operator $A^{-1}$ and
it is clear that the identity $\|\,x\,\|_*^{\,2}=\langle h(A)x,x\rangle_0$
is equivalent to that $\|\,x\,\|_*^{\,2}=\left\langle A^{-1}\tilde{h}\left(A^{-1}\right)x,x\right\rangle_1$.
We have shown the equivalence of (i) and (ii).

Next let $\overline{{\calH}^*}=\left({\calH_0}^*,{\calH_1}^*\right)$ be the
\textit{dual couple}, where we identify ${\calH_0}^*=\calH_0$.
With this identification, ${\calH_1}^*$ becomes associated with the
norm $\|\,x\,\|_{\calH_1^{\,*}}^{\,2}=\left\langle A^{-1}x,x\right\rangle_0$, and
${\calH_*}^{\,*}$ is associated with $\|\,x\,\|_{\calH_*^{\,*}}^{\,2}=
\left\langle B^{-1}x,x\right\rangle_0$. It remains to note that $\calH_*$ is exact
interpolation relative to $\overline{\calH}$ if and only if
${\calH_*}^{\,*}$ is exact interpolation relative to
$\overline{\calH^*}$, proving the equivalence of (i) and (iii).
\end{proof}

Combining with Theorem \ref{dthm}, one obtains alternative
proofs of the interpolation theorems for $P'$-functions discussed by
Donoghue in the paper \cite{D2}.

\begin{rem} The exact quadratic interpolation spaces
which are fixed by the duality, i.e., which satisfy ${\calH_*}^{\,*}=\calH_*$,
correspond precisely to the class of $P'$-functions which are \textit{self-dual}: $h^*=h$. This class was characterized by Hansen
in the paper \cite{H1}.
\end{rem}

\subsection{Matrix concavity} A function $h$ on $\R_+$ is called \textit{matrix
concave of order $n$} if we have Jensen's inequality
$$\lambda h\left(A_1\right)+(1-\lambda)h\left(A_2\right)\le h\left(\lambda A_1+(1-\lambda)A_2\right)$$
for all positive $n\times n$ matrices $A_1$, $A_2$, and all numbers $\lambda\in [0,1]$. Let
us denote by $\Gamma_n$ the convex cone of positive concave functions of order $n$ on $\R_+$.
The fact that $\cap_n\Gamma_n=P'$ follows from the theorem of Kraus \cite{Kr}. Following \cite{A2} we now give
an alternative proof of this fact.

\begin{prop} For all $n$ we have the inclusion $C_{3n}\subset \Gamma_n\subset P_n'$. In particular
$\cap_1^\infty \Gamma_n=P'$.
\end{prop}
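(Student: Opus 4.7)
The two inclusions to prove are (i) $C_{3n}\subset\Gamma_n$ and (ii) $\Gamma_n\subset P_n'$. The ``in particular'' is immediate: by Theorem \ref{loewn}, $\bigcap_n C_n=\bigcap_n P_n'=P'$, and intersecting (i)--(ii) over $n$ sandwiches $\bigcap_n\Gamma_n$ between two copies of $P'$. So the content lies in the two inclusions.

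\textbf{Step 1 (the inclusion $C_{3n}\subset\Gamma_n$).} Given $h\in C_{3n}$, positive $A_1,A_2\in M_n(\C)$, and $\lambda\in[0,1]$, the natural gadget is the block-diagonal positive operator $A=\diag(A_1,A_2,A_\lambda)\in M_{3n}(\C)$, where $A_\lambda=\lambda A_1+(1-\lambda)A_2$, together with the block matrix
\begin{equation*}
T=\begin{pmatrix}0 & 0 & \sqrt{\lambda}\,I\\ 0 & 0 & \sqrt{1-\lambda}\,I\\ 0 & 0 & 0\end{pmatrix}.
\end{equation*}
A direct computation gives $T^*T=\diag(0,0,I)\le I$ and $T^*AT=\diag(0,0,A_\lambda)\le A$, so $T$ meets the hypothesis in the defining property of $C_{3n}$, yielding $T^*h(A)T\le h(A)$. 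Since Borel functional calculus on a block-diagonal positive operator acts blockwise, reading off the $(3,3)$-block of this inequality produces exactly
\begin{equation*}
\lambda h(A_1)+(1-\lambda)h(A_2)\le h\bigl(\lambda A_1+(1-\lambda)A_2\bigr),
\end{equation*}
which is matrix concavity of order $n$; thus $h\in\Gamma_n$.

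\textbf{Step 2 (the inclusion $\Gamma_n\subset P_n'$).} Given $h\in\Gamma_n$ and $0\le A\le B$ in $M_n(\C)$, set $B_k=A+k(B-A)$ for $k=0,1,2,\ldots$. Each $B_k$ is positive (since $A\ge0$ and $B-A\ge0$) and satisfies the midpoint identity $B_k=\tfrac{1}{2}(B_{k-1}+B_{k+1})$, so matrix concavity of order $n$ gives $h(B_k)\ge\tfrac{1}{2}(h(B_{k-1})+h(B_{k+1}))$. Equivalently, the operator differences $d_k:=h(B_{k+1})-h(B_k)$ are decreasing in the operator order, so $d_k\le d_0=h(B)-h(A)$ for all $k$. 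Telescoping,
\begin{equation*}
h(B_K)\le h(A)+K\bigl(h(B)-h(A)\bigr).
\end{equation*}
If some unit vector $v$ satisfied $\langle(h(B)-h(A))v,v\rangle<0$, then $\langle h(B_K)v,v\rangle\to-\infty$ as $K\to\infty$, contradicting $h\ge0$. Hence $h(A)\le h(B)$, i.e.\ $h\in P_n'$.

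\textbf{Main obstacle.} Neither step is especially subtle once the right constructions are in hand; the only conceptual point is the jump from $n$ to $3n$ in Step 1, forced by the need to carry $A_1$, $A_2$, and the convex combination $A_\lambda$ simultaneously on one Hilbert space so that a single contraction $T$ can compress the ``average'' slot into the two data slots. Step 2 costs nothing in the dimension, which is why the final inclusion $\Gamma_n\subset P_n'$ preserves $n$.
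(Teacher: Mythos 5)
Your Step 1 is the same argument as the paper's, up to a permutation of the blocks: the paper places $A_3=(1-\lambda)A_1+\lambda A_2$ in the first block and uses a $T$ that spreads the first coordinate into the second and third, while you place $A_\lambda$ in the third block and spread $e_3$ into $e_1,e_2$; reading off the appropriate corner of $T^*h(A)T\le h(A)$ gives Jensen either way. Your Step 2, however, is genuinely different from the paper's. The paper picks $\lambda\in(0,1)$, writes $\lambda A_2=\lambda A_1+(1-\lambda)A_3$ with $A_3=\lambda(1-\lambda)^{-1}(A_2-A_1)\ge 0$, applies Jensen once together with $h\ge 0$ to get $h(\lambda A_2)\ge\lambda h(A_1)$, and then sends $\lambda\uparrow 1$, invoking continuity of $h$ (granted by concavity) and of the functional calculus. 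You instead use only midpoint concavity along the arithmetic progression $B_k=A+k(B-A)$: the increments $d_k=h(B_{k+1})-h(B_k)$ are decreasing in the operator order, telescoping gives the affine upper bound $h(B_K)\le h(A)+K\bigl(h(B)-h(A)\bigr)$, and positivity of $h$ then forces $h(B)-h(A)\ge 0$. This buys you an argument that never needs a limit or any continuity statement about $h$, at the modest cost of tracking an infinite family of matrices $B_k$ (all positive since $B-A\ge 0$); the paper's proof is shorter but leans on the continuity step. Both are correct, and both the ``in particular'' deduction and your remark that the descent from $3n$ is confined to Step 1 match the paper.
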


\begin{proof} Assume first that $h\in C_{3n}$ and pick two positive matrices $A_1$ and $A_2$.
Define $A_3=(1-\lambda)A_1+\lambda A_2$ where $\lambda\in[0,1]$ is given, and define
matrices $A$ and $T$ of order $3n$ by
$$A=\begin{pmatrix}A_3 & 0 & 0 \cr 0 & A_1 & 0\cr 0 & 0 & A_2\cr\end{pmatrix}\qquad
,\qquad T=\begin{pmatrix} 0 & 0 & 0 \cr
\sqrt{1-\lambda} & 0 & 0\cr \sqrt{\lambda} & 0 & 0\cr \end{pmatrix}.$$
It is clear that $T^*T\le 1$ and
$$T^*AT=\begin{pmatrix}A_3 & 0 & 0 \cr 0 & 0 & 0\cr 0 & 0 & 0\cr\end{pmatrix}\le A,$$
so, since $h\in C_{3n}$, we have $T^*h(A)T\le h(A)$, or
$$\begin{pmatrix}(1-\lambda)h(A_1)+\lambda h(A_2) & 0 & 0 \cr 0 & 0 & 0\cr 0 & 0 & 0\cr\end{pmatrix}\le
\begin{pmatrix}h(A_3) & 0 & 0 \cr 0 & h(A_1) & 0\cr 0 & 0 & h(A_2)\cr\end{pmatrix}.$$
Comparing the matrices in the upper left corners, we find that $h\in \Gamma_n$.

Assume now that $h\in \Gamma_n$, and take positive definite matrices $A_1,A_2$ of order $n$ with
$A_1\le A_2$. Also pick $\lambda\in (0,1)$. Then $\lambda A_2=\lambda A_1+(1-\lambda)A_3$ where
$A_3=\lambda(1-\lambda)^{-1}(A_2-A_1)$. By matrix concavity, we then have
$$h(\lambda A_2)\ge \lambda h(A_1)+(1-\lambda)h(A_3)\ge \lambda h(A_1),$$
where we used non-negativity to deduce the last inequality. Being concave, $h$ is certainly
continuous.
Letting $\lambda\uparrow 1$ one thus
finds that $h(A_1)\le h(A_2)$. We have shown that $h\in P_n'$.
\end{proof}

For a further discussion of classes of convex matrix functions and their relations to monotonicity, we refer to the paper \cite{He}.

\subsection{Interpolation functions of two variables} In this section, we briefly discuss a class of
interpolation functions of two matrix variables.
%The following discussion is not in any way
%conclusive, but
We shall not completely characterize the class of such generalized interpolation functions here, but we hope that the following discussion
will be of some use for a future investigation.

\smallskip

Let $H_1$ and $H_2$ be Hilbert spaces. We turn $H_1\otimes H_2$
into a Hilbert space by defining the inner product on elementary
tensors via $\left\langle x_1\otimes x_2,x_1'\otimes x_2'\right\rangle:=
\left\langle x_1,{x_1}'\right\rangle_1\cdot\left\langle x_2,{x_2}'\right\rangle_2$ (then extend
via sesqui-linearity). Similarly,
if $T_i$ are operators on $H_i$, the tensor product $T_1\otimes T_2$
is defined on elementary tensors via $\left(T_1\otimes T_2\right)(x_1\otimes x_2)=
T_1x_1\otimes T_2x_2$. It is then easy to see that if
$A_i$ are positive operators on $H_i$ for $i=1,2$, then
$A_1\otimes A_2\ge 0$ as an operator on the tensor product.
Furthermore, we have $A_1\otimes A_2\le A_1'\otimes A_2'$ if
$A_i\le A_i'$ for $i=1,2$.

\smallskip

Given two positive definite matrices $A_i$ of orders
$n_i$ and a function $h$ on $\sigma(A_1)\times\sigma(A_2)$, we define a matrix $h(A_1,A_2)$ by
$$h(A_1,A_2)=\sum_{(\lambda_1,\lambda_2)\in\sigma(A_1)\times\sigma(A_2)}h\left(\lambda_1,\lambda_2\right)E^1_{\lambda_1}\otimes E^2_{\lambda_2}$$
where $E^j$ is the spectral resolution of the matrix $A_j$.

We shall say that $h$ \textit{gives rise to exact interpolation} relative to $(A_1,A_2)$, and
write $h\in C_{A_1,A_2}$, if the condition
\begin{equation}\label{pork}{T_j}^*T_j\le 1\qquad ,\qquad {T_j}^*A_jT_j\le A_j,\qquad j=1,2\end{equation}
implies
\begin{equation}\begin{split}\label{zork}h(A_1,A_2)&+(T_1\otimes T_2)^*h(A_1,A_2)(T_1\otimes T_2)\\
&-(T_1\otimes 1)^*h(A_1,A_2)(T_1\otimes 1)-(1\otimes T_2)^*h(A_1,A_2)(1\otimes T_2)\ge 0.\\
\end{split}
\end{equation}
Taking $T_1=T_2=0$ we see that $h\ge 0$ for all $h\in C_{A_1,A_2}$. It is also clear that
$C_{A_1,A_2}$ is a convex cone closed under pointwise convergence on the finite set
$\sigma(A_1)\times\sigma(A_2)$.

If $h=h_1\otimes h_2$ is an elementary tensor
where $h_j\in C_{A_j}$ is a function
of one variable, then \eqref{pork} implies ${T_j}^*h_j(A_j)T_j\le h_j(A_j)$, whence $(h_1(A_1)-{T_1}^*h_1(A_1)T_1)\otimes (h_2(A_2)-{T_2}^*h_2(A_2)T_2)\ge 0$, which implies \eqref{zork}. We have shown that $C_{A_1}\otimes C_{A_2}\subset C_{A_1,A_2}$.

Since for each $t\ge 0$ the $P'$-function $\lambda\mapsto \frac {(1+t)\lambda} {1+t\lambda}$ is of class $C_{A_j}$,
we infer that every function representable in the form
\begin{equation}\label{p2}h(\lambda_1,\lambda_2)=\iint_{[0,\infty]^{\,2}}\frac {(1+t_1)\lambda_1} {1+t_1\lambda_1}
\frac {(1+t_2)\lambda_2} {1+t_2\lambda_2}\, d\vro(t_1,t_2)\end{equation}
with some positive Radon measure $\vro$ on $[0,\infty]^{\, 2}$ is in the class $C_{A_1,A_2}$.

We shall say that a function $h$ on $\sigma(A_1)\times \sigma(A_2)$ has the \textit{separate interpolation-property}
if for each fixed $b\in \sigma(A_2)$ the function $\lambda_1\mapsto h(\lambda_1,b)$ is of class $C_{A_1}$,
and a similar statement holds for all functions $\lambda_2\mapsto h(a,\lambda_2)$.

\begin{lem} \label{munch} Each function of class $C_{A_1,A_2}$ has the separate interpolation-property.
\end{lem}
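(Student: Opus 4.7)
The plan is to derive the one-variable interpolation property by specializing one of the two contractions $T_1,T_2$ to zero and then restricting to the appropriate spectral subspace.

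First I would observe that $T_2=0$ is admissible in the defining inequality, since trivially $T_2^*T_2 = 0 \le 1$ and $T_2^*A_2T_2 = 0 \le A_2$. With this choice the terms $(T_1\otimes T_2)^*h(A_1,A_2)(T_1\otimes T_2)$ and $(1\otimes T_2)^*h(A_1,A_2)(1\otimes T_2)$ both vanish, so the condition $h \in C_{A_1,A_2}$ collapses to
\begin{equation*}
(T_1\otimes 1)^*\,h(A_1,A_2)\,(T_1\otimes 1) \;\le\; h(A_1,A_2)
\end{equation*}
for every $T_1$ with $T_1^*T_1\le 1$ and $T_1^*A_1T_1 \le A_1$.

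Next I would fix $b\in\sigma(A_2)$ and project onto the spectral subspace $H_1\otimes \operatorname{Im} E^2_b$. This subspace is reducing for $h(A_1,A_2)$ (which commutes with $1\otimes E^2_b$) and for $T_1\otimes 1$ (which also commutes with $1\otimes E^2_b$). Writing $h_b(\lambda_1):=h(\lambda_1,b)$, the spectral decomposition gives
\begin{equation*}
(1\otimes E^2_b)\,h(A_1,A_2)\,(1\otimes E^2_b) \;=\; h_b(A_1)\otimes E^2_b,
\end{equation*}
and similarly for the conjugated expression. Restricting the displayed inequality to this reducing subspace therefore yields $T_1^*h_b(A_1)T_1 \le h_b(A_1)$ for all admissible $T_1$, i.e.\ $h_b\in C_{A_1}$.

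By the complete symmetry of the definition of $C_{A_1,A_2}$ in its two arguments, the same argument with $T_1=0$ and a compression to $\operatorname{Im} E^1_a \otimes H_2$ shows that for each $a\in\sigma(A_1)$ the function $\lambda_2\mapsto h(a,\lambda_2)$ belongs to $C_{A_2}$. There is no real obstacle here: the whole point is that the two ``cross terms'' in the definition of $C_{A_1,A_2}$ are designed precisely so that setting one $T_j$ to zero recovers the one-variable contraction inequality on each spectral slab.
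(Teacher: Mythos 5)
Your proof is correct and follows essentially the same route as the paper: set $T_2=0$ to reduce the two-variable inequality to $(T_1\otimes 1)^*h(A_1,A_2)(T_1\otimes 1)\le h(A_1,A_2)$, then localize to the eigenvalue $b$ of $A_2$ (the paper tests against vectors $x\otimes y$ with $y$ a unit eigenvector, whereas you compress to the reducing subspace $H_1\otimes\operatorname{Im}E^2_b$, which is the same thing). The symmetry remark for the other variable matches the paper's closing sentence.
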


\begin{proof} Let $T_2=0$ and take an arbitrary $T_1$ with ${T_1}^*T_1\le 1$ and ${T_1}^*A_1T_1\le A_1$.
By hypothesis,
$$(T_1\otimes 1)^*h(A_1,A_2)(T_1\otimes 1)\le
h(A_1,A_2).$$
Fix an eigenvalue $b$ of $A_2$ and let $y$ be a corresponding normalized eigenvector.
Then
for all $x\in H_1$ we have $\left\langle h(A_1,A_2)x\otimes y,x\otimes y\right\rangle=\left\langle h(A_1,b)x,x\right\rangle_{H_1}$ and
$\left\langle (T_1\otimes 1)^*h(A_1,A_2)(T_1\otimes 1)x\otimes y,x\otimes y\right\rangle=\left\langle {T_1}^*h(A_1,b)T_1x,x\right\rangle_{H_1}$
so
$$\left\langle {T_1}^*h(A_1,b)T_1x,x\right\rangle_{H_1}\le\left\langle h(A_1,b)x,x\right\rangle_{H_1}.$$
The functions $h(a,\lambda_2)$ can be treated similarly.
\end{proof}

\begin{ex} The function $h(\lambda_1,\lambda_2)=(\lambda_1+\lambda_2)^{1/2}$ clearly has the separate
interpolation-property for all $A_1,A_2$. However, it is not representable in the form \eqref{p2}.
Indeed, $\re\{h(i,i)-h(-i,i)\}=1$ while it is easy to check that
$\re\{h(\lambda_1,\lambda_2)-h(\bar{\lambda}_1,\lambda_2)\}\le 0$ whenever $\im\lambda_1,\im\lambda_2>0$ and
$h$ is of the form \eqref{p2}.
\end{ex}

Let us say that a function $h(\lambda_1,\lambda_2)$ defined on $\R_+\times\R_+$ is an
interpolation function (of two variables) if
$h\in C_{A_1,A_2}$ for all $A_1,A_2$.
Lemma \ref{munch}
implies that interpolation functions are separately real-analytic in $\R_+\times\R_+$ and that the functions $h(a,\cdot)$ and $h(\cdot,b)$ are of class $P'$ (cf. Theorem \ref{dthm}).

The above notion of interpolation function is close to Korányi's definition of monotone
matrix function of two variables: $f(\lambda_1,\lambda_2)$ is \textit{matrix monotone} in a rectangle
$I=I_1\times I_2$ ($I_1$, $I_2$ intervals in $\R$) if
$A_1\le A_1'$ (with spectra in $I_1$) and $A_2\le A_2'$ (with spectra in $I_2$) implies
$$f(A_1',A_2')-f(A_1',A_2)-f(A_1,A_2')+f(A_1,A_2)\ge 0.$$

\begin{lem} \label{im} Each interpolation function is matrix monotone in $\R_+\times\R_+$.
\end{lem}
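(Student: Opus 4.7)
The plan is to imitate, at the level of $2\times 2$ block matrices, the trick used in Section~\ref{monofun} to extract scalar monotonicity from the one-variable interpolation inequality, and then extract matrix monotonicity by restricting the resulting block-operator inequality to a corner subspace.

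First I would set up the block construction. Given $0 \le A_j \le A_j'$ ($j=1,2$), define, on $H_j \oplus H_j$,
$$\tilde{A}_j=\begin{pmatrix} A_j' & 0 \\ 0 & A_j\end{pmatrix},\qquad T_j=\begin{pmatrix} 0 & 0 \\ 1 & 0\end{pmatrix}.$$
Exactly as in the scalar case one checks $T_j^*T_j\le 1$ and $T_j^*\tilde{A}_jT_j=\diag(A_j,0)\le \tilde{A}_j$. By the hypothesis $h\in C_{\tilde{A}_1,\tilde{A}_2}$, the defining inequality \eqref{zork} holds for this choice of data.

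The second step is a bookkeeping computation using the block-diagonal structure of the spectral projections of $\tilde{A}_j$. Under the natural identification
$$(H_1\oplus H_1)\otimes (H_2\oplus H_2)\cong \bigoplus_{s_1,s_2\in\{',\,\cdot\}}(H_1\otimes H_2)_{(s_1,s_2)},$$
the operator $h(\tilde{A}_1,\tilde{A}_2)$ is block-diagonal, acting as $h(A_1^{s_1},A_2^{s_2})$ on the $(s_1,s_2)$-summand (writing $A_j^{'}=A_j'$, $A_j^{\cdot}=A_j$). The operator $T_1\otimes T_2$ maps $(',\,')\to(\cdot,\cdot)$ as the identity and kills the other three summands; similarly $T_1\otimes 1$ maps $(s_1,s_2)\to(\cdot,s_2)$ when $s_1='$, and analogously for $1\otimes T_2$. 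Conjugating $h(\tilde{A}_1,\tilde{A}_2)$ by each of these three block operators therefore produces an operator which, restricted to the $(',\,')$ corner, equals $h(A_1,A_2)$, $h(A_1,A_2')$, and $h(A_1',A_2)$ respectively.

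The final step is to compress \eqref{zork} to the $(',\,')$-corner (i.e.\ conjugate by the isometric inclusion $H_1\otimes H_2\hookrightarrow (H_1\oplus H_1)\otimes (H_2\oplus H_2)$ onto that corner). Since compression preserves positivity, this yields
$$h(A_1',A_2')+h(A_1,A_2)-h(A_1,A_2')-h(A_1',A_2)\ge 0,$$
which is precisely Kor\'anyi's matrix monotonicity condition. No real obstacle is expected; the only step requiring some care is verifying the block-decomposition identities for $h(\tilde{A}_1,\tilde{A}_2)$ when the spectra of $A_j$ and $A_j'$ overlap, but this follows routinely from the definition $h(\tilde{A}_1,\tilde{A}_2)=\sum h(\lambda_1,\lambda_2)\tilde{E}^1_{\lambda_1}\otimes \tilde{E}^2_{\lambda_2}$ together with $\tilde{E}^j_\lambda=E^{j'}_\lambda\oplus E^{j}_\lambda$.
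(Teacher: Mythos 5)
Your proposal is correct and follows essentially the same route as the paper: the same $2\times 2$ block operators $\tilde{A}_i$ and $T_i$, the same inequality \eqref{zork} applied to the pair $(\tilde{A}_1,\tilde{A}_2)$, and what you call "compressing to the $(',\,')$-corner" is exactly the paper's step of testing against vectors of the form $\begin{pmatrix}x_1\cr 0\cr\end{pmatrix}\otimes\begin{pmatrix}x_2\cr 0\cr\end{pmatrix}$ and their sums. Your more explicit bookkeeping of the four tensor sectors is a clean way to organize the computation, but it is not a different argument.
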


\begin{proof} Let $0<A_i\le A_i'$ and put
$\tilde{A}_i=\begin{pmatrix}A_i' & 0\cr 0 & A_i\cr\end{pmatrix},\, T_i=\begin{pmatrix} 0 & 0\cr 1 & 0\cr
\end{pmatrix}.$
Since ${T_i}^*\tilde{A}_iT_i\le \tilde{A}_i$, an interpolation function $h$ will satisfy
the interpolation inequality \eqref{zork} with $A_i$ replaced by $\tilde{A_i}$. Applying this inequality to
vectors of the form $\begin{pmatrix}x_1\cr 0\cr\end{pmatrix}\otimes\begin{pmatrix}x_2\cr 0\cr\end{pmatrix}$ we readily obtain
\begin{align*}&\left\langle h(A_1',A_2')x_1\otimes x_2,x_1\otimes x_2\right\rangle-\left\langle h(A_1,A_2')x_1\otimes x_2,x_1\otimes x_2\right\rangle\\
&-
\left\langle h(A_1',A_2)x_1\otimes x_2,x_1\otimes x_2\right\rangle+\left\langle h(A_1,A_2)x_1\otimes x_2,x_1\otimes x_2\right\rangle\ge 0.
\end{align*}
The same result obtains with $x_1\otimes x_2$ replaced by a sum $x_1\otimes x_2+x_1'\otimes x_2'+\ldots$,
i.e., $h$ is matrix monotone.
\end{proof}

\begin{rem} Assume that $f$ is of the form $f(\lambda_1,\lambda_2)=g_1(\lambda_1)+g_2(\lambda_2)$.
Then $f$ is matrix monotone for all $g_1$, $g_2$ and $f$ is an
interpolation function if and only if $g_1,g_2\in P'$. In order to disregard "trivial" monotone functions
of the above type, Korányi \cite{K} imposed the normalizing assumption (a) $f(\lambda_1,0)=f(0,\lambda_2)=0$
for all $\lambda_1,\lambda_2$.
\end{rem}

It follows from Lemma \ref{im} and the proof of \cite[Theorem 4]{K} that, if $h$ is a $C^2$-smooth interpolation
function, then the function
$$k(x_1,x_2;y_1,y_2)=\frac {h(x_1,x_2)-h(x_1,y_2)-h(y_1,x_2)+h(y_1,y_2)}{(x_1-y_1)(x_2-y_2)}$$
is \textit{positive definite} in the sense that $\sum_m\sum_n k(x_m,y_m;x_n,y_n)\alpha_m\bar{\alpha}_n\ge 0$
for all finite sequences of positive numbers $x_j,y_k$ and all complex numbers $\alpha_l$.
(The proof uses Löwner's matrix.) Korányi uses
essentially this positive definiteness condition (and condition (a) in the remark above)
to deduce an integral representation formula
for $h$ as an integral of products of Pick functions. See Theorem 3 in \cite{K}. However, in contrast to our situation, Korányi
considers functions monotone on the rectangle $(-1,1)\times(-1,1)$, so this last result cannot be
immediately applied. (It easily implies local representation formulas, valid in finite rectangles, but
these representations do not appear to be very natural from our point of view.)

This is not the right place to attempt to extend Korányi's methods to functions on $\R_+\times\R_+$; it would
seem more appropriate to give a more direct characterization of the classes $C_{A_1,A_2}$ or of the class of interpolation functions. At present, we do
not know if there is an interpolation function which is not representable in the form \eqref{p2}.
%and I will
%here settle for asking the question whether or not this is the case.

\section{Proof of the $K$-property} \label{appa}
In this section we extend the result of Theorem \ref{mlem} to obtain the full proof
of Theorem \ref{mthm}. The discussion is in principle not hard, but it does require some care to keep track of both norms when reducing to a finite-dimensional case.

Recall first that, by Lemma \ref{diagred},
it suffices to consider the diagonal case $\overline{\calH}=\overline{\calK}$.

To prove Theorem \ref{mthm} we fix a regular Hilbert couple $\overline{\calH}$; we must prove that it has the \Kpr
(see \textsection \ref{capa}). By Theorem \ref{mlem}, we know that this is true if $\overline{\calH}$ is finite dimensional and the
associated operator only has eigenvalues of unit multiplicity.

We shall use a weak* type compactness result (\cite{A2}).
To formulate it, let
$\bo_1(\,\overline{\calH}\,)$ be the unit ball in the space $\bo(\,\overline{\calH}\,)$.
Moreover, let $\Sigma_t$ be
the sum $\calH_0+\calH_1$ normed by $\left\|\,x\,\right\|_{\Sigma_{\,t}}^{\, 2}:=K(t,x)$. Note that $\|\cdot\|_{\Sigma_t}$ is an equivalent
norm on $\Sigma$ and that
$\Sigma_{\,1}=\Sigma$ isometrically.
We denote by
$\bo_1\left(\,\Sigma_{\,t}\,\right)$ the unit ball in the space $\bo\left(\,\Sigma_{\,t}\,\right)$.

In view of Remark \ref{simrem}, one has the identity
\begin{equation}\label{thecomp}\bo_1(\,\overline{\calH}\,)=
\bigcap_{t\in\R_+}\bo_1\left(\,\Sigma_{\,t}\,\right).\end{equation}
We shall use this to define a compact topology on $\bo_1(\,\overline{\calH}\,)$.

\begin{lem} \label{complem} The subset $\bo_1\left(\,\overline{\calH}\,\right)\subset \bo_1\left(\,\Sigma\,\right)$ is compact relative to the
weak operator topology inherited from $\bo\left(\,\Sigma\,\right)$.
\end{lem}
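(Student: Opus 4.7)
The strategy is to combine the identity \eqref{thecomp} with a standard Banach--Alaoglu argument. Since $\Sigma = \Sigma_1$ is a Hilbert space (Lemma \ref{kcalc}), its unit ball $\bo_1(\Sigma)$ is WOT-compact; this follows from Tychonoff applied to the natural embedding $T \mapsto \bigl(\langle Tx, y\rangle_\Sigma\bigr)_{x,y \in \Sigma}$ into a product of discs, together with the observation that the image is closed under the product topology. Hence it suffices to show that $\bo_1(\overline{\calH}) \subset \bo_1(\Sigma)$ and that $\bo_1(\overline{\calH})$ is WOT-closed in $\bo_1(\Sigma)$.

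The inclusion is immediate from \eqref{thecomp} upon taking $t = 1$. For the closedness, by \eqref{thecomp} it is enough to check that, for each fixed $t > 0$, the set $\bo_1(\Sigma_t)$ is WOT-closed inside $\bo_1(\Sigma)$.

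The crux is that the norms $\|\cdot\|_{\Sigma}$ and $\|\cdot\|_{\Sigma_t}$ are equivalent on the common underlying vector space $\Sigma$ (one readily checks $\min(1,t)\,K(1,x) \le K(t,x) \le \max(1,t)\,K(1,x)$). Since equivalent Banach norms share the same continuous linear functionals, they share the same weak topology. Thus, if a net $T_\alpha \in \bo_1(\Sigma_t)$ converges in the WOT of $\Sigma$ to some $T \in \bo(\Sigma)$, then for every $x \in \Sigma$ we have $T_\alpha x \to T x$ weakly in $\Sigma$, hence also weakly in the Hilbert space $\Sigma_t$. Lower semicontinuity of the Hilbert norm under weak convergence then gives
$$\|T x\|_{\Sigma_t} \le \liminf_\alpha \|T_\alpha x\|_{\Sigma_t} \le \|x\|_{\Sigma_t},$$
so $T \in \bo_1(\Sigma_t)$, as required.

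There is no serious obstacle here; the only substantive point is the coincidence of the weak topologies on $\Sigma$ and $\Sigma_t$, which is what allows the lower-semicontinuity argument to pass between the two Hilbert structures. Combining the above, $\bo_1(\overline{\calH})$ is a WOT-closed subset of the WOT-compact set $\bo_1(\Sigma)$, hence WOT-compact.
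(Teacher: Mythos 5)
Your proof is correct and follows the same route as the paper: establish WOT-compactness of $\bo_1(\Sigma)$ via Alaoglu, and then use \eqref{thecomp} to write $\bo_1\bigl(\overline{\calH}\bigr)$ as an intersection of WOT-closed subsets. The only difference is that where the paper says "it is clear that $\bo_1(\Sigma)\cap\bo_1(\Sigma_t)$ is weak operator closed,'' you actually supply the argument (norm equivalence of $\|\cdot\|_\Sigma$ and $\|\cdot\|_{\Sigma_t}$ gives identical weak topologies on $\Sigma$, and lower semicontinuity of the Hilbert norm then yields the closedness), which is a welcome bit of detail.
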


Recall that the \textit{weak operator topology on $\bo\left(\,H\,\right)$} is the weakest topology such that a net
$T_i$ converges to the limit $T$ if
the inner product $\left\langle T_ix,y\right\rangle_H$ converges to $\left\langle Tx,y\right\rangle_H$ for all $x,y\in H$.

\begin{proof}[Proof of Lemma \ref{complem}] The weak operator topology coincides on the unit ball $\bo_1\left(\,\Sigma\,\right)$ with
the weak*-topology, which
is compact, due to Alaoglu's theorem (see \cite{Mu}, Chap. 4 for details). It is clear that for a fixed $t>0$, the subset
$\bo_1\left(\,\Sigma\,\right)\cap \bo_1\left(\,\Sigma_{\,t}\,\right)$ is weak operator closed in $\bo_1\left(\,\Sigma\,\right)$; hence
it is also compact. In view of \eqref{thecomp}, the set $\bo_1\left(\,\overline{\calH}\,\right)$ is an intersection
of compact sets. Hence the set $\bo_1\left(\,\overline{\calH}\,\right)$ is itself compact, provided that we endow it with the subspace topology inherited
from $\bo_1\left(\,\Sigma\,\right)$.
\end{proof}

Denote by $P_n$ the projections $P_n=E_{\sigma(A)\cap\left[n^{-1},n\right]}$ on $\calH_0$
where $E$ is the spectral resolution of $A$ and $n=1,2,3,\ldots$. Consider the couple
$$\overline{\calH^{(n)}}=(P_n\left(\calH_0\right),P_n\left(\calH_1\right)),$$
the associated operator of which is the compression $A_n$ of $A$ to the subspace $P_n\left(\calH_0\right)$.
Note that the norms in the couple $\overline{\calH^{(n)}}$ are equivalent, i.e.,
the associated operator $A_n$ is bounded above and below.

We shall need two lemmas.

\begin{lem} \label{wll} If $\overline{\calH^{(n)}}$ has the \Kpr for all $n$, then so
does $\overline{\calH}$.
\end{lem}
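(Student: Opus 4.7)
The plan is to combine projection onto the sub-couples $\overline{\calH^{(n)}}$ (where the hypothesis applies) with the weak-operator compactness of $\bo_1(\overline{\calH})$ from Lemma~\ref{complem}. I would begin with two preliminary observations: (a)~for any $z\in P_n\Sigma$, the optimal decomposition of Lemma~\ref{kcalc} lies in $P_n\calH_i$ (since $P_n$ commutes with $A$), so $K(t,z;\overline{\calH^{(n)}})=K(t,z;\overline{\calH})$; and (b)~for every $z\in\Sigma$, one has $\|(1-P_n)z\|_\Sigma^{\,2}=\int_{\lambda\notin[n^{-1},n]}\tfrac{\lambda}{1+\lambda}\,d\mu_z(\lambda)\to 0$ by dominated convergence, giving $P_nz\to z$ in the $\Sigma$-norm.

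The core construction is: for each $n$, produce a contraction $\hat T_n\in\bo_1(\overline{\calH})$ with $\hat T_nx^0=P_ny^0$. Once such a sequence is at hand, Lemma~\ref{complem} yields a weak-operator cluster point $T\in\bo_1(\overline{\calH})$; weak-operator continuity of the evaluation $S\mapsto Sx^0$ combined with $P_ny^0\to y^0$ in the $\Sigma$-norm then forces $Tx^0=y^0$. To build $\hat T_n$, I would show that there exists $m=m(n)\ge n$ with
\begin{equation*}
K(t,P_ny^0)\,\le\, K(t,P_mx^0)\qquad\text{for all }t>0,\qquad(\ast)
\end{equation*}
whereupon the elements $P_mx^0$ and $P_ny^0$ of $P_m\Sigma$ satisfy the required $K$-inequality in $\overline{\calH^{(m)}}$. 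Applying the hypothesis to the bounded-operator couple $\overline{\calH^{(m)}}\oplus\overline{\calH^{(n)}}$ together with Lemma~\ref{diagred} then provides a contraction $S_n:\overline{\calH^{(m)}}\to\overline{\calH^{(n)}}$ with $S_nP_mx^0=P_ny^0$, and $\hat T_n:=S_nP_m$ is as required.

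The technical heart of the proof, and the step I expect to be the main obstacle, is securing~$(\ast)$. Pointwise one has $K(t,P_mx^0)\uparrow K(t,x^0)\ge K(t,y^0)\ge K(t,P_ny^0)$; on any compact subinterval $[a,b]\subset(0,\infty)$, the estimate $\tfrac{t\lambda}{1+t\lambda}\le\max(b,1)\,\tfrac{\lambda}{1+\lambda}$ together with $x^0\in\Sigma$ upgrades this to uniform convergence $K(\cdot,P_mx^0)\to K(\cdot,x^0)$ on $[a,b]$. The behaviour at the endpoints $t\to 0$ and $t\to\infty$ must be handled separately via the identities $\lim_{t\to 0}K(t,z)/t=\|z\|_1^{\,2}$ and $\lim_{t\to\infty}K(t,z)=\|z\|_0^{\,2}$ combined with the global inequality at those limits; a case analysis according as $x^0,y^0$ lie in $\calH_0$, in $\calH_1$, or in neither then yields~$(\ast)$ for $m$ sufficiently large, completing the argument.
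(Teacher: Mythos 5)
Your overall architecture is the same as the paper's: project $x^0,y^0$ into the truncated couples, apply the hypothesis there, and use the weak-operator compactness of Lemma~\ref{complem} to extract a limit operator. Two points deserve attention.

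First, a minor inefficiency: you invoke the hypothesis on the couple $\overline{\calH^{(m)}}\oplus\overline{\calH^{(n)}}$, which is \emph{not} one of the couples $\overline{\calH^{(k)}}$ for which the $K$-property is hypothesized. This detour is unnecessary: since $n\le m$, both $P_mx^0$ and $P_ny^0$ lie in $\Sigma\left(\overline{\calH^{(m)}}\right)$, and by your observation (a) the $K$-functionals agree on this subspace, so the \emph{diagonal} $K$-property of $\overline{\calH^{(m)}}$ already produces the required contraction; post-composing with $P_m$ gives $\hat T_n$.

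The genuine gap is in securing $(\ast)$, and you have correctly identified it as the crux but underestimated its difficulty. The problem is that $(\ast)$ as stated, with no margin, can fail at the endpoints no matter how large $m$ is. Consider the degenerate case $\|P_ny^0\|_0=\|x^0\|_0$: then $K(t,P_ny^0)\to\|x^0\|_0^{\,2}$ as $t\to\infty$, while $K(t,P_mx^0)\to\|P_mx^0\|_0^{\,2}<\|x^0\|_0^{\,2}$ for every finite $m$ whenever $P_mx^0\ne x^0$, so $(\ast)$ is violated for $t$ large. Monotone convergence $K(\cdot,P_mx^0)\uparrow K(\cdot,x^0)$, even uniform on compacts, is of no help here. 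This is precisely why the paper does not try to prove $(\ast)$ directly; instead it fixes $\rho>1$, works throughout under the strengthened hypothesis $K(t,y^0)<\rho^{-1}K(t,x^0)$, uses the fact that $K(t,P_ny^0)\le C_n\min\{1,t\}$ (because $A_n$ is bounded above and below), and then a concavity argument: pick $[t_0,t_1]$ where $K(\cdot,P_ny^0)$ has essentially saturated its limits at the two ends, use uniform-on-compacts convergence to dominate on $[t_0,t_1]$ with factor $\rho$, and propagate the inequality to $(0,t_0]$ and $[t_1,\infty)$ using concavity of $K$. The final passage from the margin $\rho>1$ back to the margin-free $K$-property is handled by a further compactness/clustering step (``since $\rho>1$ was arbitrary''). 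Your sketch of a case analysis by membership of $x^0,y^0$ in $\calH_0$, $\calH_1$, or neither does not address the tightness issue above and would not close the argument without introducing a margin of this kind.
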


\begin{proof} Note that $\left\|\,P_n\,\right\|_{\,\bo(\overline{\calH})}=1$ for all $n$, and
that $P_n\to 1$ as $n\to \infty$ relative to the strong operator topology on
$\bo\left(\Sigma\right)$. Suppose that
$x^0,y^0\in\Sigma$ are elements such that, for some $\rho>1$,
\begin{equation}\label{bashyp}K\left(t,y^0\right)< \frac 1 \rho\, K\left(t,x^0\right),\qquad t>0.\end{equation}
Then
$K\left(t,P_ny^0\right)\le K\left(t,y^0\right)<\rho^{-1}K\left(t,x^0\right)$. Moreover,
the identity $K\left(t,P_ny^0\right)=\left\langle \frac {tA_n}{1+tA_n}P_ny^0,P_ny^0\right\rangle_0$
shows that we have an estimate of the form $K(t,P_ny^0)\le C_n\min\{1,t\}$
for $t>0$ and large enough $C_n$ (this follows since $A_n$ is bounded above and below).

The functions $K\left(t,P_mx^0\right)$ increase monotonically, converging
uniformly on compact subsets of $\mathbf{R}_+$ to $K\left(t,x^0\right)$
when $m\to\infty$. By concavity of the function $t\mapsto K\left(t,P_mx^0\right)$
we will then have
\begin{equation}\label{bycc}K\left(t,P_ny^0\right)< \frac 1 {\tilde{\rho}}K\left(t,P_mx^0\right),
\qquad t\in\R_+,\end{equation}
provided that $m$ is sufficiently large, where $\tilde{\rho}$ is
any number in the interval $1<\tilde{\rho}<\rho$.

Indeed, let $A=\lim_{t\to\infty} K\left(t,P_ny^0\right)$ and $B=\lim_{t\to 0}K\left(t,P_ny^0\right)/t$.
Take points $t_0<t_1$ such that $K(t,P_ny^0)\ge A/\rho'$ when $t\ge t_1$ and
$K(t,P_ny^0)/t\le B\rho'$ when $t\le t_0$. Here $\rho'$ is some number in the interval $1<\rho'<\rho$.

Next use \eqref{bashyp} to choose $m$ large enough that
$K(t,P_mx^0)>\rho K(t,P_ny^0)$ for all $t\in[t_0,t_1]$. Then
$K(t,P_mx^0)>(\rho/\rho')K(t,P_ny^0)$ for $t=t_1$, hence for all $t\ge t_1$, and
$K(t,P_mx^0)/t>(\rho/\rho')K(t,P_ny^0)/t$ for $t=t_0$ and hence also when $t\le t_0$.
Choosing $\rho'=\rho/\tilde{\rho}$ now
establishes \eqref{bycc}.

Put $N=\max\{m,n\}$. If $\overline{\calH^{(N)}}$ has the \Kpr,
we can find a map
$T_{nm}\in \bo_1(\,\overline{\calH}\,)$ such that
$T_{nm}P_mx^0=P_ny^0$. (Define $T_{mn}=0$ on the orthogonal complement of $P_N\left(\calH_0\right)$ in $\Sigma$.)
In view of Lemma \ref{complem}, the maps
$T_{nm}$ must cluster at some point $T\in \bo_1(\,\overline{\calH}\,)$.
It is clear that $Tx^0=y^0$. Since $\rho>1$ was arbitrary, we have shown that $\overline{\calH}$ has the \Kpr.
\end{proof}

\begin{lem} \label{52} Given $x^0,y^0\in \calH^{(n)}_0$ and a number $\epsilon>0$
there exists a positive integer $n$ and a finite-dimensional couple $\overline{\calV}\subset \overline{\calH^{(n)}}$ such that
$x^0,y^0\in\calV_0+\calV_1$ and
\begin{equation}(1-\epsilon)K\left(t,x;\overline{\calH}\right)\le
K\left(t,x;\overline{\calV}\right)\le (1+\epsilon)K\left(t,x;\overline{\calH}\right),
\quad t>0,\, x\in\calV_0+\calV_1.\end{equation}
Moreover, $\overline{\calV}$ can be chosen so that all eigenvalues of the associated
operator $A_{\overline{\calV}}$ are of unit multiplicity.
\end{lem}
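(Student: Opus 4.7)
The plan is to construct $\calV_0$ as the span of the spectral components of $x^0$ and $y^0$ under a finely partitioned spectral resolution of $A$ on $\calH^{(n)}_0$, and then to enforce the unit-multiplicity condition by adjoining generic vectors.

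First, I use that the compressed operator $A_n$ on $\calH^{(n)}_0$ has spectrum in the compact set $[n^{-1},n]$. Fixing $\eta>0$ (to be tied to $\epsilon$), I partition $[n^{-1},n]$ into finitely many disjoint Borel sets $I_1,\ldots,I_N$ with $\sup I_j/\inf I_j\le 1+\eta$; set $E_j=E_A(I_j)$ and pick $\mu_j\in I_j$. A direct calculation gives the uniform multiplicative bound
\[
(1+\eta)^{-1}\,\frac{t\mu_j}{1+t\mu_j}\;\le\;\frac{t\lambda}{1+t\lambda}\;\le\;(1+\eta)\,\frac{t\mu_j}{1+t\mu_j},\qquad \lambda\in I_j,\ t>0.
\]

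Next I set $\calV_0=\spa\{E_j x^0,E_j y^0:1\le j\le N\}$ and take $\overline{\calV}$ to be the subcouple of $\overline{\calH^{(n)}}$ with inherited norms. Then $x^0=\sum_j E_j x^0\in\calV_0$ (similarly for $y^0$), and $\calV_0$ is invariant under each $E_j$. The inequality $K(t,\cdot\,;\overline{\calH})\le K(t,\cdot\,;\overline{\calV})$ is automatic, since restricting admissible decompositions to $\calV_0$ only raises the infimum. For the reverse, I test with $x=x_0+x_1$ where $x_0=\sum_j\frac{t\mu_j}{1+t\mu_j}E_j x$ and $x_1=x-x_0$, both of which lie in $\calV_0$ by $E_j$-invariance; applying Lemma \ref{kcalc} together with the multiplicative bound yields $\|x_0\|_0^2+t\|x_1\|_1^2\le (1+\eta)^2 K(t,x;\overline{\calH})$. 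Choosing $\eta$ small then gives $K(t,x;\overline{\calV})\le (1+\epsilon)K(t,x;\overline{\calH})$, which is the nontrivial half of the desired estimate.

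Finally, to enforce unit multiplicity, I note that $A_\calV=P_{\calV_0}AP_{\calV_0}$ decomposes as a direct sum of compressions to the blocks $W_j:=\calV_0\cap E_j(\calH_0)$, each at most two-dimensional. Eigenvalues from different blocks lie in disjoint intervals $I_j$ and are automatically distinct, so the only issue is within a two-dimensional $W_j$ whose compression is scalar $\lambda_j I_{W_j}$. In this case I enlarge $\calV_0$ by appending a vector $z_j\in E_j(\calH_0)\cap W_j^\perp$ with $P_{W_j}Az_j\ne 0$; the characteristic polynomial of the resulting three-dimensional block factors as $(x-\lambda_j)\bigl[(x-\lambda_j)(x-a_j)-|b_j|^2\bigr]$ with $|b_j|^2>0$, hence has three distinct roots. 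Adding vectors only loosens the infimum defining $K(t,\cdot\,;\overline{\calV})$, so the estimates persist. The main obstacle will be the degenerate case in which $W_j$ lies inside a genuine eigenspace of $A$ of multiplicity $\ge 2$, so that $A$ preserves $E_j(\calH_0)\cap W_j^\perp$ and no admissible $z_j$ exists; I will handle this by refining the partition to isolate each such eigenvalue and then replacing $x^0,y^0$ by arbitrarily small perturbations outside the relevant multi-dimensional eigenspaces, with the resulting slack absorbed by the $\rho>1$ flexibility that is used when this lemma is invoked in the proof of the \Kpr.
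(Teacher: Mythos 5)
Your approach is close in spirit to the paper's but differs at one critical juncture, and the difference is exactly where the gap is.

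Your partition, your multiplicative estimates on $\frac{t\lambda}{1+t\lambda}$, and your choice $\calV_0=\spa\{E_jx^0,E_jy^0\}$ all agree with the paper essentially verbatim, and your proof of the two-sided $K$-functional estimate (using the explicit decomposition $x_0=\sum_j\frac{t\mu_j}{1+t\mu_j}E_jx$ and the $E_j$-invariance of $\calV_0$) is correct. But where you insist on \emph{inherited} norms for the subcouple, the paper replaces the $\calH_1$-norm by the step-function approximation $\|x\|_{\calW_1}^{\,2}=\left\langle w(A)x,x\right\rangle_0$, and this difference is what makes the unit-multiplicity condition attainable in the paper's proof and not in yours.

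The obstruction you flagged at the end is not a side case that can be handled by refinement or by perturbing the vectors; it is fatal for the inherited-norm version. Concretely: if $\lambda$ is an isolated eigenvalue of $A$ with eigenspace $V_\lambda$ of dimension $\ge 2$ and the projections $E_{\{\lambda\}}x^0$, $E_{\{\lambda\}}y^0$ are linearly independent in $V_\lambda$, then once $I_j$ is small enough we have $E_j(\calH_0)=V_\lambda$, $W_j\subset V_\lambda$, and the compression of $A$ to \emph{any} subspace of $V_\lambda$ is scalar $\lambda I$. Refining the partition does nothing (one lands on the atom $\{\lambda\}$), and there is no admissible $z_j$ since $A$ acts as $\lambda$ on all of $E_j(\calH_0)$. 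Your proposed remedy — perturbing $x^0,y^0$ — does not prove the lemma as stated, which requires $x^0,y^0\in\calV_0+\calV_1$ exactly; and a perturbation ``outside $V_\lambda$'' cannot affect the $E_{\{\lambda\}}$-component, which is by definition the part \emph{inside} $V_\lambda$. Deferring the slack to the $\rho>1$ elbow room in the parent proof would require rewriting Lemma \ref{52} to assert only approximate membership, i.e., a different lemma.

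The fix is simple and is precisely what the paper does: drop the inherited $\calH_1$-norm. Keep $\calV_0$ as you constructed it, but define the $\calV_1$-norm by $\|x\|_{\calV_1}^{\,2}=\left\langle A_{\overline{\calV}}\,x,x\right\rangle_0$, where $A_{\overline{\calV}}$ is a positive matrix obtained by perturbing the compression $P_{\calV_0}AP_{\calV_0}$ (or, as the paper does, the compression of $w(A_n)$) by as little as one likes so that all eigenvalues become simple. A norm-$\delta$ perturbation of the generating operator changes $K(t,\cdot)$ by a factor within $1\pm C\delta$ uniformly in $t$ (this is the content of the Lipschitz estimate on $k_t(\lambda)=\frac{t\lambda}{1+t\lambda}$, or of Lemma \ref{yneq} later in the paper), and that is exactly the slack the lemma's conclusion allows — the $(1\pm\epsilon)$ is there precisely so that $\overline{\calV}$ need not sit inside $\overline{\calH}$ isometrically. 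With that change, your construction of $\calV_0$ and your $K$-functional estimates go through unchanged, and you no longer need the case analysis on whether $W_j$ is scalar or whether an admissible $z_j$ exists.
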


\begin{proof} Let $A_n$ be the operator associated with the couple $\overline{\calH^{(n)}}$;
thus $1/n\le A_n\le n$.

Take $\eta>0$ and let $\{\lambda_i\}_1^N$ be a finite subset
of $\sigma\left(A_n\right)$ such that $\sigma\left(A_n\right)\subset
\cup_1^N E_i$ where $E_i=(\lambda_i-\eta/2,\lambda_i+\eta/2)$.
We define a Borel function $w:\sigma\left(A_n\right)\to\sigma\left(A_n\right)$ by
$w(\lambda)=\lambda_i$ on $E_i\cap\sigma\left(A_n\right)$; then
$\left\|\,w\left(A_n\right)-A_n\,\right\|_{\,\bo\left(\calH_0\right)}\le\eta$.

Let $k_t(\lambda)=\frac {t\lambda}{1+t\lambda}$. It is easy to check
that the Lipschitz constant of the restriction $k_t\bigm|\sigma\left(A_n\right)$ is bounded above by
$C_1\min\{1,t\}$ where $C_1=C_1(n)$ is independent of $t$. Hence
$$\left\|\,k_t\left(w\left(A_n\right)\right)-k_t\left(A_n\right)\,\right\|_{\,\bo\left(\calH_0\right)}\le C_1\eta\min\left\{1,t\right\}.$$
It follows readily that
\begin{align*}
\left|\left\langle\left(k_t\left(w\left(A_n\right)\right)-k_t\left(A_n\right)\right)x,x\right\rangle_0\right|\le C_1\eta\min\{1,t\}\left\|\,x\,\right\|_0^{\,2},
\quad x\in P_n\left(\calH_0\right).\end{align*}
Now let $c>0$ be such that $A\ge c$. The elementary inequality $k_t(c)\ge (1/2)\min\{1,ct\}$ shows that
\begin{align*}\left\langle k_t\left(A_n\right)x,x\right\rangle_0
\ge C_2\min\{1,t\}\left\|\,x\,\right\|_0^{\,2},\quad x\in P_n\left(\calH_0\right),
\end{align*}
where $C_2=(1/2)\min\{1,c\}$.
Combining these estimates, we deduce that
\begin{equation}\label{54}\left|\left\langle k_t\left(w\left(A_n\right)\right)x,x\right\rangle_0-\left\langle k_t\left(A_n\right)x,x\right\rangle_0\right|
\le C_3\eta\left\langle k_t\left(A_n\right)x,x\right\rangle_0,\quad x\in P_n\left(\calH_0\right)\end{equation}
for some suitable constant $C_3=C_3(n)$.

Now pick unit vectors $e_i,f_i$ supported by the spectral sets $E_i\cap\sigma(A_n)$
such that $x^0$ and $y^0$ belong to the space $\calW$ spanned by
$\{e_i,f_i\}_1^N$. Put $\calW_0=\calW_1=\calW$ and define norms on
those spaces by
$$\left\|\,x\,\right\|_{\calW_0}=\left\|\,x\,\right\|_{\calH_0}\quad ,\quad
\left\|\,x\,\right\|_{\calW_1}^{\,2}=\left\langle w\left(A\right)x,x\right\rangle_{\calH_0}.$$
The operator associated with $\overline{\calW}$ is then the compression
of $w(A_n)$ to $\calW_0$, i.e.,
\begin{equation*}\left\|\,x\,\right\|_{\calW_1}^{\,2}=\left\langle A_{\overline{\calW}}\,x,x\right\rangle_{\calW_0}=
\left\langle w(A_n)x,x\right\rangle_{\calH_0},\qquad x\in\calW.\end{equation*}
Let $\epsilon=2C_3\eta$ and observe that, by \eqref{54}
\begin{equation}\label{56}
\left|\,K\left(t,x;\overline{\calW}\right)-K\left(t,x;\overline{\calH}\right)\,\right|\le
(\epsilon/2)K\left(t,x;\overline{\calH}\right),\quad
f\in\calW.\end{equation}
The eigenvalues of $A_{\overline{\calW}}$ typically have multiplicity
$2$. To obtain unit multiplicity, we perturb $A_{\overline{\calW}}$
slightly to a positive matrix $A_{\overline{\calV}}$ such that
$\left\|\,A_{\overline{\calW}}-A_{\overline{\calV}}\,\right\|_{\,\bo\left(\calH_0\right)}<\epsilon/2C_3$.
Let $\overline{\calV}$ be the couple associated with
$A_{\overline{\calV}}$, i.e., put $\calV_i=\calW$ for $i=0,1$ and
$$\left\|\,x\,\right\|_{\calV_0}=\left\|\,x\,\right\|_{\calW_0}\quad
\text{and}\quad \left\|\,x\,\right\|_{\calW_1}^{\,2}=\left\langle A_{\overline{\calV}}\,x,x\right\rangle_{\calV_0}.$$
It is then straightforward to check that
$$\left|\,K\left(t,f;\overline{\calW}\right)-K\left(t,f;\overline{\calV}\right)\,\right|\le
(\epsilon/2)K\left(t,f;\overline{\calH}\right),\quad f\in\calW.$$
Combining this with the estimate \eqref{56}, one finishes the proof of the lemma.
\end{proof}

\begin{proof}[Proof of Theorem \ref{mthm}] Given two elements $x^0,y^0\in \Sigma$ as in \eqref{bashyp}
we write $x^n=P_n\left(x^0\right)$ and $y^n=P_n\left(y^0\right)$. By the proof of Lemma \ref{wll}
we then have $K\left(t,y^n\right)\le \tilde{\rho}^{-1}K\left(t,x^n\right)$ for large enough $n$,
where $\tilde{\rho}$ is any given number in the interval $(1,\rho)$.

We then use Lemma \ref{52} to choose a finite-dimensional sub-couple
$\overline{\calV}\subset \overline{\calH^{(n)}}$
such that
\begin{align*}K\left(t,y^n;\overline{\calV}\right)&\le (1+\epsilon)K\left(t,y^n;\overline{\calH}\right)\\
&<\tilde{\rho}^{-1}K\left(t,x^n;\overline{\calV}\right)+\epsilon\left(K\left(t,x^n;\overline{\calH}\right)+
K\left(t,y^n;\overline{\calH}\right)\right).\\
\end{align*}
Here $\epsilon>0$ is at our disposal.

Choosing $\epsilon$ sufficiently small, we can arrange that
\begin{equation}\label{pertcon}K\left(t,y^n;\overline{\calV}\right)\le K(t,x^n;\overline{\calV}),\quad t>0.\end{equation}

By Theorem \ref{mlem}, the condition \eqref{pertcon} implies the existence
of an operator $T'\in \bo_1\left(\,\overline{\calV}\,\right)$ such that
$T'x^n=y^n$. Considering the canonical inclusion and projection
$$I:\Sigma\left(\calV\right)\to\Sigma\left(\calH\right)\quad \text{and}\quad
\Pi:\Sigma\left(\calH\right)\to\Sigma\left(\calV\right),$$
we have, by virtue of Lemma \ref{52},
$$\left\|\,I\,\right\|_{\,\bo(\overline{\calV};\overline{\calH})}^{\,2}\le (1-\epsilon)^{-1}
\quad \text{and}\quad \left\|\,\Pi\,\right\|_{\,\bo(\overline{\calH};\overline{\calV})}^{\,2}\le 1+\epsilon.$$
Now let $T=T_\eps:=IT'\Pi\in \bo(\,\overline{\calH^{(n)}}\,)$. Then
$\left\|\,T\,\right\|^{\,2}\le \frac {1+\epsilon}{1-\epsilon}$ and $Tx^n=y^n$.
As $\epsilon\downarrow 0$ the operators $T_\epsilon$ will cluster at some
point $T\in \bo_1(\,\overline{\calH^{(n)}}\,)$ such that $Tx^n=y^n$ (cf. Lemma
\ref{complem}).

We have shown that $\overline{\calH^{(n)}}$ has the \Kpr. In view of
Lemma \ref{wll}, this implies that $\overline{\calH}$ has the same property.
The proof of Theorem \ref{mthm} is therefore complete.
\end{proof}

\section{Representations of interpolation functions} \label{repif}

\subsection{Quadratic interpolation methods} Let us say that an interpolation
method defined on regular Hilbert couples taking values in Hilbert spaces is a \textit{quadratic
interpolation method}. (Donoghue \cite{D1} used the same phrase in a somewhat wider sense, allowing
the methods to be defined on non-regular Hilbert couples as well.)

If $F$ is an exact quadratic interpolation method, and $\overline{\calH}$ a Hilbert couple, then by
Donoghue's theorem \ref{dthm} there exists a positive Radon measure $\vro$ on $[0,\infty]$ such that
$F\left(\,\overline{\calH}\,\right)=\calH_\vro$, where the latter space is defined
by the familiar norm $\|x\|_\vro^{\,2}=\int_{[0,\infty]}\left(1+t^{-1}\right)K(t,x)\,d\vro(t)$.

A priori, the measure $\vro$ could depend not only on $F$ but also on the particular $\overline{\calH}$.
That $\vro$ is independent of $\overline{\calH}$ can be realized in the following way.
Let $\overline{\calH'}$ be a regular Hilbert couple such that every positive rational number is
an eigenvalue of the associated operator. Let $B'$ be the operator associated with the exact quadratic
interpolation space $F\left(\,\overline{\calH'}\,\right)$.
There is then clearly a unique $P'$-function $h$
on $\sigma\left(A'\right)$ such that $B'=h\left(A'\right)$, viz. there is a unique positive Radon measure $\vro$ on $[0,\infty]$
such that
$F\left(\,\overline{\calH'}\,\right)=\calH_\rho'$ (see \textsection \ref{mex} for the notation).

If $\overline{\calH}$ is any regular Hilbert couple, we can form the direct sum
$\overline{\calS}=\overline{\calH'}\oplus\overline{\calH}$. Denote by $\tilde{A}$ the corresponding
operator and
 let $\tilde{B}=\tilde{h}(\tilde{A})$ be the operator corresponding to the exact quadratic
 interpolation space $F\left(\,\overline{\calS}\,\right)$. Then $\tilde{h}(\tilde{A})
 =\tilde{h}(A')\oplus\tilde{h}\left(A\right)=h\left(A'\right)\oplus \tilde{h}(A)$.
 This means that $\tilde{h}\left(A'\right)=h\left(A'\right)$, i.e. $\tilde{h}=h$. In particular, the
 operator $B$ corresponding to the exact interpolation space
 $F(\,\overline{\calH}\,)$ is equal to $h\left(A\right)$. We have shown that
 $F(\,\overline{\calH}\,)=\calH_\vro$. We emphasize our conclusion with the following theorem.

 \begin{thm} \label{flthm} There is a one-to-one correspondence $\vro\mapsto F$
 between positive Radon measures and exact quadratic interpolation methods.
 \end{thm}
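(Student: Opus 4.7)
The plan is to verify both the surjectivity and the injectivity of the map $\vro\mapsto F_\vro$, where $F_\vro$ is the method defined by $F_\vro(\overline{\calH}) := \calH_\vro$ with norm given by \eqref{*def}. That each $F_\vro$ is a well-defined exact quadratic interpolation method was already established in \textsection\ref{mex}: the contraction inequality \eqref{contr} gives $\|Tx\|_{\calK_\vro}\le\|x\|_{\calH_\vro}$ whenever $T:\overline{\calH}\to\overline{\calK}$ is a contraction, and quadraticity is obvious from the integral formula. So the real content is bijectivity.

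For surjectivity I would follow verbatim the two paragraphs preceding the theorem: fix once and for all a regular Hilbert couple $\overline{\calH'}$ whose associated operator $A'$ has every positive rational as an eigenvalue (so $\sigma(A')$ is dense in $\R_+$). Given an exact quadratic interpolation method $F$, Donoghue's theorem \ref{dthm} applied to $\overline{\calH'}$ yields a positive Radon measure $\vro$ on $[0,\infty]$ and a Pick function $h\in P'$ of the form \eqref{Pick} such that $F(\overline{\calH'})=\calH_\vro$, i.e. the associated operator is $B'=h(A')$. To transport this to an arbitrary regular couple $\overline{\calH}$, form $\overline{\calS}=\overline{\calH'}\oplus\overline{\calH}$; then the operator for $F(\overline{\calS})$ is $\tilde h(\tilde A)$ for some Pick function $\tilde h$, but splitting along the direct sum forces $\tilde h(A')=h(A')$, and since $\sigma(A')$ is dense in $\R_+$ and Pick functions are continuous on $\R_+$, we get $\tilde h=h$ on all of $\R_+$. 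Consequently the operator associated to $F(\overline{\calH})$ is $h(A)$, so $F(\overline{\calH})=\calH_\vro$ with the \emph{same} measure $\vro$.

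For injectivity, suppose $\vro_1,\vro_2$ are positive Radon measures on $[0,\infty]$ with $F_{\vro_1}=F_{\vro_2}$. Evaluating on the universal couple $\overline{\calH'}$ gives $h_1(A')=h_2(A')$, where $h_i$ is the Pick function associated to $\vro_i$ via \eqref{Pick}. Since $\sigma(A')$ is dense in $\R_+$ and both $h_i$ are continuous on $\R_+$, we conclude $h_1=h_2$ on $\R_+$. The uniqueness of the representing measure in the integral formula \eqref{Pick} (which is the standard Nevanlinna/Stieltjes inversion: $\vro$ is recovered from $h$ by the boundary-value distributional limit of $\operatorname{Im} h$ after the change of variable $\lambda=-1/t$) then forces $\vro_1=\vro_2$.

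The only substantive analytic input beyond results already proved in the paper is the uniqueness of $\vro$ in the Pick representation \eqref{Pick}; this is the main (and entirely classical) obstacle to watch, and it can be reduced to the uniqueness of the Herglotz measure of a Pick function on the upper half plane by the standard conformal change of variables sending $\R_+$ to the real axis. Everything else is a bookkeeping exercise combining Donoghue's theorem \ref{dthm} with the direct-sum trick, which together pin down $\vro$ from $F$ on the single universal couple $\overline{\calH'}$ and then propagate the identity to every regular Hilbert couple.
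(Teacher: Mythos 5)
Your proposal is correct and follows essentially the same route as the paper: the paper's proof is precisely the two paragraphs immediately preceding the theorem (Donoghue's theorem on a universal couple $\overline{\calH'}$ with $\sigma(A')$ dense, then the direct-sum trick to propagate to arbitrary $\overline{\calH}$). The only difference is one of exposition: you make the injectivity step explicit and name its analytic input (uniqueness of the Nevanlinna/Herglotz representing measure), whereas the paper folds this into the parenthetical assertion that the $P'$-function on $\sigma(A')$ corresponds to ``a unique positive Radon measure $\vro$.''
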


We will shortly see that Theorem \ref{flthm} is equivalent to the theorem of Foia\c{s} and Lions \cite{FL}.
As we remarked above, a more general version of the theorem, admitting for non-regular Hilbert couples,
is found in Donoghue's paper \cite{D1}.

\subsection{Interpolation type and reiteration} \label{fansect}
In this subsection, we prove some general facts concerning quadratic interpolation methods; we shall mostly follow Fan \cite{F}.

Fix a function $h\in P'$ of the form
$$h(\lambda)=\int_{[0,\infty]}\frac {(1+t)\lambda}
{1+t\lambda}\, d\vro(t).$$
It will be convenient to write
$\overline{\calH}_h$ for the
corresponding exact interpolation space
$\calH_\vro$.
Thus, we shall denote
$$\|x\|_h^{\,2}=\left\langle h(A)x,x\right\rangle_0=\int_{[0,\infty]}
\left(1+t^{-1}\right)K\left(t,x\right)\,d\vro(t).$$
More generally, we shall use the same notation
when $h$ is any quasi-concave
function on $\R_+$; then $\overline{\calH}_h$ is a quadratic interpolation space, but not necessarily exact.

Recall that, given a function $\typeH$ of one variable, we say
that $\calH_*$ is of type $\typeH$ with respect to
$\overline{\calH}$ if $\left\|\,T\,\right\|_{\,\bo\left(\calH_i\right)}^{\,2}\le M_i$ implies
$\left\|\,T\,\right\|_{\,\bo\left(\calH_*\right)}^{\,2}\le M_0\,\typeH\left(M_1/M_0\right)$.

We shall say that a quasi-concave function $h$ on $\R_+$ is
\textit{of type $\typeH$} if
$\overline{\calH}_h$ is of type $\typeH$ relative to
any regular Hilbert couple $\overline{\calH}$.
The following result somewhat generalizes Theorem \ref{pthm}. The class of
functions of type $\typeH$ clearly forms a convex cone.

\begin{thm} \label{macgen} Let $h$ be of type $\typeH$, where
(i) $\typeH(1)=1$ and $\typeH(t)\le\max\{1,t\}$,
and (ii)
$\typeH$ has left and right derivatives $\theta_{\pm}=H'(1\pm)$
at the point $1$, where $\theta_-\le \theta_+$. Then for any positive constant $c$,
\begin{equation}\label{quasip}\min\left\{\lambda^{\theta_-},
\lambda^{\theta_+}\right\}\le
\frac {h(c\lambda)} {h(c)}\le \max\left\{\lambda^{\theta_-},\lambda^{\theta_+}\right\},
\qquad \lambda\in\R_+.\end{equation}
In particular, if $\typeH(t)$ is differentiable at $t=1$ and $\typeH'(1)=\theta$, then
$h(\lambda)=\lambda^{\,\theta}$, $\lambda\in\R_+$.
\end{thm}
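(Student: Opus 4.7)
The plan is to reduce the statement to a pointwise two-point inequality for $h$ via Theorem \ref{pcomb}, iterate it, and then let $n\to\infty$ so that the one-sided derivatives $\theta_{\pm}$ of $\typeH$ at $1$ surface as exponents. First I would fix any regular Hilbert couple whose associated operator $A$ has $\sigma(A)=[0,\infty)$ (for instance $\calH_0=L^2(\R_+,e^{-\lambda}d\lambda)$ with $\calH_1$ the same space reweighted by $\lambda$, so that $A$ is multiplication by $\lambda$). Since $h$ is of type $\typeH$, $\calH_h$ is of type $\typeH$ with respect to this couple, and Theorem \ref{pcomb}, combined with the continuity of the quasi-concave function $h$ (which forces it to coincide with the continuous representative provided by the theorem on the full support $\sigma(A)=\R_+$), yields
\[
\frac{h(\lambda)}{h(\mu)}\le \typeH\!\left(\frac{\lambda}{\mu}\right),\qquad \lambda,\mu\in\R_+,
\]
or equivalently $h(c\lambda)/h(c)\le \typeH(\lambda)$ for all $c,\lambda>0$.

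Next I would iterate: applying this bound with $c$ successively replaced by $c\lambda^{k/n}$ for $k=0,1,\dots,n-1$ and $\lambda$ replaced by $\lambda^{1/n}$, the telescoping product gives
\[
\frac{h(c\lambda)}{h(c)}\le \typeH\!\left(\lambda^{1/n}\right)^{n},\qquad c,\lambda>0,\ n=1,2,\dots.
\]
Because $\typeH(1)=1$ and $\typeH$ has one-sided derivatives $\theta_{\pm}$ at $1$, we have $\log\typeH(1+s)=\theta_{\pm}s+o(s)$ as $s\to 0^{\pm}$. For $\lambda>1$ set $t_n=\lambda^{1/n}-1\to 0+$ with $n t_n\to\log\lambda$; then
\[
n\log\typeH\!\left(\lambda^{1/n}\right)=\theta_{+}\,n t_n+o(n t_n)\longrightarrow \theta_{+}\log\lambda,
\]
so $\typeH(\lambda^{1/n})^n\to\lambda^{\theta_{+}}$. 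The analogous one-sided computation for $\lambda<1$ (with $t_n\to 0-$) yields $\typeH(\lambda^{1/n})^n\to\lambda^{\theta_{-}}$. Passing to the limit in the iterated inequality gives $h(c\lambda)/h(c)\le\lambda^{\theta_{+}}$ for $\lambda\ge 1$ and $h(c\lambda)/h(c)\le\lambda^{\theta_{-}}$ for $\lambda\le 1$, and since $\theta_{-}\le\theta_{+}$ both sides are bounded by $\max\{\lambda^{\theta_{-}},\lambda^{\theta_{+}}\}$, which is the upper half of \eqref{quasip}.

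For the lower half I would apply the just-proven upper bound with $c$ replaced by $c\lambda$ and $\lambda$ by $1/\lambda$: this gives $h(c)/h(c\lambda)\le\max\{\lambda^{-\theta_{-}},\lambda^{-\theta_{+}}\}=1/\min\{\lambda^{\theta_{-}},\lambda^{\theta_{+}}\}$, which rearranges to $h(c\lambda)/h(c)\ge\min\{\lambda^{\theta_{-}},\lambda^{\theta_{+}}\}$. In the differentiable case $\theta_{-}=\theta_{+}=\theta$ the min and the max coincide, forcing $h(c\lambda)=h(c)\lambda^{\theta}$; setting $c=1$ yields $h(\lambda)=h(1)\lambda^{\theta}$. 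The main obstacle will be the limit step: one must select the correct one-sided derivative according to the sign of $\lambda^{1/n}-1$ and control the error terms honestly using only one-sided regularity of $\typeH$, which is precisely where the asymmetry $\theta_{-}\le\theta_{+}$ manifests itself in the final estimate.
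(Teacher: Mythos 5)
Your proof is correct, and it takes a genuinely different route from the paper's. Both arguments start from the same preliminary step: using Theorem \ref{pcomb} (applied, in your case, to a couple whose associated operator has full spectrum $\sigma(A)=\R_+$, and invoking continuity of $h$ to pass from a $\nu_A$-a.e.\ statement to a pointwise one) to obtain $h(c\lambda)/h(c)\le\typeH(\lambda)$ for all $c,\lambda>0$. After that, the paper observes that $h$ is itself a $P'$-function (hence differentiable on $\R_+$) and exploits this: setting $h_0(t)=h_c(\mu t)/h_c(\mu)$, the tangency $h_0\le\typeH$, $h_0(1)=\typeH(1)=1$ forces $\theta_-\le h_0'(1)=\mu h_c'(\mu)/h_c(\mu)\le\theta_+$, and integrating this logarithmic-derivative inequality from $1$ to $\lambda$ yields \eqref{quasip} in one stroke. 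You instead avoid any appeal to differentiability of $h$: you iterate the functional inequality multiplicatively to get $h(c\lambda)/h(c)\le\typeH(\lambda^{1/n})^{\,n}$ and let $n\to\infty$, so that the one-sided derivatives of $\typeH$ at $1$ emerge as the exponents $\theta_\pm$; the lower bound then follows by the substitution $c\mapsto c\lambda$, $\lambda\mapsto1/\lambda$. Your discrete telescoping argument is the "integrated" (exponentiated) version of the paper's infinitesimal one. What your approach buys is that it needs only one-sided regularity of $\typeH$ at the single point $t=1$ and no smoothness of $h$ at all; what the paper's approach buys is brevity, at the cost of silently invoking that an interpolation function of type $\typeH$ with $\typeH(t)\le\max\{1,t\}$ is necessarily of class $P'$ and hence differentiable.
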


\begin{proof} Replacing $A$ by $cA$, it is easy to see that if $h$ is of type $\typeH$, then so is $h_c(t)=h(ct)/h(c)$.
Fix $\mu>0$ and
consider the function $h_0(t)=h_c(\mu t)/h_c(\mu)$.
By
Theorem \ref{pcomb}, we have $h_0(t)\le \typeH(t)$ for all $t$.
Furthermore $h_0(1)=\typeH(1)=1$ by (i). Since $h_0$ is differentiable,
the assumption (ii) now gives $\theta_-\le h_0'(1)\le \theta_+$,
or
$$\theta_-\le\frac {\mu h_c'(\mu)} {h_c(\mu)}\le \theta_+.$$
Dividing through by $\mu$ and integrating over the interval
$[1,\lambda]$, one now verifies the inequalities in \eqref{quasip}.
\end{proof}

The following result provides a partial converse to Theorem \ref{pcomb}.

\begin{thm} (\cite{F}) \label{fth1}
Let $h\in P'$ and set
$\typeH(t)=\sup_{s>0} h(st)/h(s)$. Then $h$ is of
type $\typeH$.
\end{thm}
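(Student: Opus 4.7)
The plan is to prove this by a direct calculation, exploiting the Pick integral representation of $h$ together with the basic $K$-functional contraction inequality \eqref{contr}.

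First I would write $h$ via its defining measure $\vro$ as in \eqref{Pick}, so that the norm on $\calH_h$ admits the two equivalent descriptions
$$\|x\|_h^{\,2}=\langle h(A)x,x\rangle_0=\int_{[0,\infty]}\left(1+t^{-1}\right)K(t,x)\,d\vro(t).$$
Now fix $T\in\bo\left(\overline{\calH}\right)$ with $\|T\|_{\bo(\calH_i)}^{\,2}\le M_i$ for $i=0,1$, and set $\mu=M_1/M_0$. Inequality \eqref{contr} (applied with $\sqrt{M_i}$ in place of $M_i$, since in this section $M_i$ denotes the squared norm) gives $K(t,Tx)\le M_0\,K(\mu t,x)$, and so
$$\|Tx\|_h^{\,2}\le M_0\int_{[0,\infty]}\left(1+t^{-1}\right)K(\mu t,x)\,d\vro(t).$$

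The key observation is that the right-hand integral can be identified with $M_0\langle h(\mu A)x,x\rangle_0$. Indeed, using $K(s,x)=\langle sA(1+sA)^{-1}x,x\rangle_0$ from Lemma \ref{kcalc} and the spectral measure $E$ of $A$, Fubini yields
$$\int_{[0,\infty]}\left(1+t^{-1}\right)K(\mu t,x)\,d\vro(t)=\int_{\sigma(A)}\left[\int_{[0,\infty]}\frac{(1+t)\mu\lambda}{1+t\mu\lambda}\,d\vro(t)\right]dE_{x,x}(\lambda)=\langle h(\mu A)x,x\rangle_0,$$
since the inner integral is just $h(\mu\lambda)$ by \eqref{Pick}.

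The definition $\typeH(\mu)=\sup_{s>0}h(s\mu)/h(s)$ directly supplies the scalar inequality $h(\mu\lambda)\le \typeH(\mu)\,h(\lambda)$ for every $\lambda\in\sigma(A)\setminus\{0\}$; lifting this through the functional calculus of $A$ gives the operator inequality $h(\mu A)\le \typeH(\mu)\,h(A)$. Combining this with the previous display yields $\|Tx\|_h^{\,2}\le M_0\,\typeH(M_1/M_0)\,\|x\|_h^{\,2}$, which is exactly the assertion that $h$ is of type $\typeH$. There is no real obstacle here: the proof is essentially an unwrapping of definitions, and the only care needed is bookkeeping the squared-versus-unsquared norm convention between \eqref{contr} and \eqref{Htype}, and verifying that the measure-theoretic identification of the transformed integral as $h(\mu A)$ is legitimate (which follows from positivity of the integrand and Fubini).
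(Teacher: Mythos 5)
Your proof is correct and follows essentially the same route as the paper: express $\|\cdot\|_h$ via the $K$-functional integral, apply the contraction inequality \eqref{contr} (with the squared-norm bookkeeping), identify the resulting integral with $\langle h(MA)x,x\rangle_0$ through Lemma \ref{kcalc} and the spectral resolution, and then use the scalar bound $h(M\lambda)\le \typeH(M)\,h(\lambda)$ to pass to the operator inequality. The paper's argument is exactly this, phrased slightly more compactly.
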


\begin{proof} Let $T\in \bo(\overline{\calH})$ be a
non-zero operator; put $M_j=\left\|\,T\,\right\|_{\, \bo\left(\calH_j\right)}^{\,2}$
and $M=M_1/M_0$. We then have (by Lemma \ref{kcalc})
\begin{align*}\|\,Tx\,\|_h^{\,2}&=\int_{[0,\infty]}\left(1+t^{-1}\right)
K\left(t,Tx\right)\, d\vro(t)\\
&\le M_0\int_{[0,\infty]}\left(1+t^{-1}\right)
K\left(tM,x\right)\, d\vro(t)\\
&=M_0\int_{[0,\infty]}\left\langle \frac {(1+t)MA}
{1+tMA}x,x\right\rangle_0\, d\vro(t)\\
&=M_0\left\langle h\left(MA\right)x,x\right\rangle_0.
\end{align*}

Letting $E$ be the spectral resolution of $A$, we have
$$\left\langle h\left(MA\right)x,x\right\rangle_0=
\int_0^\infty h\left(M\lambda\right)\, d\left\langle E_\lambda x,x\right\rangle_0.$$
Since $h\left(M\lambda\right)/h(\lambda)\le \typeH\left(M\right)$, we conclude that
$$\left\|\, Tx\,\right\|_h^{\,2}\le M_0\typeH\left(M\right)
\int_0^\infty h\left(\lambda\right)\, d\left\langle E_\lambda x,x\right\rangle_0=M_0\typeH\left(M\right)
\left\|\,x\,\right\|_h^{\,2},$$
which finishes the proof.
\end{proof}

Given a function $h$ of a positive variable, we define a new function
$\tilde{h}$ by
$$\tilde{h}(s,t)=s\, h\left(t/s\right).$$
The following reiteration theorem is due to Fan.

\begin{thm} (\cite{F}) Let $h, h_0,h_1\in P'$, and
$\fii(\lambda)=\tilde{h}\left(h_0(\lambda),h_1(\lambda)\right)$.
Then $\overline{\calH}_\fii=(\overline{\calH}_{h_0},\overline{\calH}_{h_1}
)_h$ with equal norms. Moreover, $\overline{\calH}_\fii$ is
an exact interpolation space relative to $\overline{\calH}$.
\end{thm}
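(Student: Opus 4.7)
The plan is to identify the operator associated with the intermediate couple $\overline{\calK}:=(\overline{\calH}_{h_0},\overline{\calH}_{h_1})$, evaluate the Pick-function norm $\|\cdot\|_h$ on this couple via the functional calculus of $A$, and then chain together two exact interpolation steps.

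First I would check that $\overline{\calK}$ is a regular Hilbert couple. Since $h_0,h_1\in P'$ are (in particular) quasi-concave and bounded above and below by positive multiples of $\lambda/(1+\lambda)$ and $1+\lambda$, the intersection $\Delta(\overline{\calH})$ sits continuously and densely in both $\calH_{h_0}$ and $\calH_{h_1}$, so $\overline{\calK}$ is regular. Next, let $A'$ denote the positive injective operator in $\calH_{h_0}$ associated with $\overline{\calK}$, characterized by $\|x\|_{h_1}^2=\langle A'x,x\rangle_{h_0}$. Since by definition $\langle A'x,x\rangle_{h_0}=\langle h_0(A)A'x,x\rangle_0$ while $\|x\|_{h_1}^2=\langle h_1(A)x,x\rangle_0$ for $x\in\Delta$, one reads off
\[A'=\frac{h_1(A)}{h_0(A)}=(h_1/h_0)(A)\]
in the functional calculus of $A$ on $\calH_0$. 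In particular $A'$ commutes with every spectral projection of $A$, and for any Borel function $g$ on $\sigma(A')\subset\R_+$ we have the identity $g(A')=(g\circ(h_1/h_0))(A)$ as operators on $\Sigma$.

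The core computation is now immediate: by \eqref{fvro}–\eqref{Pick} applied to the couple $\overline{\calK}$,
\[\|x\|_{\overline{\calK}_h}^{\,2}=\langle h(A')x,x\rangle_{h_0}=\langle h_0(A)\,h\!\left((h_1/h_0)(A)\right)x,x\rangle_0=\langle \varphi(A)x,x\rangle_0=\|x\|_\varphi^{\,2},\]
where the second equality uses the functional-calculus identity above together with $\langle\cdot,\cdot\rangle_{h_0}=\langle h_0(A)\cdot,\cdot\rangle_0$, and the third equality is exactly the definition $\varphi(\lambda)=\tilde h(h_0(\lambda),h_1(\lambda))=h_0(\lambda)\,h(h_1(\lambda)/h_0(\lambda))$. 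This proves the asserted equality of norms $\overline{\calH}_\varphi=\overline{\calK}_h$.

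For the exact interpolation statement, I would invoke transitivity. Since $h\in P'$, the discussion in \textsection\ref{mex} shows that $\overline{\calK}_h$ is exact interpolation with respect to $\overline{\calK}$. Meanwhile, each $\overline{\calH}_{h_i}$ is of the form $\calH_\vro$ for some positive Radon measure on $[0,\infty]$, hence exact interpolation with respect to $\overline{\calH}$ by the same \textsection\ref{mex}. Consequently, any $T\in\bo_1(\overline{\calH})$ is a contraction on $\calH_{h_0}$ and on $\calH_{h_1}$, i.e.\ $T\in\bo_1(\overline{\calK})$, and therefore a contraction on $\overline{\calK}_h=\overline{\calH}_\varphi$. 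This is exactly the exact interpolation property of $\overline{\calH}_\varphi$ over $\overline{\calH}$.

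The only delicate point I foresee is the functional-calculus identity $h(A')=(h\circ(h_1/h_0))(A)$: $A'$ is self-adjoint on the Hilbert space $\calH_{h_0}$, not on $\calH_0$, so one needs to verify that its spectral projections coincide (via the canonical dense inclusion $\Delta\hookrightarrow\calH_{h_0}$) with those of $(h_1/h_0)(A)$. This is a standard consequence of Donoghue's lemma (Lemma \ref{donogh}) applied to $A'$ viewed as a positive operator commuting with the spectral resolution of $A$, and causes no real difficulty once carefully set up.
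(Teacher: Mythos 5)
Your proposal is correct and takes essentially the same route as the paper: both identify the operator of the intermediate couple $(\overline{\calH}_{h_0},\overline{\calH}_{h_1})$ as $A'=h_0(A)^{-1}h_1(A)$ and then push the $P'$-representation of $h$ through the functional calculus of $A$. The only difference is cosmetic: the paper writes out $K\bigl(t,x;\overline{\calH'}\bigr)$ via Lemma \ref{kcalc} and then integrates against $(1+t^{-1})\,d\vro(t)$, whereas you invoke the abstract form \eqref{fvro}--\eqref{Pick} directly to get $\|x\|_{\overline{\calK}_h}^2=\langle h(A')x,x\rangle_{h_0}$ and bypass the intermediate $K$-functional display; the transitivity argument for the exact interpolation claim is likewise the natural one. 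One small quibble: the consistency of the two functional calculi (for $A'$ in $\calH_{h_0}$ versus for $(h_1/h_0)(A)$ in $\calH_0$) is not really an application of Donoghue's lemma; it follows simply because the spectral projections of $(h_1/h_0)(A)$ are themselves functions of $A$, hence commute with $h_0(A)$ and are therefore orthogonal in both inner products, which is all that is needed.
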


\begin{proof} Let $\overline{\calH'}$ denote the couple $(\overline{\calH}_{h_0},\overline{\calH}_{h_1}
)$. The corresponding operator $A'$ then obeys
$$\|\,x\,\|_{\overline{\calH}_{h_1}}=\|\, (A')^{1/2}x\,\|_{\calH_0'}=
\|\, \fii_0(A)^{1/2}(A')^{1/2}x\,\|_0,\quad x\in
\Delta(\,\overline{\calH'}\,).$$
On the other hand, $\|\,x\,\|_{\overline{\calH}_{h_1}}=\left\|\, \fii_1(A)^{1/2}x\,\right\|_0$, so
$$(A')^{1/2}x=\fii_0(A)^{-1/2}\fii_1(A)^{1/2}x,\quad x\in
\Delta\left(\,\overline{\calH'}\,\right).$$
We have shown that $A'=\fii_0(A)^{-1}\fii_1(A)$, whence (by Lemma \ref{kcalc})
\begin{equation}\label{kca2}\begin{split}K\left(t,x;\overline{\calH'}\right)&=
\left\langle \frac {t\fii_0(A)^{-1}\fii_1(A)}
{1+t\fii_0(A)^{-1}\fii_1(A)}x,x\right\rangle_{\calH_0'}\\
&=\left\langle \frac {t\fii_1(A)}
{1+t\fii_0(A)^{-1}\fii_1(A)}x,x\right\rangle_{\calH_0'}.\\
\end{split}
\end{equation}

Now let the function $h\in P'$ be given by
$$h(\lambda)=\int_{[0,\infty]}\frac {(1+t)\lambda} {1+t\lambda}\,
d\vro(t),$$
and note that the function $\fii=\tilde{h}\left(h_0,h_1\right)$
is given by
$$\fii(\lambda)=\int_{[0,\infty]}\frac {(1+t)h_1(\lambda)}
{1+th_1(\lambda)/h_0(\lambda)}\,d\vro(t).$$
Combining with \eqref{kca2}, we find that
\begin{align*}\|\,x\,\|_{\overline{\calH'}_h}^{\,2}&=
\int_{[0,\infty]}\left(1+t^{-1}\right)K\left(t,x;\overline{\calH'}\right)
\, d\vro(t)\\
&=\int_0^\infty\left[ \int_{[0,\infty]}\frac {(1+t)h_1(\lambda)}
{1+th_1(\lambda)/h_0(\lambda)}\, d\vro(t)\right]\,
d\left\langle E_\lambda x,x\right\rangle_0=\|\,x\,\|_{\overline{\calH}_\fii}^{\,2}.\end{align*}
This finishes the proof of the theorem.
\end{proof}

 Combining with Donoghue's theorem \ref{dthm}, one obtains the following, purely function-theoretic corollary. Curiously, we are not aware of a proof which does not use interpolation theory.

\begin{cor} (\cite{F}) Suppose that $h\in P'$ and that $h_0,h_1\in P'|F$,
where $F$ is some closed subset of $\R_+$.
Then the function $\fii=\tilde{h}(h_0,h_1)$ is also of class $P'|F$.
\end{cor}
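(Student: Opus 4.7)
The plan is to reduce the corollary to the finite-dimensional case already covered by Theorem \ref{dono1}, using the reiteration theorem just proved and the sub-finite-set machinery of Lemma \ref{flem}.

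First I would pass to global $P'$-functions. By the very definition of $P'|F$ there exist $\hat{h}_0,\hat{h}_1\in P'$ defined on all of $\R_+$ with $\hat{h}_i|_F=h_i$. Form
\[
\hat{\fii}(\lambda)=\tilde{h}(\hat{h}_0(\lambda),\hat{h}_1(\lambda))=\hat{h}_0(\lambda)\,h\!\left(\hat{h}_1(\lambda)/\hat{h}_0(\lambda)\right),
\]
so $\hat{\fii}|_F=\fii$. Since $h,\hat{h}_0,\hat{h}_1$ are continuous on $\R_+$, so is $\hat{\fii}$, and hence $\fii$ is continuous on $F$.

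Next I would reduce to finite $F$. If $F$ is finite the reduction is trivial; if $F$ is infinite, by Lemma \ref{flem}(iii) it is enough to show that $\fii|_\Lambda\in P'|\Lambda$ for every finite $\Lambda\subset F$. So fix such $\Lambda=\{\lambda_1,\ldots,\lambda_n\}$ and work with the weighted couple $\overline{\ell_2^n}(\lambda)$, whose associated operator $A=\diag(\lambda_i)$ has simple spectrum equal to $\Lambda$. The reiteration theorem, applied to the triple $h,\hat{h}_0,\hat{h}_1\in P'$, yields that
\[
\overline{\calH}_{\hat{\fii}}=\bigl(\overline{\calH}_{\hat{h}_0},\overline{\calH}_{\hat{h}_1}\bigr)_h
\]
is an exact interpolation space relative to $\overline{\calH}=\overline{\ell_2^n}(\lambda)$. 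Theorem \ref{dono1} then furnishes a function $h^*\in P'$ on $\R_+$ such that the operator $B$ corresponding to $\overline{\calH}_{\hat{\fii}}$ equals $h^*(A)$; but by construction $B=\hat{\fii}(A)$, so comparing eigenvalues on the simple spectrum $\Lambda$ gives $h^*(\lambda_i)=\hat{\fii}(\lambda_i)=\fii(\lambda_i)$ for each $i$. This says exactly that $\fii|_\Lambda\in P'|\Lambda$, finishing the argument.

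The genuinely nontrivial input is the reiteration theorem itself (already in hand), so there is no real obstacle beyond orchestration. The one point to watch is that the $P'$-extensions $\hat{h}_0,\hat{h}_1$ are not canonical, so a priori $\hat{\fii}$ depends on the chosen extensions; but the output we care about is just $\hat{\fii}|_F$, which equals $\tilde{h}(h_0,h_1)=\fii$ by construction and is thus extension-independent. Continuity of $\fii$ on $F$ (inherited from that of the $P'$-functions involved) is what legitimizes the passage from finite subsets back to all of $F$ via Lemma \ref{flem}(iii).
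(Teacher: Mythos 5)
Your proof is correct and rests on the same two pillars the paper intends: the reiteration theorem and the Donoghue-type characterization of exact quadratic interpolation. The only difference from the paper's route is organizational. The paper's phrasing ``Combining with Donoghue's theorem \ref{dthm}'' indicates a one-step argument: extend $h_0,h_1$ to $\hat{h}_0,\hat{h}_1\in P'$, pick a regular Hilbert couple whose operator $A$ has $\sigma(A)=F$ (such a couple exists for any closed $F\subset\R_+$), invoke the reiteration theorem to get that $\overline{\calH}_{\hat{\fii}}$ is exact interpolation, and then apply Theorem \ref{dthm} directly to conclude $\hat{\fii}|_F\in P'|F$. You instead go through the finite-subset reduction of Lemma \ref{flem}(iii) and the finite-dimensional characterization Theorem \ref{dono1}, applying the reiteration theorem to each $\overline{\ell_2^n}(\lambda)$ separately. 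This is valid --- and every step you use is in the paper --- but it in effect re-runs the reduction that is already packaged inside the proof of Theorem \ref{dthm}. The only point worth preserving from your more granular route is the remark that $\hat{\fii}|_F$ is independent of the chosen extensions $\hat{h}_0,\hat{h}_1$, which is indeed what makes the argument well-posed; that observation applies equally to the shorter version.
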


\subsection{Donoghue's representation} Let $\overline{\calH}$ be a regular Hilbert couple. In Donoghue's
setting, the principal object is the space $\Delta=\calH_0\cap\calH_1$ normed
by $\|\,x\,\|_\Delta^{\,2}=\|\,x\,\|_0^{\,2}+\|\,x\,\|_1^{\,2}$.
In the following, all involutions are understood to be taken with respect to the norm of $\Delta$.

We express the norms in the spaces $\calH_i$ as
$$\|\,x\,\|_0^{\,2}=\left\langle Hx,x\right\rangle_\Delta\qquad \text{and}\qquad \|\,x\,\|_1^{\,2}=\left\langle (1-H)x,x\right\rangle_\Delta,$$
where $H$ is a bounded positive operator on $\Delta$, $0\le H\le 1$. The regularity of $\overline{\calH}$
means that neither $0$, nor $1$ is an eigenvalue of $H$.

To an arbitrary quadratic intermediate space $\calH_*$ there corresponds a bounded positive injective operator
$K$ on $\Delta$ such that
$$\|\,x\,\|_*^{\,2}=\left\langle Kx,x\right\rangle_\Delta.$$
It is then easy to see that $\calH_*$ is exact interpolation if and only if, for bounded
operators $T$ on $\Delta$, the conditions $T^*HT\le H$ and $T^*(1-H)T\le 1-H$ imply
$T^*KT\le K$.
It is straightforward to check that the relations between $H$, $K$ and the operators $A$, $B$
used in the previous sections are:
\begin{equation}\label{rels}H=\frac 1 {1+A}\quad ,\quad A=\frac {1-H} H\quad ,\quad
K=\frac B {1+A}\quad ,\quad B=\frac K H.\end{equation}
(It follows from the proof of Lemma \ref{donogh} that $H$ and $K$ commute.)

By Theorem \ref{dthm} we know that $\calH_*$ is an exact interpolation space if and only if $B=h(A)$
for some $h\in P'$. By \eqref{rels}, this is equivalent to that $K=k(H)$ where
$$k(H)=\frac {h(A)} {1+A}=H\,h\left(\frac {1-H}H\right).$$
In its turn, this means that
\begin{align*}k(\lambda)&=\lambda\int_{[0,\infty]}\frac {(1+t)(1-\lambda)/\lambda}
{1+t(1-\lambda)/\lambda}\, d\vro(t)\\
&=\int_{[0,\infty]}\frac {(1+t)\lambda(1-\lambda)} {\lambda+t(1-\lambda)}\,d\vro(t),
\quad \lambda\in\sigma(H),
\end{align*}
where $\vro$ is a suitable Radon measure. Applying the change of variables $s=1/(1+t)$ and defining
a positive Radon measure $\mu$ on $[0,1]$ by $d\mu(s)=d\vro(t)$, we arrive at the expression
\begin{equation}k(\lambda)=\int_0^1\frac {\lambda(1-\lambda)}{(1-s)(1-\lambda)+s\lambda}
\, d\mu(s),\qquad \lambda\in \sigma(H),\end{equation}
which gives the representation exact quadratic interpolation spaces originally used by Donoghue in \cite{D1}.

\subsection{$J$-methods and the Foia\c{s}-Lions theorem} \label{jfl} We define the (quadratic) $J$-functional relative to a regular Hilbert couple $\overline{\calH}$ by
$$J(t,x)=J\left(t,x;\overline{\calH}\right)=\left\|\,x\,\right\|_0^{\, 2}+t\left\|\,x\,\right\|_1^{\,2},\qquad t>0,\, x\in \Delta(\,\overline{\calH}\,).$$
Note that $J(t,x)^{1/2}$ is an equivalent norm on $\Delta$ and that $J(1,x)=\left\|\,x\,\right\|_\Delta^{\,2}$.

Given a positive Radon measure $\nu$ on $[0,\infty]$, we define a Hilbert space $J_\nu(\,\overline{\calH}\,)$ as the set of all elements $x\in\Sigma(\,\overline{\calH}\,)$
such that there exists a measurable function $u:[0,\infty]\to \Delta$ such that
\begin{equation}\label{fu}x=\int_{[0,\infty]}u(t)\, d\nu(t)\quad\text{(convergence\, in\,}\Sigma\text{)}\end{equation}
and
\begin{equation}\label{fla}\int_{[0,\infty]}\frac  {J(t,u(t))} {1+t}\, d\nu(t)<\infty.\end{equation}
The norm in the space $J_\nu(\overline{\calH})$ is defined by
\begin{equation}\label{cho}\left\|\,x\,\right\|_{J_\nu}^{\,2}=\inf_u\int_{[0,\infty]}\frac {J(t,u(t))}{1+t}\, d\nu(t)\end{equation}
over all $u$ satisfying \eqref{fu} and \eqref{fla}.

The space \eqref{cho} was (with different notation) introduced by Foia\c{s} and Lions in the paper \cite{FL},
where it was shown that there is a unique minimizer $u(t)$ of the problem \eqref{cho}, namely
\begin{equation}u(t)=\fii_t(A)x\qquad \text{where}\qquad \fii_t(\lambda)=\frac{1+t}{1+t\lambda}\left(\int_{[0,\infty]}
\frac {1+s}{1+s\lambda}\,d\nu(s)\right)^{-1}.\end{equation}
Inserting this expression for $u$ into \eqref{cho}, one finds that
$$\left\|\,x\,\right\|_{J_\nu}^{\,2}=\left\langle h(A)x,x\right\rangle_0$$ where
\begin{equation}\label{foli}h(\lambda)^{-1}=\int_{[0,\infty]}\frac {1+t} {1+t\lambda}\, d\nu(t).
\end{equation}
It is easy to verify that the class of functions representable in the form \eqref{foli} for
some positive Radon measure $\nu$ coincides with
the class $P'$. We have thus arrived at the following result.

\begin{thm} \label{thm7.6} Every exact quadratic interpolation space $\calH_*$ can be represented isometrically in the form
$\calH_*=J_\nu(\overline{\calH})$
for some positive Radon measure $\nu$ on $[0,\infty]$. Conversely, any space of this form is an exact quadratic
interpolation space.
\end{thm}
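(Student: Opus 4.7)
The plan is to reduce Theorem \ref{thm7.6} to Donoghue's theorem (\ref{dthm}) by establishing, for each positive Radon measure $\nu$ on $[0,\infty]$, the isometric identification $J_\nu(\overline{\calH}) = \overline{\calH}_h$ for a specific $h \in P'$, and then verifying that the correspondence $\nu \leftrightarrow h$ given by \eqref{foli} is a bijection onto $P'$.

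First I would solve the minimisation problem defining \eqref{cho}. Treating \eqref{fu} as a linear constraint on $u$ and differentiating the quadratic functional $\int J(t,u(t))/(1+t)\, d\nu(t)$ against an arbitrary variation, the Euler-Lagrange condition reads $(1+tA)u(t)/(1+t) = c$ in the functional calculus of $A$, for some $c \in \Sigma$ independent of $t$; hence $u(t) = (1+t)(1+tA)^{-1}c$. Imposing the reconstruction constraint $\int u(t)\,d\nu(t) = x$ via the spectral calculus of $A$ forces $c = h(A)x$, with $h$ given by \eqref{foli}, so the optimizer is $u(t) = \phi_t(A)x$ as claimed. Substituting this back into the objective and using the spectral identity $(1+tA)\phi_t(A)^2 = (1+t)^2 h(A)^2/(1+tA)$, the integral collapses by the definition of $h$ to $\langle h(A)x,x\rangle_0$. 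Hence $\|x\|_{J_\nu}^2 = \langle h(A)x,x\rangle_0$, which is the squared norm of $\overline{\calH}_h$.

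Next I would verify that the assignment $\nu \mapsto h$ defined by \eqref{foli} is a bijection between positive Radon measures on $[0,\infty]$ and $P'$. The substitution $s = 1/t$ (tracking the boundary atoms according to $0 \leftrightarrow \infty$) rewrites \eqref{foli} as $1/h(\lambda) = \int_{[0,\infty]} (1+s)/(s+\lambda)\, d\mu(s)$ for the pushforward $\mu$; substituting $\lambda \mapsto 1/\lambda$ then yields
\[
h^*(\lambda) := \frac{1}{h(1/\lambda)} = \int_{[0,\infty]} \frac{(1+s)\lambda}{1+s\lambda}\, d\mu(s),
\]
which is exactly the $P'$-representation \eqref{Pick}. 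Theorem \ref{adprop} combined with Theorem \ref{dthm} gives that $h \in P'$ if and only if $h^* \in P'$, and the substitution $s = 1/t$ implements the involution $h \mapsto h^*$ at the level of measures, so the correspondence is bijective.

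Combining these two ingredients yields both directions of the theorem. Given $\calH_*$ an exact quadratic interpolation space, Theorem \ref{dthm} produces $h \in P'$ with $\|x\|_*^2 = \langle h(A)x,x\rangle_0$; the bijection supplies $\nu$; and the first step identifies $\calH_* = J_\nu(\overline{\calH})$ isometrically. Conversely, any $J_\nu(\overline{\calH})$ equals some $\overline{\calH}_h$ with $h \in P'$ and is therefore exact quadratic interpolation by \textsection\ref{mex}. The main obstacle I anticipate is the careful handling of the boundary of $[0,\infty]$: the change of variables $t = 1/s$ must transport atoms at $0$ and $\infty$ correctly, and one must check that the formal minimizer $u(t) = \phi_t(A)x$ takes values in $\Delta$ for $\nu$-a.e.\ $t$ and that the integral \eqref{fu} converges in $\Sigma$, so that the variational argument is genuine rather than purely formal.
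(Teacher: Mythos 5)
Your proposal is correct and follows essentially the same route as the paper: derive the optimizer $u(t)=\fii_t(A)x$, substitute it back to obtain $\|x\|_{J_\nu}^2=\langle h(A)x,x\rangle_0$ with $h$ as in \eqref{foli}, identify the resulting class of $h$ with $P'$, and conclude via Donoghue's theorem. You additionally carry out the Euler--Lagrange derivation of the minimizer (which the paper cites from Foia\c{s}--Lions) and spell out the change of variables $s=1/t$ together with Theorem \ref{adprop} to confirm that \eqref{foli} parametrizes exactly $P'$ (which the paper calls ``easy to verify''); both checks are sound.
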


In the original paper \cite{FL}, Foia\c{s} and Lions proved the less precise statement that each exact quadratic
interpolation method $F$ can be represented as $F=J_\nu$ for some positive Radon measure $\nu$.

\subsection{The relation between the $K$- and $J$-representations}
The assignment
$K_\vro=J_\nu$ gives rise to a non-trivial bijection $\vro\mapsto\nu$ of the set of
positive Radon measures on $[0,\infty]$. In this bijection, $\vro$ and $\nu$ are in correspondence if and only if
$$\int_{[0,\infty]}\frac {(1+t)\lambda}{1+t\lambda}\,d\vro(t)=\left(\int_{[0,\infty]}\frac {1+t} {1+t\lambda}\, d\nu(t)\right)^{-1}.$$
As an example, let us consider the geometric interpolation space (where $c_\theta=\pi/\sin(\pi\theta)$)
$$\left\|\,x\,\right\|_\theta^{\,2}=\left\langle A^{\,\theta} x,x\right\rangle_0=c_\theta\int_0^\infty t^{-\theta}K(t,x)\,\frac {dt} t.$$
The measure $\vro$ corresponding to this method is $d\vro_\theta(t)=\frac {c_\theta t^{-\theta}}{1+t}\, dt$.
On the other hand, it is easy to check that
$$\lambda^\theta=\left(\int_0^\infty \frac {1+t}{1+t\lambda}\, d\nu_\theta(t)\right)^{-1}\quad
\text{where}\quad d\nu_\theta(t)=\frac{c_\theta t^\theta}{1+t}\frac {dt} t.$$
We leave it to  the reader to check that
the norm in $\calH_\theta$ is the infimum of the expression
$$c_\theta\int_0^\infty {t^\theta}J(t,u(t))\,\frac {dt} t$$
over all
representations
$$x=\int_0^\infty
u(t)\,\frac {dt} t.$$
We have arrived at the Hilbert space version of Peetre's $J$-method of exponent $\theta$. The identity
$J_{\nu_\theta}=K_{\vro_\theta}$ can now be recognized as a sharp (isometric) Hilbert space version of
the equivalence theorem of Peetre, which says that the standard $K_\theta$ and $J_\theta$-methods give rise
to equivalent norms on the category of Banach couples (see \cite{BL}).

The problem of determining the pairs $\vro,\nu$ having the
property that the $K_\vro$ and $J_\nu$ methods give equivalent norms
was studied by Fan in \cite[Section 3]{F}.

\subsection{Other representations} As we have seen in the preceding subsections, using the space $\calH_0$ to express all involutions
and inner products leads to a description of the exact quadratic interpolation spaces
in terms of the class $P'$. If we instead use the space $\Delta$ as the basic object, we get Donoghue's representation for interpolation functions. Similarly, one can proceed from any fixed interpolation space
$\calH_*$ to obtain a different representation of interpolation functions.

\subsection{On interpolation methods of power $p$} \label{ointp} Fix a number $p$, $1<p<\infty$. We shall
write
$L_p=L_p\left(X,\calA,\mu\right)$ for the usual $L_p$-space associated with an arbitrary but fixed
($\sigma$-finite) measure $\mu$ on a measure
space $\left(X,\calA\right)$. Given a positive measurable weight function $w$, we write $L_p(w)$ for
the space normed by
$$\left\|\, f\,\right\|_{L_p(w)}^{\,p}=\int_X\left|f(x)\right|^{\,p}w(x)\, d\mu(x).$$
We shall write $\overline{L}_p(w)=\left(L_p,L_p(w)\right)$ for the corresponding weighted $L_p$ couple.
Note that the conditions imposed mean precisely that $\overline{L}_p(w)$ be separable and regular.

Let us say that an exact interpolation functor $F$ defined on the totality of separable, regular weighted
$L_p$-couples and taking values in the class of weighted $L_p$-spaces is \textit{of power $p$}.

Define, for a positive Radon measure $\vro$ on $[0,\infty]$,
an exact interpolation functor $F=K_\vro(p)$ by the definition
$$\left\|\, f\,\right\|_{F(\overline{L}_p(w))}^{\,p}:=\int_{[0,\infty]}
(1+t^{-\frac 1 {p-1}})^{\,p-1}K_p\left(t,f;\overline{L}_p(w)\right)\, d\vro(t).$$
We contend that $F$ is of power $p$.

Indeed, it is easy to verify that
$$K_p\left(t,f;\overline{L}_p(w)\right)=\int_X\left|f(x)\right|^{\,p}\frac {tw(x)} {(1+\left(tw(x)\right)^{\frac 1 {p-1}})^{\,p-1}}\, d\mu(x),$$
so Fubini's theorem gives that
$$\left\|\,f\,\right\|_{F(\overline{L}_p(w))}^{\,p}=\int_X\left|f(x)\right|^{\,p}h(w(x))\,
d\mu(x),$$
where
\begin{equation}\label{inp}h(\lambda)=\int_{[0,\infty]}\frac {(1+t^{\frac 1 {p-1}})^{\,p-1}\lambda}
{(1+\left(t\lambda\right)^{\frac 1 {p-1}})^{\,p-1}}\,\,d\vro(t),\qquad \lambda\in w(X).
\end{equation}
We have shown that $F(\overline{L}_p(w))=L_p(h(w))$, so $F$ is indeed of power $p$.

Let us denote by $\calK(p)$ the totality of positive functions $h$ on $\R_+$
representable in the form \eqref{inp} for some
positive Radon measure $\vro$ on $[0,\infty]$.

Further, let $\calI(p)$ denote
the class of all (exact) \textit{interpolation functions of power $p$}, i.e., those
positive functions $h$ on $\R_+$ having the property that
for each weighted $L_p$ couple $\overline{L}_p(w)$ and each bounded operator $T$
on $\overline{L}_p(w)$, it holds that $T$ is bounded on $L_p(h(w))$ and
$$\left\|\,T\,\right\|_{\,\bo\left(L_p(h(w))\right)}\le\left\|\,T\,\right\|_{\, \bo(\overline{L}_p(w))}.$$
The class $\calI(p)$ is in a sense the natural candidate for the class of "operator monotone functions
on $L_p$-spaces". The class $\calI(p)$ clearly forms a convex cone; it was shown by Peetre \cite{P1} that this
cone is contained in the
class of concave positive functions on $\R_+$ (with equality if $p=1$).

We have shown that $\calK(p)\subset\calI(p)$. By Theorem \ref{flthm}, we know that equality
holds when $p=2$. For other values of $p$ it does not seem to be known whether the class $\calK(p)$ exhausts
the class $\calI(p)$, but one can show that we would have $\calK(p)=\calI(p)$ provided that
each finite-dimensional $L_p$-couple $\overline{\ell_p^n}(\lambda)$ has the
$K_p$-property (or equivalently, the \Kpr, see \eqref{rem?}).
Naturally, the latter problem (about the $K_p$-property)
also seems to be open, but some comments on it are found in Remark \ref{remu}.

Let $\nu$ be a positive Radon measure on $[0,\infty]$.
In \cite{FL}, Foia\c{s} and Lions introduced a method, which we will denote by $F=J_\nu(p)$ in the following
way. Define the $J_p$-functional by
$$J_p\left(t,f;\overline{L}_p(\lambda)\right)=\left\|\,f\,\right\|_0^{\,p}+t\left\|\,f\,\right\|_1^{\,p},\quad f\in \Delta,\, t>0.$$
We then define an intermediate norm by
$$\left\|\,f\,\right\|_{F(\overline{L}_p(\lambda))}^{\,p}:=\inf \int_{[0,\infty]}
\left(1+t\right)^{\,-\frac 1 {p-1}}J_p\left(t,u(t);\overline{L}_p(\lambda)\right)\, d\nu(t),$$
where the infimum is taken over all representations
$$f=\int_{[0,\infty]} u(t)\, d\nu(t)$$
with convergence in $\Sigma$. It is straightforward to see that the method $F$ so defined
is exact; in \cite{FL} it is moreover shown that it is of power $p$. More precisely, it
is there proved that
$$\left\|\,f\,\right\|_{F(\overline{L}_p(\lambda))}^{\,p}=\int_X \left|f(x)\right|^{\,p}
h(w(x))\, d\mu(x),$$
where
\begin{equation}\label{folip} h(\lambda)^{\,-\frac 1 {p-1}}=\int_{[0,\infty]}
\frac {\left(1+t\right)^{\,\frac 1 {p-1}}} {\left(1+t\lambda\right)^{\frac 1 {p-1}}}\, d\nu(t),\qquad \lambda\in
w(X).
\end{equation}
Let us denote by $\calJ(p)$ the totality of functions $h$ representable in the form \eqref{folip}.
We thus have that $\calJ(p)\subset\calI(p)$. In view of our preceding remarks, we conclude that if all
weighted $L_p$-couples have the $K_p$ property, then necessarily $\calJ(p)\subset \calK(p)$.
Note that $\calJ(2)=\calK(2)$ by Theorem \ref{thm7.6}.

\section*{Appendix: The complex method is quadratic} \label{mccp}

Let $S=\{z\in\mathbf{C};\, 0\le\re z\le 1\}$. Fix a Hilbert couple
$\overline{\calH}$ and let $\calF$ be the set of functions $S\to\Sigma$ which are bounded
and continuous in $S$, analytic in the interior of $S$, and which maps the line
$j+i\mathbf{R}$ into $\calH_j$ for $j=0,1$. Fix $0<\theta<1$. The norm in the complex
interpolation space $C_\theta\left(\,\overline{\calH}\,\right)$ is defined by
$$\left\|\,x\,\right\|_{C_\theta\left(\,\overline{\calH}\,\right)}=\inf\left\{\left\|\,f\,\right\|_\calF;\, f(\theta)=x\right\}.\leqno{(*)}$$

Let $\calP$ denote the set of polynomials $f=\sum_1^N a_iz^i$ where $a_i\in\Delta$.
We endow $\calP$ with the inner product
$$\left\langle f,g\right\rangle_{M_\theta}=\sum_{j=0,1}\int_\R\left\langle f(j+it),g(j+it)\right\rangle_jP_j(\theta,t)\, dt,$$
where $\{P_0,P_1\}$ is the Poisson kernel for $S$,
$$P_j(\theta,t)=\frac {e^{-\pi t}\sin\theta\pi}{\sin^2\theta\pi+(\cos\theta\pi-(-1)^je^{-\pi t})^2}.$$
Let $M_\theta$ be the completion of $\calP$ with this inner product. It is easy to see that the
elements of $M_\theta$ are analytic in the interior of $S$, and that
evaluation map $f\mapsto f(\theta)$ is continuous on $M_\theta$. Let $N_\theta$ be the kernel
of this functional and define a Hilbert space $\calH_\theta$ by
$$\calH_\theta=M_\theta/N_\theta.$$
We denote the norm in $\calH_\theta$ by $\|\cdot\|_\theta$.

\begin{propA1}
$C_\theta\left(\,\overline{\calH}\,\right)=\calH_\theta$ with equality of norms.
\end{propA1}

\begin{proof} Let $f\in\calF$. By the Calderón lemma in \cite[Lemma 4.3.2]{BL}, we have the estimate
$$\log\left\|\,f(\theta)\,\right\|_{C_\theta(\,\overline{\calH}\,)}\le\sum_{j=0,1}\int_\R\log\|f(j+it)\|_jP_j(\theta,t)\, dt.$$
Applying Jensen's inequality, this gives that
$$\left\|\,f(\theta)\,\right\|_{C_\theta(\,\overline{\calH}\,)}\le (\sum_{j=0,1}\int_\R
\left\|\,f(j+it)\,\right\|_j^2P_j(\theta,t)\, dt)^{1/2}=\left\|\,f\,\right\|_{M_\theta}.$$
Hence $\calH_\theta\subset C_\theta(\,\overline{\calH}\,)$ and $\left\|\cdot\right\|_{C_\theta(\,\overline{\calH}\,)}\le
\|\cdot\|_\theta$. On the other hand, for $f\in\calP$ one has the estimates
$$\left\|\,f(\theta)\,\right\|_\theta\le \left\|\,f\,\right\|_{M_\theta}\le\sup\{\left\|\,f(j+it)\,\right\|_j;\, t\in \mathbf{R},\, j=0,1\}=
\left\|\,f\,\right\|_\calF,$$
whence $C_\theta(\,\overline{\calH}\,)\subset \calH_\theta$ and $\left\|\cdot\right\|_{C_\theta(\,\overline{\calH}\,)}\ge
\left\|\cdot\right\|_\theta$.
\end{proof}

It is well known that the method $C_\theta$ is of exponent $\theta$
(see e.g. \cite{BL}).
We have shown that $C_\theta$ is an exact quadratic interpolation method
of exponent $\theta$.

\subsection*{Complex interpolation with derivatives} In \cite[pp. 421-422]{F}, Fan considers the more general complex interpolation method $C_{\theta(n)}$
for the $n$:th derivative. This means that in (*), one consider representations $x=\frac 1 {n!}f^{(n)}(\theta)$ where $f\in\calF$; the complex
method $C_\theta$ is thus the special case $C_{\theta(0)}$.
It is shown in \cite{F} that, for $n\ge 1$, the $C_{\theta(n)}$-method is represented, up to equivalence of norms, by the quasi-power function
$h(\lambda)=\lambda^{\,\theta}/(1+\frac {\theta(1-\theta)}n\left|\,\log \lambda\,\right|)^{\,n}$.
The complex method with derivatives was introduced by Schechter \cite{S}; for more details
on that method, we refer to the list of references in \cite{F}.

.


\begin{thebibliography}{999}
\bibitem{AMY} Agler, J., McCarthy, J. E., Young, N., \textit{Operator monotone functions and L\"owner functions
of several variables}, Ann. Math. \textbf{176} (2012), 1783--1826.
%\bibitem{A1} Ameur, Y., \textit{Interpolation of Hilbert spaces}, U.U.D.M. Thesis \textbf{20} (Uppsala, 2001).
\bibitem{A2} Ameur, Y., \textit{The Calderón problem for Hilbert couples}, Ark. Mat. \textbf{41} (2003), 203--231.
\bibitem{A3} Ameur, Y., \textit{A new proof of Donoghue's interpolation theorem}, Journal of Function Spaces and Applications \textbf{3} (2004), 253--265.
%\bibitem{A4} Ameur, Y., \textit{Interpolation functions of several matrix variables}, Journal of Inequalities in Pure and Applied Mathematics
%    \textbf{4} (2003), Art. 88.
\bibitem{A5} Ameur, Y., \textit{A note on a theorem of Sparr}, Math. Scand. \textbf{94} (2004), 155--160.
\bibitem{AC} Ameur, Y., Cwikel, M., \textit{On the $K$-divisibility constant
for some special, finite-dimensional Banach couples}, J. Math. Anal. Appl.
\textbf{360} (2009), 130--155.
\bibitem{AD} Aronszajn, N., Donoghue, W., \textit{On exponential
representations of functions}, J. Analyse Math. \textbf{5}
(1956-57), 321--388.
%\bibitem{AG} Aronszajn, N., Gagliardo, E., \textit{Interpolation spaces
%and interpolation methods}, Ann. Mat. Pura. Appl. \textbf{68} (1965),
%51--118.
\bibitem{BL} Bergh, J., L\"ofstr\"om, J., \textit{Interpolation spaces,
an introduction.} Springer 1976.
\bibitem{BK} Brudnyi, Y. A., Krugljak, N. Y., \textit{Interpolation functors and interpolation spaces},
North Holland 1991.
\bibitem{C} Calderón, A. P., \textit{Intermediate spaces and interpolation, the complex method,}
Studia Math. \textbf{24} (1964), 113--190.
\bibitem{C1} Calderón, A. P., \textit{Spaces between $L_1$ and $L_\infty$
and the theorem of Marcinkiewicz}, Studia Math. \textbf{26} (1966),
273--299.
\bibitem{Cw} Cwikel, M., \textit{Monotonicity properties
of interpolation spaces}, Ark. Mat. \textbf{14} (1976), 213--236.
\bibitem{D0} Donoghue, W., \textit{Monotone matrix functions and analytic continuation}, Springer 1974.
\bibitem{D1} Donoghue, W., \textit{The interpolation of quadratic norms}, Acta Math. \textbf{118} (1967),
251--270.
\bibitem{D2} Donoghue, W., \textit{The theorems of Loewner and Pick}, Israel J. Math. \textbf{4} (1966),
153--170.
\bibitem{F} Fan, M., \textit{Quadratic interpolation and some operator inequalities}, Journal of
Mathematical Inequalities \textbf{5} (2011), 413--427.
\bibitem{FOR} Foia\c{s}, C., Ong, S. C., Rosenthal, P., \textit{An interpolation theorem and operator ranges},
Integral Equations Operator Theory \textbf{10} (1987), 802--811.
\bibitem{FL} Foia\c{s}, C., Lions, J. L., \textit{Sur certains théorèmes d'interpolation}, Acta Sci. Math.
\textbf{22} (1961), 269--282.
\bibitem{Ha} Halmos, P. R., \textit{Quadratic interpolation}, J. Operator Theory \textbf{7} (1982),
303--305.
\bibitem{H0} Hansen, F., \textit{An operator inequality}, Math. Ann. \textbf{246} (1980), 249--250.
%\bibitem{H3} Hansen, F., \textit{Operator monotone functions of several variables},
%Math. Ineq. Appl. \textbf{6} (2003), 1--17.
\bibitem{H1} Hansen, F., \textit{Selfadjoint means and
operator monotone functions}, Math. Ann. \textbf{256}
(1981), 29--35.
%\bibitem{H} Hansen, F., \textit{The fast track to L\"owner's theorem},
%Linear Algebra and Its Applications \textbf{438} (2013), 4557-4571.
%\bibitem{KOO} Koosis, P., \textit{The logarithmic integral, v. 1}, Cambridge 1988.
\bibitem{He} Heinävaara, O., \textit{Local characterizations for the matrix monotonicity and convexity of fixed order}, Proc. Amer. math. Soc. \textbf{146} (2018), 3791-3799.
\bibitem{K} Korányi, A., \textit{On some classes of analytic functions
of several variables}, Trans. Amer. Math. Soc. \textbf{101} (1961),
520--554.
\bibitem{Kr} Kraus, F., \textit{\"{U}ber konvexe Matrixfunktionen},
Math. Z. \textbf{41} (1936), 18--42.
\bibitem{Li} Lions, J. L., \textit{Espaces intermédiaires entre espaces Hilbertiens et applications},
Bull. Math. de la Soc. Sci. Math. Phys. de la R. P. R. \textbf{2} (1958), 419--432.
\bibitem{LM} Lions, J. L., Magenes, E., \textit{Non-homogeneous boundary value problems and applications 1},
Springer 1972.
\bibitem{L} L\"owner, K., \textit{\"{U}ber monotone Matrixfunktionen},
Math. Z. \textbf{38} (1934), 177--216.
\bibitem{Mc} McCarthy, J. E., \textit{Geometric interpolation of Hilbert spaces}, Ark. Mat. \textbf{30} (1992), 321--330.
\bibitem{Mi} Mityagin, B., \textit{An interpolation theorem for modular spaces}, Mat. Sbornik \textbf{66} (1965), 473--482.
\bibitem{Mu} Murphy, G. J., \textit{$C^*$-algebras and operator theory}, Academic Press 1991.
%\bibitem{NFBK} Sz. Nagy, B., Foia\c{s}, C., Bercovici, H., Kérchy, L., \textit{Harmonic
%analysis of operators and Hilbert space}, Second ed., Springer 2010.
\bibitem{O} Ovchinnikov, V. I., \textit{The method of orbits in interpolation theory}, Math. Rep. \textbf{1} (1984), 349--515.
\bibitem{P1} Peetre, J.,
%\textit{On interpolation functions},
%Acta Szeged \textbf{27} (1966), 167--171; \textit{On interpolation functions II},
%Acta Szeged \textbf{29} (1968), 91--92;
\textit{On interpolation functions III},
Acta Szeged \textbf{30} (1969), 235--239.
\bibitem{P4} Peetre, J., \textit{Two new interpolation methods based on the duality map}, Acta Math.
\textbf{173} (1979), 73--91.
\bibitem{PW} Pusz, W., Woronowicz, S. L., \textit{Functional calculus for sesquilinear forms and the
purification map}, Rep. Math. Phys. \textbf{8} (1975), 159--170.
\bibitem{RR} Rosenblum, M., Rovnyak, J., \textit{Hardy classes and operator theory}, Dover 1997.
%\bibitem{Rud} Rudin, W., \textit{Functional Analysis}, McGraw--Hill 1973.
\bibitem{Sed} Sedaev, A., \textit{Description of interpolation spaces
for the pair $\left(L_p(a_0),L_p(a_1)\right)$}, Soviet Math. Dokl.
\textbf{14} (1973), 538--541.
\bibitem{SS} Sedaev, A., Semenov, E. M., \textit{On the possibility of describing interpolation spaces
in terms of the $K$-functional of Peetre}, Optimizacja \textbf{4} (1971), 98--114.
\bibitem{S} Shechter, M., \textit{Complex interpolation}, Compositio Math. \textbf{18} (1967), 117--147.
\bibitem{Sp} Sparr, G., \textit{A new proof of L\"owner's theorem},
Math. Scand \textbf{47} (1980), 266--274.
\bibitem{Sp2} Sparr, G., \textit{Interpolation of weighted $L^p$ spaces},
Studia Math. \textbf{62} (1978), 229--271.
\bibitem{U} Uhlmann, A., \textit{Relative entropy and the Wigner--Yanase--Dyson--Lieb concavity
in an interpolation theory}, Commun. Math. Phys. \textbf{54} (1977), 21--32.
\bibitem{V} Vasudeva, H., \textit{On monotone matrix functions of two
variables}, Trans. Amer. Math. Soc. \textbf{176} (1973), 305--318.
%\bibitem{D} Donoghue, W., \textit{Monotone matrix functions and analytic continuation}, Springer 1974.
%\bibitem{LM} Lions, J.-L., Magenes, E., \textit{Non-homogeneous boundary
%value problems and applications, vol. I.} Springer 1972.

\end{thebibliography}
\end{document}